\numberwithin{equation}{section} 
\definecolor{dblue}{HTML}{0455BF}
\definecolor{dgreen}{HTML}{02724A}
\definecolor{dgreen2}{HTML}{025951}
\definecolor{dred}{HTML}{D90404}
\definecolor{dviolet}{HTML}{42208C}
\definecolor{labelkey}{HTML}{025951}
\definecolor{refkey}{HTML}{025951}
\definecolor{orng}{HTML}{D35400}
\definecolor{pblue}{rgb}{0.1176,0.5647,1}
\definecolor{pgreen}{rgb}{0.1961,0.8039,0.1961}
\definecolor{pred}{rgb}{1.0,0.2706,0.0}
\definecolor{fred}{rgb}{0.98,0.40,0.93}
\definecolor{pyellow}{rgb}{1.0,0.6471,0.0}
\pgfplotsset{colormap={setting1}{color=(pblue) color=(pyellow) 
color=(pred)},
colormap={setting2}{rgb255=(255,0,0) rgb255=(255,255,0)}}
\setlist{itemsep=-2.0pt}
\g@addto@macro\th@plain{
\thm@headfont{\bfseries\sffamily}
\thm@notefont{}}
\g@addto@macro\th@definition{
\thm@headfont{\bfseries\sffamily}
\thm@notefont{}}
\g@addto@macro\th@remark{
\thm@headfont{\bfseries\sffamily}
\thm@notefont{}}
\theoremstyle{plain}
\newtheorem{theorem}{Theorem}[section]
\newtheorem{proposition}[theorem]{Proposition}
\newtheorem{corollary}[theorem]{Corollary}
\newtheorem{lemma}[theorem]{Lemma}
\theoremstyle{definition}
\newtheorem{definition}[theorem]{Definition}
\newtheorem{example}[theorem]{Example}
\newtheorem{assumption}[theorem]{Assumption}
\theoremstyle{remark}
\newtheorem{remark}[theorem]{Remark}
\DeclareMathDelimiterSet{\scal}[2]{
\selectdelim[l]<{#1}
\mathpunct{}\selectdelim[p]|
{#2}\selectdelim[r]>}
\newcommand{\menge}[2]{\bigl\{{#1}\mid{#2}\bigr\}} 
\DeclareMathDelimiterSet{\Menge}[2]{\selectdelim[l]\{
{#1}\selectdelim[m]|{#2}\selectdelim[r]\}}
\newcommand*\Cdot{{\mkern 1.6mu\cdot\mkern 1.6mu}}
\def\upintkern@{\mkern-7mu\mathchoice{\mkern-3.5mu}{}{}{}}
\def\upintdots@{\mathchoice{\mkern-4mu\@cdots\mkern-4mu}%
{{\cdotp}\mkern1.5mu{\cdotp}\mkern1.5mu{\cdotp}}%
{{\cdotp}\mkern1mu{\cdotp}\mkern1mu{\cdotp}}%
{{\cdotp}\mkern1mu{\cdotp}\mkern1mu{\cdotp}}}
\DeclareFontFamily{OMX}{mdbch}{}
\DeclareFontShape{OMX}{mdbch}{m}{n}{ <->s * [0.8] mdbchr7v }{}
\DeclareFontShape{OMX}{mdbch}{b}{n}{ <->s * [0.8] mdbchb7v }{}
\DeclareFontShape{OMX}{mdbch}{bx}{n}{<->ssub * mdbch/b/n}{}
\DeclareSymbolFont{uplargesymbols}{OMX}{mdbch}{m}{n}
\DeclareMathSymbol{\upintop}{\mathop}{uplargesymbols}{82}
\DeclareMathSymbol{\upointop}{\mathop}{uplargesymbols}{"48}
\renewcommand{\int}{\DOTSI\upintop\ilimits@}
\renewcommand{\oint}{\DOTSI\upointop\ilimits@}
\newcommand{\qq}{\mathscr{Q}}
\newcommand{\RR}{\mathbb{R}}
\newcommand{\NN}{\mathbb{N}}
\newcommand{\HH}{\mathcal{H}}
\newcommand{\GG}{\mathcal{G}}
\newcommand{\KK}{\mathcal{K}}
\newcommand{\BL}{\ensuremath{\EuScript B}\,}
\newcommand{\BE}{\EuScript{B}}
\newcommand{\FF}{\EuScript{F}}
\newcommand{\GW}{\mathsf{G}_{\omega}}
\newcommand{\GK}{\mathsf{G}_{k}}
\newcommand{\GS}{\mathsf{G}}
\newcommand{\fw}{\mathsf{f}_{\omega}}
\newcommand{\gw}{\mathsf{g}_{\omega}}
\newcommand{\fs}{\mathsf{f}}
\newcommand{\gs}{\mathsf{g}}
\newcommand{\LW}{\mathsf{L}_{\omega}}
\newcommand{\LS}{\mathsf{L}}
\newcommand{\GH}{\mathfrak{H}}
\newcommand{\HS}{\mathsf{H}}
\newcommand{\pinf}{{+}\infty}
\newcommand{\minf}{{-}\infty}
\newcommand{\zeroun}{\intv[o]{0}{1}}
\newcommand{\rzeroun}{\intv[l]{0}{1}}
\newcommand{\RXX}{\intv{\minf}{\pinf}}
\newcommand{\RX}{\intv[l]0{\minf}{\pinf}}
\newcommand{\RP}{\intv[r]0{0}{\pinf}}
\newcommand{\RPP}{\intv[o]0{0}{\pinf}}
\newcommand{\emp}{\varnothing}
\newcommand{\infconv}{\mathbin{\mbox{\small$\square$}}}
\newcommand{\pushfwd}{\ensuremath{\mathbin{\raisebox{-0.20ex}
{\mbox{\LARGE$\triangleright$}}}}}
\newcommand{\epushfwd}{\ensuremath{\mathbin{\raisebox{-0.20ex}%
{\mbox{\LARGE$\trianglerightdot$}}}}}
\DeclareMathOperator{\Argmin}{Argmin}
\newcommand{\Id}{\mathrm{Id}}
\newcommand{\IdHS}{\mathsf{Id}_{\mathsf{H}}}
\newcommand{\moyo}[2]{\leftindex[I]^{#2}{#1}}
\DeclareMathOperator{\ran}{ran}
\DeclareMathOperator{\dom}{dom}
\DeclareMathOperator{\cam}{cam}
\DeclareMathOperator{\inte}{int}
\DeclareMathOperator{\reli}{ri}
\DeclareMathOperator{\sri}{sri}
\DeclareMathOperator{\rec}{rec}
\DeclareMathOperator{\prox}{prox}
\DeclareMathOperator{\proj}{proj}
\newcommand{\lev}[1]%
{{\ensuremath{{{{\operatorname{lev}}}_{\leq #1}}\,}}}
\DeclareMathOperator{\spc}{\overline{span}}
\newcommand{\PP}{\mathsf{P}}
\newcommand{\smalldot}{\ensuremath{\raisebox{0.12ex}{\scalebox{0.8}
{$\cdot$}}}}\def\trianglerightdot{{\mkern+1mu\smalldot\mkern-4.5mu 
\triangleright}}
\DeclareMathOperator{\PE}{\overset{\diamond}{\mathsf{E}}}
\DeclareFontFamily{U}{mathb}{}
\DeclareFontShape{U}{mathb}{m}{n}{<-5.5> mathb5 <5.5-6.5> mathb6 
<6.5-7.5> mathb7 <7.5-8.5> mathb8 <8.5-9.5> mathb9 <9.5-11> mathb10
<11-> mathb12}{}
\DeclareSymbolFont{mathb}{U}{mathb}{m}{n}
\DeclareMathSymbol{\blackdiamond}{\mathbin}{mathb}{"0C}
\DeclareMathOperator{\pav}{{\mathsf{pav}}}
\DeclareMathOperator{\pcm}%
{\overset{\mbox{\,\large$\triangleright$}}{\mathsf{M}}}
\DeclareMathOperator{\epcm}%
{\overset{\mbox{\,\large$\trianglerightdot$}}{\mathsf{M}}}
\DeclareMathOperator{\pex}%
{\overset{\mbox{\,\large$\triangleright$}}{\mathsf{E}}}
\DeclareMathOperator{\epex}%
{\overset{\mbox{\,\large$\trianglerightdot$}}{\mathsf{E}}}
\DeclareMathOperator{\ccm}%
{\overset{\mbox{$\circ$}}{\mathsf{M}}}
\renewcommand{\leq}{\leqslant}
\renewcommand{\geq}{\geqslant}
\newcommand{\mae}{\text{\normalfont$\mu$-a.e.}}
\newcommand{\proxc}[1]{\mathbin%
{\ensuremath{\overset{#1}{\diamond}}}}
\newcommand{\proxcc}[1]{\mathbin%
{\ensuremath{\overset{#1}{\blackdiamond}}}}
\newcommand{\Rm}[1]{\overset{\mathord{\diamond}}{\mathsf{M}}_{#1}}
\newcommand{\Rcm}[1]{\overset{\mathord{\blackdiamond}}{\mathsf{M}}_{#1}}
\renewenvironment{abstract}{%
\vspace*{-0.50cm}
\small
\quotation%
\noindent%
{\normalfont\bfseries\sffamily
\nobreak\abstractname\ }%
}{%
\endquotation%
\medskip
}
\renewcommand{\abstractname}{Abstract.}
\newcommand\mscsname{MSC classification.}
\newenvironment{keywords}
{\renewcommand\abstractname{\keywordsname}\begin{abstract}}
{\end{abstract}}
\newenvironment{MSC}
{\renewcommand\abstractname{\mscsname}\begin{abstract}}
{\end{abstract}}
\newcommand{\email}[1]{\href{mailto:#1}{\nolinkurl{#1}}}
\renewcommand*\Affilfont{\normalfont\normalsize}
\newcommand\affilcr{\protect\\ \protect\Affilfont}
\renewcommand\AB@affilsepx{\protect\\[0.5em]}
\author[1]{Patrick L. Combettes}
\affil[1]{North Carolina State University
\affilcr
Department of Mathematics
\affilcr
Raleigh, NC 27695, USA
\affilcr
\email{plc@math.ncsu.edu}
}
\author[2]{Diego J. Cornejo}
\affil[2]{North Carolina State University
\affilcr
Department of Mathematics
\affilcr
Raleigh, NC 27695, USA
\affilcr
\email{djcornej@ncsu.edu}
}
\begin{document}

\title{%
Variational Analysis of Proximal Compositions and Integral 
Proximal Mixtures\thanks{%
Contact author: P. L. Combettes.
Email: \email{plc@math.ncsu.edu}.
Phone: +1 919 515 2671. This work was supported by the National
Science Foundation under grant CCF-2211123.
}}

\date{~}
\maketitle

\vspace{12mm}

\begin{abstract} 
This paper establishes various variational properties of
parametrized versions of two convexity-preserving constructs that
were recently introduced in the literature: the proximal
composition of a function and a linear operator, and the integral
proximal mixture of arbitrary families of functions and linear
operators. We study in particular convexity, Legendre conjugacy,
differentiability, Moreau envelopes, coercivity, minimizers,
recession functions, and perspective functions of these constructs,
as well as their asymptotic behavior as the parameter varies. The
special case of the proximal expectation of a family of functions
is also discussed. 
\end{abstract}

\begin{keywords}
Convex function,
integral proximal mixture,
proximal composition,
proximal expectation,
variational analysis.
\end{keywords}

\begin{MSC}
28B20,
46N10,
47N10,
49J53,
49N15.
\end{MSC}

\newpage

\section{Introduction}
\label{sec:1}

Throughout, $\HH$ is a real Hilbert space with power set $2^{\HH}$,
identity operator $\Id_{\HH}$, scalar product
$\scal{\Cdot}{\Cdot}_{\HH}$, associated norm $\norm{\Cdot}_{\HH}$,
and quadratic kernel $\qq_{\HH}=\norm{\Cdot}_{\HH}^2/2$. In
addition, $\GG$ is a real Hilbert space, the space of bounded
linear operators from $\HH$ to $\GG$ is denoted by $\BL(\HH,\GG)$,
and we set $\BL(\HH)=\BL(\HH,\HH)$. The Legendre conjugate of
$f\colon\HH\to\RXX$ is 
\begin{equation}
\label{e:jjm2}
f^*\colon\HH\to\RXX\colon x^*\mapsto\sup_{x\in\HH}
\brk1{\scal{x}{x^*}_{\HH}-f(x)},
\end{equation}
the Moreau envelope of index $\gamma\in\RPP$ of $f\colon\HH\to\RXX$
is
\begin{equation}
\label{e:jjm1}
\moyo{f}{\gamma}\colon\HH\to\RXX\colon x\mapsto
\inf_{y\in\HH}\brk2{f(y)+\dfrac{1}{\gamma}\qq_{\HH}(x-y)},
\end{equation}
and the adjoint of $L\in\BL(\HH,\GG)$ is denoted by $L^*$. 

In analysis, there are several ways to compose a function
$g\colon\GG\to\RXX$ and an operator $L\in\BL(\HH,\GG)$ in
order to construct a function from $\HH$ to $\RXX$. The most common
is the standard composition 
\begin{equation}
\label{e:1}
g\circ L\colon\HH\to\RXX\colon x\mapsto g(Lx). 
\end{equation}
Another instance is the infimal postcomposition of $g$ by $L^*$,
that is (see \cite[Section~12.5]{Livre1} and
\cite[Section~I.5]{Rock70}, and, for applications,
\cite{Beck14,Bric24,Xuef24}),
\begin{equation}
\label{e:2}
L^*\pushfwd g\colon\HH\to\RXX\colon x\mapsto
\inf_{\substack{y\in\GG\\ L^*y=x}}g(y).
\end{equation}
These two operations are dually related by the identities
$(L^*\pushfwd g)^*=g^*\circ L$ and, under certain qualification
conditions, $(g\circ L)^*=L^*\pushfwd g^*$ 
\cite[Corollary~15.28]{Livre1}. The focus of the present paper is
on the following alternative operations introduced in
\cite{Svva23}, where they were shown to manifest themselves in
various variational models.

\begin{definition}
\label{d:1}
Let $L\in\BL(\HH,\GG)$, $g\colon\GG\to\RXX$, and $\gamma\in\RPP$.
The \emph{proximal composition} of $g$ and $L$ with parameter
$\gamma$ is the function $L\proxc{\gamma}g\colon\HH\to\RXX$ given
by
\begin{equation}
L\proxc{\gamma}g=\brk2{\moyo{\brk1{g^*}}
{\frac{1}{\gamma}}\circ L}^*-\dfrac{1}{\gamma}\qq_{\HH},
\end{equation}
and the \emph{proximal cocomposition} of $g$ and $L$ with parameter
$\gamma$ is $L\proxcc{\gamma}g=(L\proxc{1/\gamma}g^*)^*$.
\end{definition}

In \cite{Svva23}, proximal compositions were studied only in the
case when $\gamma=1$ and few of their properties were explored. The
goal of this paper is to carry out an in-depth analysis of these
compositions, leading to results which are new even when
$\gamma=1$. We study in particular convexity, Legendre conjugacy,
differentiability, subdifferentiability, Moreau envelopes,
minimizers, recession functions, perspective functions,
as well as the preservation of properties such as coercivity,
supercoercivity, and Lipschitzianity. We also investigate the
behavior of $L\proxc{\gamma}g$ and $L\proxcc{\gamma}g$ as $\gamma$
varies. Another contribution of our work is to derive from these
results a systematic analysis of the notions of integral proximal
mixtures and comixtures. These operations, recently introduced in
\cite{Jota24}, combine arbitrary families of convex functions and
linear operators acting in different spaces in such a way that the
proximity operator of the mixture is explicitly computable in terms
of those of the individual functions. In turn, this analysis leads
to new results on the proximal expectation of a family of convex
functions.

The remainder of the paper is organized as follows. In
Section~\ref{sec:2}, we provide our notation and the necessary
mathematical background. In Section~\ref{sec:3}, we investigate
various variational properties of proximal compositions. Finally,
Section~\ref{sec:4} is devoted to applications to integral proximal
mixtures and proximal expectations.

\section{Notation and background}
\label{sec:2}

We first present our notation, which follows \cite{Livre1}
(see also the first paragraph of Section~\ref{sec:1}).

Let $L\in\BL(\HH,\GG)$. The range of $L$ is denoted by $\ran L$
and, if it is closed, the generalized inverse of $L$ is denoted by
$L^\dagger$. Further, $L$ is called an isometry if 
$L^*\circ L=\Id_{\HH}$ and a coisometry if $L\circ L^*=\Id_{\GG}$.
Let $f\colon\HH\to\RXX$. We set
\begin{equation}
\label{e:cam}
\begin{cases}
\cam f=\menge{h\colon\HH\to\RR}
{h~\text{is continuous, affine, and}~h\leq f}\\
\overline{f}=\sup\menge{h\colon\HH\to\RXX}
{h~\text{is lower semicontinuous and}~h\leq f}\\
\breve{f}=\sup\menge{h\colon\HH\to\RXX}
{h~\text{is lower semicontinuous, convex, and}~h\leq f}.
\end{cases}
\end{equation}
The infimal postcomposition of $f$ by $L\in\BL(\HH,\GG)$
(see \eqref{e:2}) 
is denoted by $L\epushfwd f$ if, for every $y\in L(\dom f)$,
there exists $x\in\HH$ such that $Lx=y$ and 
$(L\pushfwd f)(y)=f(x)\in\RX$. The function $f$ is proper if
$\dom f=\menge{x\in\HH}{f(x)<\pinf}\neq\emp$ and
$\minf\notin f(\HH)$. If $f$ is proper, its subdifferential is
\begin{equation}
\label{e:subdiff}
\partial f\colon\HH\to 2^{\HH}\colon x\mapsto\menge{x^*\in\HH}
{(\forall y\in\HH)\,\,\scal{y-x}{x^*}_{\HH}+f(x)\leq f(y)}
\end{equation}
and, if $f$ is also convex, its recession function at $x\in\HH$ is
\begin{equation}
(\rec f)(x)=\sup_{y\in\dom f}\brk1{f(x+y)-f(y)}.
\end{equation}
If $f$ and $g\colon\HH\to\RX$ are proper, their infimal convolution
is
\begin{equation}
\label{e:3}
f\infconv g\colon\HH\to\RXX\colon x\mapsto
\inf_{y\in\HH}\bigl(f(y)+g(x-y)\bigr).
\end{equation}
We denote by $\Gamma_0(\HH)$ the class of functions from $\HH$ to
$\RX$ which are proper, lower semicontinuous, and convex.
If $f\in\Gamma_0(\HH)$, its proximity operator is
\begin{equation}
\label{e:dprox}
\prox_{f}\colon\HH\to\HH\colon
x\mapsto\underset{y\in\HH}{\text{argmin}}\;
\brk1{f(y)+\qq_{\HH}(x-y)}.
\end{equation}
Let $C\subset\HH$. Then $\iota_C$ denotes the indicator function of
$C$ and $\sigma_C$ the support function of $C$. If $C$ is convex,
its normal cone is denoted by $N_C$ and its strong relative
interior is the set $\sri C$ of points $x\in C$ such that the
smallest cone containing $C-x$ is a closed vector subspace of
$\HH$. If $C$ is nonempty, closed, and convex, its projection
operator is denoted by $\proj_C$. Finally, the closed ball with
center $x\in\HH$ and radius $\rho\in\RPP$ is denoted by
$B(x;\rho)$. 

The following facts will be frequently used in the paper.

\begin{lemma}
\label{l:1}
Let $f$ and $g$ be functions from $\HH$ to $\RXX$. Then the
following hold:
\begin{enumerate}
\item
\label{l:1i}
$f^{**}\leq f$.
\item
\label{l:1ii}
$f\leq g\Rightarrow g^*\leq f^*$.
\item
\label{l:1iii}
$f^{***}=f^*$.
\item
\label{l:1v}
$f^*\equiv\pinf$ $\Leftrightarrow$ $\cam f=\emp$.
\item
\label{l:1iv}
$f^*\in\Gamma_0(\HH)$ $\Leftrightarrow$ 
$\mathrm{[}\,f$ is proper and 
$\cam f\neq\emp\,\mathrm{]}$.
\end{enumerate}
\end{lemma}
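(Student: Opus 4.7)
All five items are classical facts about Legendre conjugation that follow by direct manipulation of definition \eqref{e:jjm2}; no deep tools are required, and the only subtlety is the $\pm\infty$ bookkeeping in items (iv)--(v).

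For (i), the plan is to observe that for any $x,x^*\in\HH$, the definition of $f^*$ yields $f^*(x^*)\geq\scal{x}{x^*}_{\HH}-f(x)$, equivalently $\scal{x}{x^*}_{\HH}-f^*(x^*)\leq f(x)$; taking the supremum over $x^*$ on the left gives $f^{**}(x)\leq f(x)$. For (ii), if $f\leq g$ then $\scal{x}{x^*}_{\HH}-g(x)\leq\scal{x}{x^*}_{\HH}-f(x)$ for every $x$, and passing to the supremum yields $g^*\leq f^*$. For (iii), applying (i) to $f^*$ in place of $f$ gives $f^{***}\leq f^*$, while applying (ii) to the inequality $f^{**}\leq f$ from (i) gives $f^*=(f)^*\leq(f^{**})^*=f^{***}$.

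For (iv), I would simply unfold the definition: $f^*(x^*)<\pinf$ is equivalent to the existence of some $\beta\in\RR$ with $\scal{y}{x^*}_{\HH}-\beta\leq f(y)$ for all $y\in\HH$, so $f^*\not\equiv\pinf$ is equivalent to the existence of a continuous affine minorant of $f$, i.e.\ to $\cam f\neq\emp$.

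For (v), the forward direction splits into two parts: $\cam f\neq\emp$ is (iv) applied to the proper function $f^*$, and properness of $f$ is obtained by ruling out $f\equiv\pinf$ (which would force $f^*\equiv\minf$) and the existence of a point where $f$ takes the value $\minf$ (which would force $f^*\equiv\pinf$), each contradicting the properness of $f^*$. For the converse, I note that $f^*$ is automatically convex and lower semicontinuous as a pointwise supremum of the continuous affine family $\{x^*\mapsto\scal{x}{x^*}_{\HH}-f(x)\}_{x\in\dom f}$; properness is then obtained by picking any $x_0\in\dom f$ (available because $f$ is proper) to get the uniform affine minorant $f^*(x^*)\geq\scal{x_0}{x^*}_{\HH}-f(x_0)>\minf$, while $\cam f\neq\emp$ together with (iv) rules out $f^*\equiv\pinf$. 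The only mild obstacle in this whole lemma is this end-of-spectrum bookkeeping in (v); everything else reduces to the monotonicity of the supremum.
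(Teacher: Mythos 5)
Your argument is correct, but it takes a genuinely different route from the paper, which proves nothing from scratch: items (i)--(iii) are quoted from \cite[Proposition~13.16]{Livre1}, the equivalence $f^*\equiv\pinf\Leftrightarrow\cam f=\emp$ from \cite[Proposition~13.12(ii)]{Livre1}, and the characterization of when $f^*\in\Gamma_0(\HH)$ is obtained by combining \cite[Proposition~13.10(ii)]{Livre1} with that equivalence. You instead rederive everything from \eqref{e:jjm2}: the Fenchel--Young rearrangement for $f^{**}\leq f$ (legitimate in $\RXX$ precisely because the pairing $\scal{x}{x^*}_{\HH}$ is real-valued, so the rearrangement survives the infinite cases), monotonicity of suprema for the order reversal, the usual two-inequality argument for $f^{***}=f^*$, a direct unfolding of the definition for the $\cam f$ criterion, and, for the last item, lower semicontinuity and convexity of $f^*$ as a supremum of continuous affine functions indexed by $\dom f$, the affine minorant $\scal{x_0}{\cdot}_{\HH}-f(x_0)$, and the previous item; this last step in fact mirrors the paper's ``combine Proposition~13.10(ii) with the preceding item,'' only with the cited result reproved by hand. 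What you gain is a self-contained proof showing exactly where properness and the existence of affine minorants enter; what the citations buy is brevity, this being textbook material. One wording slip: ``$\cam f\neq\emp$ is (iv) applied to the proper function $f^*$'' should say that (iv) is applied to $f$, the point being that properness of $f^*$ forces $f^*\not\equiv\pinf$; as written it suggests substituting $f^*$ for $f$ in (iv), which would yield a statement about $\cam f^*$ rather than $\cam f$. This is presentational, not a gap.
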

\begin{proof}
\ref{l:1i}--\ref{l:1iii}: \cite[Proposition~13.16]{Livre1}.

\ref{l:1v}: \cite[Proposition~13.12(ii)]{Livre1}.

\ref{l:1iv}: Combine \cite[Proposition~13.10(ii)]{Livre1} and
\ref{l:1v}.
\end{proof}

\begin{lemma}
\textup{\cite[Propositions~13.10(ii) and 13.23(i)--(ii)]{Livre1}}
\label{l:2}
Let $f\colon\HH\to\RXX$ and let $\rho\in\RPP$. Then the following
hold:
\begin{enumerate}
\item
\label{l:2i}
$(\rho f)^*=\rho f^*(\cdot/\rho)$.
\item
\label{l:2ii}
$(\rho f(\cdot/\rho))^*=\rho f^*$.
\item
\label{l:2iii}
$(f(\rho\cdot))^*=f^*(\cdot/\rho)$.
\end{enumerate}
\end{lemma}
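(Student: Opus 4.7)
The plan is to prove each of the three identities by direct computation from the definition \eqref{e:jjm2} of the Legendre conjugate, relying only on the fact that multiplication by $\rho\in\RPP$ is an invertible bijection on $\HH$. No deeper convex-analytic machinery is needed.

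I would handle \ref{l:2iii} first, since it is a pure change of variable: writing $(f(\rho\,\cdot))^*(x^*)$ as $\sup_{x\in\HH}(\scal{x}{x^*}_\HH-f(\rho x))$ and substituting $y=\rho x$ turns the linear term into $\scal{y}{x^*/\rho}_\HH$, whose supremum over $y\in\HH$ equals $f^*(x^*/\rho)$.

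Next, for \ref{l:2i}, I would start from $(\rho f)^*(x^*)=\sup_{x\in\HH}(\scal{x}{x^*}_\HH-\rho f(x))$ and factor the positive scalar $\rho$ out of the bracket by rewriting $\scal{x}{x^*}_\HH=\rho\scal{x}{x^*/\rho}_\HH$. This immediately yields $\rho f^*(x^*/\rho)$. Identity \ref{l:2ii} then follows by chaining the previous two: apply \ref{l:2i} to the function $h=f(\cdot/\rho)$ to obtain $(\rho h)^*=\rho h^*(\cdot/\rho)$, and evaluate $h^*=(f(\cdot/\rho))^*$ via \ref{l:2iii} with $\rho$ replaced by $1/\rho$, which gives $h^*(y)=f^*(\rho y)$. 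Substituting then collapses the two dilations and leaves precisely $\rho f^*$.

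There is really no analytical obstacle here; the entire argument is bookkeeping about dilations, which is presumably why the authors simply quote the result from \cite{Livre1}. The only point requiring attention is tracking the direction in which each factor of $\rho$ migrates between the argument of $f^*$ and the scalar prefactor, and this is fixed unambiguously by the positivity of $\rho$ making the substitution $y=\rho x$ a bijection of $\HH$.
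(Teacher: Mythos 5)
Your computation is correct: each identity follows from the definition \eqref{e:jjm2} by the change of variables $y=\rho x$ (or by factoring the positive scalar $\rho$ out of the supremum), and your derivation of \ref{l:2ii} from \ref{l:2i} and \ref{l:2iii} is sound. The paper gives no proof at all, simply citing \cite[Propositions~13.10(ii) and 13.23(i)--(ii)]{Livre1}, and your elementary verification is exactly the standard argument behind those cited facts.
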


The next lemma follows easily from \eqref{e:jjm1}.

\begin{lemma}
\label{l:3}
Let $f\colon\HH\to\RXX$, $\gamma\in\RPP$, and $\rho\in\RPP$.
Then the following hold:
\begin{enumerate}
\item
\label{l:3i}
$\rho(\moyo{f}{\gamma})=\moyo{(\rho f)}{\frac{\gamma}{\rho}}$.
\item
\label{l:3ii}
$(\moyo{f}{\gamma})(\rho\cdot)
=\moyo{(f(\rho\cdot))}{\frac{\gamma}{\rho^2}}$.
\end{enumerate}
\end{lemma}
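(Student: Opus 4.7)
The plan is to prove both identities by direct computation from the definition \eqref{e:jjm1} of the Moreau envelope, exactly as the statement itself hints (``follows easily from \eqref{e:jjm1}''). No duality or other machinery is needed; the only tool is that one may pull a positive constant inside an infimum and perform a linear change of variable in the minimization variable.

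For \ref{l:3i}, I would fix $x\in\HH$ and start from
\begin{equation*}
\rho(\moyo{f}{\gamma})(x)=\rho\inf_{y\in\HH}
\brk2{f(y)+\dfrac{1}{\gamma}\qq_{\HH}(x-y)}
=\inf_{y\in\HH}\brk2{\rho f(y)+\dfrac{\rho}{\gamma}\qq_{\HH}(x-y)},
\end{equation*}
which is nothing but $\moyo{(\rho f)}{\gamma/\rho}(x)$ by \eqref{e:jjm1}. The only justification needed is that $\rho>0$, which allows one to move $\rho$ through the infimum without flipping it.

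For \ref{l:3ii}, I would again fix $x\in\HH$, write
\begin{equation*}
(\moyo{f}{\gamma})(\rho x)=\inf_{y\in\HH}\brk2{f(y)
+\dfrac{1}{\gamma}\qq_{\HH}(\rho x-y)},
\end{equation*}
and perform the bijective change of variable $y=\rho z$ in $\HH$ (permissible since $\rho\in\RPP$). Using $\qq_{\HH}(\rho x-\rho z)=\rho^2\qq_{\HH}(x-z)$ this becomes
\begin{equation*}
\inf_{z\in\HH}\brk2{f(\rho z)+\dfrac{\rho^2}{\gamma}\qq_{\HH}(x-z)}
=\inf_{z\in\HH}\brk2{\brk1{f(\rho\Cdot)}(z)
+\dfrac{1}{\gamma/\rho^2}\qq_{\HH}(x-z)},
\end{equation*}
which is $\moyo{(f(\rho\Cdot))}{\gamma/\rho^2}(x)$, as required.

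There is essentially no obstacle: both items are one-line manipulations of the defining infimum, with positivity of $\rho$ being the only hypothesis that matters. The most delicate cosmetic point is simply being careful about the placement of $\rho$ in the index of the envelope (dividing by $\rho$ in \ref{l:3i}, by $\rho^2$ in \ref{l:3ii}) so that the new quadratic-term coefficient matches $1/\gamma_{\text{new}}$.
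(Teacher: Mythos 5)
Your computations are correct: pulling the positive constant $\rho$ through the infimum for \ref{l:3i} and the change of variable $y=\rho z$ together with $\qq_{\HH}(\rho x-\rho z)=\rho^2\qq_{\HH}(x-z)$ for \ref{l:3ii} give exactly the stated identities. This is precisely the routine verification the paper has in mind when it omits the proof with the remark that the lemma ``follows easily from \eqref{e:jjm1}.''
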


\begin{lemma}
\label{l:8}
Let $f\in\Gamma_0(\HH)$ and $\gamma\in\RPP$. Then the following
hold:
\begin{enumerate}
\item
\label{l:8i--}
\textup{\cite[Theorem~9.20]{Livre1}}
$\cam f\neq\emp$.
\item
\label{l:8i-}
\textup{\cite[Corollary~13.38]{Livre1}}
$f^*\in\Gamma_0(\HH)$ and $f^{**}=f$.
\item
\label{l:8ii}
\textup{\cite[Corollary~16.30]{Livre1}}
$\partial f^*=(\partial f)^{-1}$.
\item
\label{l:8ii+}
\textup{\cite[Remark~14.4]{Livre1}}
$\moyo{f}{1}+\moyo{(f^{*})}{1}=\qq_{\HH}$ 
and $\prox_{f}+\prox_{f^*}=\Id_{\HH}$.
\item
\label{l:8vi}
\textup{\cite[Theorem~13.49]{Livre1}}
$\rec (f^*)=\sigma_{\dom f}$ and $\rec f=\sigma_{\dom f^*}$.
\item
\label{l:8iii}
\textup{\cite[Propositions~12.15 and 12.30]{Livre1}}
$\moyo{f}{\gamma}\colon\HH\to\RR$ is convex and Fr\'{e}chet
differentiable.
\item
\label{l:8v}
\textup{\cite[Proposition~12.30]{Livre1}}
$\nabla(\moyo{f}{\gamma})=(\Id_{\HH}-\prox_{\gamma f})/\gamma$.
\item
\label{l:8iv}
\textup{\cite[Proposition~14.1]{Livre1}}
$(f+\gamma\qq_{\HH})^*=\moyo{(f^*)}{\gamma}$.
\end{enumerate}
\end{lemma}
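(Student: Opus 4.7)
The statement is a compendium of eight classical properties of $\Gamma_0$-functions, and each item is explicitly attributed in the statement itself to a precise result in \cite{Livre1}. My plan is therefore to assemble the cited results and verify that the notational conventions of the present paper match those of \cite{Livre1}; no new mathematical content is required.

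I would proceed by reading off each item in order. Items \ref{l:8i--}--\ref{l:8ii} are foundational: \ref{l:8i--} is a Hahn--Banach separation argument establishing that any $f\in\Gamma_0(\HH)$ is bounded below by a continuous affine minorant, obtained by separating $\epi f$ from a point strictly below it; \ref{l:8i-} is the Fenchel--Moreau biconjugation theorem, whose short proof uses \ref{l:8i--} to show $f^{**}=f$ via a double-conjugate computation combined with Lemma~\ref{l:1}\ref{l:1iv}; and \ref{l:8ii} follows from the Fenchel--Young equality characterization of the subdifferential together with \ref{l:8i-}. Items \ref{l:8ii+}--\ref{l:8vi} are the Moreau decomposition and its consequences: the former is derived from expanding $\qq_{\HH}(x)$ inside the infimum in \eqref{e:jjm1} and dualizing, while the latter is obtained by computing the support functions of the recession cones of $\epi f$ and $\epi f^*$.

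Items \ref{l:8iii}--\ref{l:8iv} concern the Moreau envelope. For \ref{l:8iii}, convexity follows because $\moyo{f}{\gamma}=f\infconv(\qq_{\HH}/\gamma)$ is the infimal convolution of two convex functions (one of which is in $\Gamma_0(\HH)$ with full domain), and Fr\'echet differentiability is typically shown via the $(1/\gamma)$-strong convexity of the inner problem in \eqref{e:jjm1}, which yields a unique minimizer and a Lipschitz gradient. Item \ref{l:8v} is the gradient formula, an immediate consequence of the optimality condition characterizing $\prox_{\gamma f}$. Item \ref{l:8iv} follows from applying the infimal convolution duality $(f+h)^*=f^*\infconv h^*$ with $h=\gamma\qq_{\HH}$, together with Lemma~\ref{l:2}\ref{l:2i} to compute $h^*=\qq_{\HH}/\gamma$; equivalently, one may complete the square directly inside the Fenchel supremum defining $(f+\gamma\qq_{\HH})^*$.

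The only potential obstacle, if one did not simply cite \cite{Livre1}, would be the differentiability claim in \ref{l:8iii}, which requires quantitative control on the unique minimizer defining $\moyo{f}{\gamma}$. In the present context, however, all items are furnished with exact external references, so the only real task is a bookkeeping check that the \emph{index $\gamma$} convention for the Moreau envelope used in \eqref{e:jjm1} matches that of \cite{Livre1}, ensuring that the parameter $\gamma$ appears consistently in the gradient formula \ref{l:8v} and in the conjugation identity \ref{l:8iv}.
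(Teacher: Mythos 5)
Your proposal matches the paper exactly: the paper gives no proof of this lemma beyond the bracketed citations to \cite{Livre1} attached to each item, which is precisely your approach of assembling the cited results and checking notational consistency. The additional proof sketches you include are standard and correct, but they are not needed and not present in the paper.
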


\begin{lemma}
\label{l:5}
Let $f\colon\HH\to\RX$, $L\in\BL(\HH,\GG)$, and $\gamma\in\RPP$.
Then the following hold:
\begin{enumerate}
\item
\label{l:5i}
\textup{\cite[Proposition~13.24(iii)]{Livre1}}
$(\moyo{f}{\gamma})^*=f^*+\gamma\qq_{\HH}$.
\item
\label{l:5ii}
\textup{\cite[Proposition~13.24(iv)]{Livre1}}
$(L\pushfwd f)^*=f^*\circ L^*$.
\item
\label{l:5iii}
\textup{\cite[Corollary~15.28(i)]{Livre1}}
Suppose that $f\in\Gamma_0(\HH)$ and $0\in\sri(\dom f-\ran L^*)$.
Then $(f\circ L^*)^*=L\epushfwd f^*$.
\end{enumerate}
\end{lemma}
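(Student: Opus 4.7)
The plan is to treat the three parts separately, since each calls for a distinct ingredient.

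For \ref{l:5i}, the idea is to recognize the Moreau envelope as an infimal convolution: by \eqref{e:jjm1} we have $\moyo{f}{\gamma}=f\infconv(\qq_{\HH}/\gamma)$. Then I would invoke the classical identity $(f\infconv g)^*=f^*+g^*$ (which in turn follows from a direct swap of two suprema in the definition of the conjugate). It remains to compute $(\qq_{\HH}/\gamma)^*$, for which Lemma~\ref{l:2}\ref{l:2i} applied with $\rho=1/\gamma$ and $f=\qq_{\HH}$, combined with the self-conjugacy $\qq_{\HH}^{*}=\qq_{\HH}$, yields $\gamma\qq_{\HH}$. Putting the pieces together gives $(\moyo{f}{\gamma})^{*}=f^{*}+\gamma\qq_{\HH}$.

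For \ref{l:5ii}, I would compute directly from the definitions. Starting with
\begin{equation*}
(L\pushfwd f)^{*}(y^{*})=\sup_{y\in\GG}\brk1{\scal{y}{y^{*}}_{\GG}-\inf_{Lx=y}f(x)}
=\sup_{y\in\GG}\sup_{\substack{x\in\HH\\ Lx=y}}\brk1{\scal{y}{y^{*}}_{\GG}-f(x)},
\end{equation*}
collapsing the double supremum over $(y,x)$ with $y=Lx$ into a single supremum over $x\in\HH$, and then using $\scal{Lx}{y^{*}}_{\GG}=\scal{x}{L^{*}y^{*}}_{\HH}$, yields $f^{*}(L^{*}y^{*})$, as required.

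For \ref{l:5iii}, the strategy is to derive it by duality from \ref{l:5ii}. Applying \ref{l:5ii} to $f^{*}$ in place of $f$ gives $(L\pushfwd f^{*})^{*}=f^{**}\circ L^{*}=f\circ L^{*}$, where the last equality uses $f\in\Gamma_0(\HH)$ and Lemma~\ref{l:8}\ref{l:8i-}. Taking conjugates on both sides, $(f\circ L^{*})^{*}=(L\pushfwd f^{*})^{**}$. To identify the right-hand side with $L\epushfwd f^{*}$, two things must be checked: (a) $L\pushfwd f^{*}\in\Gamma_0(\GG)$, so that biconjugation recovers it; and (b) the infimum defining $(L\pushfwd f^{*})(y)$ is attained in $\RX$ for every $y\in L(\dom f^{*})$, so that $\pushfwd$ can be upgraded to $\epushfwd$. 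This is exactly where the qualification condition $0\in\sri(\dom f-\ran L^{*})$ comes in: it translates (after a duality shift) into a standard Attouch–Brezis-type constraint qualification for the infimal convolution/postcomposition at hand, guaranteeing both lower semicontinuity of $L\pushfwd f^{*}$ and attainment of the infimum.

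The main obstacle is part \ref{l:5iii}: parts \ref{l:5i} and \ref{l:5ii} are essentially symbolic manipulations, whereas \ref{l:5iii} requires genuine functional-analytic input to convert the algebraic identity $(f\circ L^{*})^{*}=(L\pushfwd f^{*})^{**}$ into the sharper conclusion with $L\epushfwd f^{*}$. I would expect to spend most of the proof verifying that the hypothesis $0\in\sri(\dom f-\ran L^{*})$ is strong enough to close the range, secure lower semicontinuity, and ensure attainment.
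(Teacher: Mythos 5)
The paper does not actually prove Lemma~\ref{l:5}: all three items are imported verbatim from \cite{Livre1} (Propositions~13.24(iii)--(iv) and Corollary~15.28(i)), so the only fair comparison is with those cited facts. Your arguments for \ref{l:5i} and \ref{l:5ii} are correct and complete: writing $\moyo{f}{\gamma}=f\infconv(\qq_{\HH}/\gamma)$, using $(f\infconv g)^*=f^*+g^*$ (a sup--inf swap that is unproblematic here since $f$ never takes the value $\minf$ and $\qq_{\HH}/\gamma$ is real-valued), and evaluating $(\qq_{\HH}/\gamma)^*=\gamma\qq_{\HH}$ via Lemma~\ref{l:2}\ref{l:2i} gives \ref{l:5i}; the collapse of the double supremum over pairs $(y,x)$ with $Lx=y$ into a single supremum over $x$, followed by adjointness, gives \ref{l:5ii}. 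These are in fact more self-contained than the paper, which merely cites.

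For \ref{l:5iii}, however, what you have is a correct reduction rather than a proof. The identity $(f\circ L^*)^*=(L\pushfwd f^*)^{**}$ follows, as you say, from \ref{l:5ii} applied to $f^*$ together with $f^{**}=f$ (Lemma~\ref{l:8}\ref{l:8i-}). But the whole analytic content of the statement is the remaining step: that $0\in\sri(\dom f-\ran L^*)$ forces $L\pushfwd f^*$ to be proper and lower semicontinuous (so that biconjugation returns it) \emph{and} forces the infimum defining it to be attained on $L(\dom f^*)$, which is what licenses writing $\epushfwd$ instead of $\pushfwd$. You assert that the hypothesis ``translates into a standard Attouch--Brezis-type constraint qualification'' guaranteeing all of this, but that assertion \emph{is} the cited result: [\cite{Livre1}, Corollary~15.28(i)] is proved there precisely by running the Attouch--Br\'ezis/Fenchel duality machinery, and nothing in your outline reproduces that argument (e.g., exhibiting, for each $y\in\GG$ with $(f\circ L^*)^*(y)<\pinf$, a minimizer $u$ with $Lu=y$ and $f^*(u)=(f\circ L^*)^*(y)$ via the closedness of the relevant cone). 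So either you cite Corollary~15.28(i) directly --- in which case your preliminary biconjugation step is superfluous and you are doing exactly what the paper does --- or you must supply the duality argument, which is currently missing. As a blind, self-contained proof, \ref{l:5iii} therefore has a genuine gap, though the route you sketch is the standard one and not wrong in direction.
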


\begin{lemma}
\label{l:6}
Let $f\in\Gamma_0(\HH)$, $g\in\Gamma_0(\HH)$, and 
$\gamma\in\RPP$ be such that 
$\moyo{f}{\gamma}=\moyo{g}{\gamma}$. Then $f=g$.
\end{lemma}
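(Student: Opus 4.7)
The plan is to reduce the equality of Moreau envelopes to the equality of Legendre conjugates, and then invoke the biconjugation theorem on $\Gamma_0(\HH)$.

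More precisely, my first step is to take the conjugate of both sides of the assumed identity $\moyo{f}{\gamma}=\moyo{g}{\gamma}$, which yields $(\moyo{f}{\gamma})^*=(\moyo{g}{\gamma})^*$. By Lemma~\ref{l:5}\ref{l:5i}, each side has a clean formula: $(\moyo{f}{\gamma})^*=f^*+\gamma\qq_{\HH}$ and similarly for $g$. This gives $f^*+\gamma\qq_{\HH}=g^*+\gamma\qq_{\HH}$, and subtracting the finite-valued term $\gamma\qq_{\HH}$ leaves $f^*=g^*$.

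Next, I would conjugate once more. Since $f,g\in\Gamma_0(\HH)$, Lemma~\ref{l:8}\ref{l:8i-} provides $f^{**}=f$ and $g^{**}=g$. Hence from $f^*=g^*$ we obtain $f=f^{**}=g^{**}=g$, which is the conclusion.

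There is no real obstacle here; the only point to check is that subtraction of $\gamma\qq_{\HH}$ on both sides is legitimate. Since $\qq_{\HH}$ is real-valued (in particular finite everywhere on $\HH$), the equality $f^*+\gamma\qq_{\HH}=g^*+\gamma\qq_{\HH}$ can be rearranged pointwise on $\HH$ without encountering $\pinf-\pinf$ ambiguities, giving $f^*=g^*$ unconditionally. Everything else is an application of results already recorded in Lemmas~\ref{l:5} and \ref{l:8}.
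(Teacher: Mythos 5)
Your proposal is correct and follows essentially the same route as the paper: conjugate the identity, use Lemma~\ref{l:5}\ref{l:5i} to get $f^*=g^*$ (the paper writes this directly as $f^*=(\moyo{f}{\gamma})^*-\gamma\qq_{\HH}=(\moyo{g}{\gamma})^*-\gamma\qq_{\HH}=g^*$, which is the same rearrangement you justify), and conclude by biconjugation via Lemma~\ref{l:8}\ref{l:8i-}. Your remark that subtracting the finite-valued term $\gamma\qq_{\HH}$ is harmless is a fair explicit justification of a step the paper leaves implicit.
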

\begin{proof}
By Lemma~\ref{l:5}\ref{l:5i},
$f^*=(\moyo{f}{\gamma})^*-\gamma\qq_{\HH}
=(\moyo{g}{\gamma})^*-\gamma\qq_{\HH}=g^*$. Therefore, we deduce
from Lemma~\ref{l:8}\ref{l:8i-} that $f=f^{**}=g^{**}=g$.
\end{proof}

\begin{lemma}
\label{l:7}
Let $L\in\BL(\HH,\GG)$ and set $\Phi=\qq_{\GG}-\qq_{\HH}\circ L^*$.
Then $\Phi$ is convex if and only if $\|L\|\leq 1$.
\end{lemma}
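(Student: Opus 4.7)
The plan is to treat $\Phi$ as a quadratic form and reduce convexity to the standard operator-norm characterization $\|L\|=\|L^*\|\leq 1$.

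First, I would compute the convexity defect explicitly. For every $y_1,y_2\in\GG$ and $\lambda\in\zeroun$, the parallelogram identity applied to $\qq_{\GG}$ and to $\qq_{\HH}\circ L^*$ yields
\begin{equation*}
\lambda\qq_{\GG}(y_1)+(1-\lambda)\qq_{\GG}(y_2)-\qq_{\GG}\brk1{\lambda y_1+(1-\lambda)y_2}
=\dfrac{\lambda(1-\lambda)}{2}\norm{y_1-y_2}_{\GG}^2,
\end{equation*}
and similarly, using linearity of $L^*$,
\begin{equation*}
\lambda(\qq_{\HH}\circ L^*)(y_1)+(1-\lambda)(\qq_{\HH}\circ L^*)(y_2)
-(\qq_{\HH}\circ L^*)\brk1{\lambda y_1+(1-\lambda)y_2}
=\dfrac{\lambda(1-\lambda)}{2}\norm{L^*(y_1-y_2)}_{\HH}^2.
\end{equation*}
Subtracting the second identity from the first gives
\begin{equation*}
\lambda\Phi(y_1)+(1-\lambda)\Phi(y_2)
-\Phi\brk1{\lambda y_1+(1-\lambda)y_2}
=\dfrac{\lambda(1-\lambda)}{2}\brk1{\norm{y_1-y_2}_{\GG}^2-\norm{L^*(y_1-y_2)}_{\HH}^2}.
\end{equation*}

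Next, I would read off the equivalence. Convexity of $\Phi$ is exactly the requirement that the left-hand side above be nonnegative for every $y_1,y_2\in\GG$ and $\lambda\in\zeroun$. Setting $z=y_1-y_2$ and taking $\lambda=1/2$, this is equivalent to $\norm{L^*z}_{\HH}\leq\norm{z}_{\GG}$ for every $z\in\GG$, i.e.\ $\norm{L^*}\leq 1$. Since $\norm{L}=\norm{L^*}$, this is $\norm{L}\leq 1$. Conversely, if $\norm{L}\leq 1$, the right-hand side is $\geq 0$ for all $y_1,y_2,\lambda$, so $\Phi$ is convex.

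There is really no obstacle here: the proof reduces to a one-line algebraic identity together with the definition of operator norm. The only minor subtlety is to remember to use both midpoint-convexity (which suffices because $\Phi$ is continuous) and the identification $\norm{L}=\norm{L^*}$ so as to state the conclusion in terms of $L$ rather than $L^*$.
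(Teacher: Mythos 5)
Your proof is correct, but it takes a different route from the paper. The paper exploits the fact that $\Phi$ is (Fr\'echet) differentiable on all of $\GG$ with $\nabla\Phi=\Id_{\GG}-L\circ L^*$ and invokes the characterization of convexity of a differentiable function through monotonicity of its gradient \cite[Proposition~17.7]{Livre1}: $\Phi$ is convex $\Leftrightarrow$ $\Id_{\GG}-L\circ L^*$ is monotone $\Leftrightarrow$ $\|L^*y\|_{\HH}^2\leq\|y\|_{\GG}^2$ for all $y$ $\Leftrightarrow$ $\|L\|\leq 1$. You instead compute the convexity defect of $\Phi$ in closed form via the parallelogram-type identity for the quadratic kernels, obtaining $\lambda\Phi(y_1)+(1-\lambda)\Phi(y_2)-\Phi(\lambda y_1+(1-\lambda)y_2)=\tfrac{\lambda(1-\lambda)}{2}\bigl(\|y_1-y_2\|_{\GG}^2-\|L^*(y_1-y_2)\|_{\HH}^2\bigr)$, from which the equivalence with $\|L^*\|=\|L\|\leq 1$ is immediate; your algebra checks out. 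Your approach is more elementary and self-contained (no appeal to differential criteria or cited results), while the paper's is shorter given the standard monotone-gradient lemma and fits the operator-theoretic flavor of the rest of the paper, where the operator $\Id_{\GG}-L\circ L^*$ reappears explicitly (e.g.\ in Propositions~\ref{p:9} and \ref{p:25}). One small remark: your closing comment about needing midpoint convexity plus continuity is superfluous, since your identity already holds for every $\lambda\in\zeroun$ and hence yields full convexity directly.
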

\begin{proof}
Since $\dom\Phi=\GG$ and $\nabla\Phi=\Id_{\GG}-L\circ L^*$, we
deduce from \cite[Proposition~17.7]{Livre1} that $\Phi$ is convex
$\Leftrightarrow$ $\Id_{\GG}-L\circ L^*$ is monotone 
$\Leftrightarrow\|L^*\cdot\|_{\HH}^2\leq\|\cdot\|_{\GG}^2
\Leftrightarrow\|L^*\|\leq 1\Leftrightarrow\|L\|\leq 1$.
\end{proof}

\begin{lemma}
\label{l:10}
\textup{\cite[Proposition~17.36(iii)]{Livre1}}
Let $A\in\BL(\HH)$ be monotone and self-adjoint. Suppose that
$\ran A$ is closed, set $q_A\colon\HH\to\RR\colon
x\mapsto\scal{x}{Ax}_{\HH}/2$, and define $q_{A^\dagger}$
likewise. Then $q_{A}^*=\iota_{\ran A}+q_{A^\dagger}$.
\end{lemma}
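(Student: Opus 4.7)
My plan is to compute $q_A^\ast(y)=\sup_{x\in\HH}\brk1{\scal{x}{y}_{\HH}-q_A(x)}$ directly from the definition of Legendre conjugate and to split into two cases according to whether $y\in\ran A$. The geometric input I would rely on is that, since $A$ is self-adjoint with closed range, the closed range theorem yields the orthogonal decomposition $\HH=\ran A\oplus\ker A$, and in particular $\ran A=(\ker A)^{\perp}$.

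For $y\notin\ran A$, I would pick $y_2\in\ker A$ with $\scal{y_2}{y}_{\HH}\neq 0$ (available thanks to the orthogonal decomposition) and test against $x=ty_2$ for $t\in\RR$. Since $Ay_2=0$, the quadratic term vanishes and the objective reduces to the linear function $t\scal{y_2}{y}_{\HH}$; letting $t$ tend to $\pm\infty$ with the appropriate sign yields $q_A^{\ast}(y)=\pinf$, which matches $(\iota_{\ran A}+q_{A^\dagger})(y)$.

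For $y\in\ran A$, I would set $x_0=A^\dagger y$, so that $Ax_0=y$ by the standard property of the Moore--Penrose inverse on $\ran A$. Writing $x=x_0+h$ with $h\in\HH$ arbitrary and expanding using self-adjointness of $A$, the cross term $\scal{h}{Ax_0}_{\HH}=\scal{h}{y}_{\HH}$ cancels the linear part, and the objective collapses to $\tfrac{1}{2}\scal{x_0}{y}_{\HH}-\tfrac{1}{2}\scal{h}{Ah}_{\HH}$. Monotonicity of $A$ gives $\scal{h}{Ah}_{\HH}\geq 0$, so the supremum is attained at $h=0$ and equals $\tfrac{1}{2}\scal{A^\dagger y}{y}_{\HH}=q_{A^\dagger}(y)$. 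No step presents a real obstacle: the argument is a direct quadratic calculation, and the only nontrivial ingredients are the orthogonal splitting $\HH=\ran A\oplus\ker A$ and the identity $AA^\dagger y=y$ for $y\in\ran A$.
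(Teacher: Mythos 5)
Your argument is correct. Note that the paper offers no proof of this lemma: it is stated with a citation to \cite[Proposition~17.36(iii)]{Livre1}, so there is no in-paper argument to compare against. Your computation is a sound, self-contained version of the standard one: self-adjointness plus closed range gives $\ran A=(\ker A)^{\perp}$, hence $\HH=\ran A\oplus\ker A$; if $y\notin\ran A$ you get linear blow-up along a direction $y_2\in\ker A$ with $\scal{y_2}{y}_{\HH}\neq 0$, so $q_A^*(y)=\pinf=\iota_{\ran A}(y)$; and if $y\in\ran A$, writing $x=A^\dagger y+h$ and using $AA^\dagger y=\proj_{\ran A}y=y$ together with self-adjointness cancels the cross terms, leaving $\tfrac12\scal{A^\dagger y}{y}_{\HH}-\tfrac12\scal{h}{Ah}_{\HH}$, whose supremum over $h$ is $q_{A^\dagger}(y)$ by monotonicity (and $\tfrac12\scal{A^\dagger y}{y}_{\HH}=q_{A^\dagger}(y)$ by symmetry of the real scalar product). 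All ingredients you invoke (boundedness of $A^\dagger$ and $AA^\dagger=\proj_{\ran A}$ under the closed-range hypothesis) are standard, so the proof stands on its own.
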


\section{Proximal compositions}
\label{sec:3}

\subsection{General properties}

We start with direct consequences of Definition~\ref{d:1}.

\begin{proposition}
\label{p:1}
Let $L\in\BL(\HH,\GG)$, $g\colon\GG\to\RXX$, $\gamma\in\RPP$, and
$\rho\in\RPP$. Then the following hold:
\begin{enumerate}
\item
\label{p:1ii}
Let $h\colon\GG\to\RXX$ be such that $g^{**}\leq h\leq g$. Then
$L\proxc{\gamma}h=L\proxc{\gamma}g$ and
$L\proxcc{\gamma}h=L\proxcc{\gamma}g$.
\item
\label{p:1iii}
$(L\proxc{\gamma}g)^*=L\proxcc{1/\gamma}g^*$.
\item
\label{p:1iv}
$(L\proxcc{\gamma}g)^*=(L\proxc{1/\gamma}g^*)^{**}$.
\item
\label{p:1v}
$(L\proxc{\gamma}g)^{**}=(L\proxcc{1/\gamma}g^*)^*$.
\item
\label{p:1vi}
$\rho(L\proxc{\gamma}g)=L\proxc{\gamma/\rho}(\rho g)$.
\item
\label{p:1vii}
$(L\proxc{\gamma}g)(\rho\cdot)
=L\proxc{\gamma/\rho^2}(g(\rho\cdot))$.
\item
\label{p:1viii}
$\rho(L\proxcc{\gamma}g)=L\proxcc{\gamma/\rho}(\rho g)$.
\item
\label{p:1ix}
$(L\proxcc{\gamma}g)(\rho\cdot)
=L\proxcc{\gamma/\rho^2}(g(\rho\cdot))$.
\end{enumerate}
\end{proposition}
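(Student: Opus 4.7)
The plan is to go through the eight parts in the order given, exploiting the defining identity $L\proxc{\gamma}g=(\moyo{(g^*)}{1/\gamma}\circ L)^*-\tfrac{1}{\gamma}\qq_{\HH}$ and the duality $L\proxcc{\gamma}g=(L\proxc{1/\gamma}g^*)^*$, so that parts (iii)--(v) and (viii)--(ix) reduce to parts (ii), (vi)--(vii) by taking conjugates.

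\textbf{Part (ii).} I would first observe that if $g^{**}\leq h\leq g$, then Lemma~\ref{l:1}\ref{l:1ii} together with Lemma~\ref{l:1}\ref{l:1iii} forces $g^*\leq h^*\leq g^{***}=g^*$, hence $h^*=g^*$. Plugging this equality into the definitions of $L\proxc{\gamma}g$ and $L\proxcc{\gamma}g$ gives the claim. The natural instance is $h=g^{**}$, which is used throughout what follows.

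\textbf{Parts (iii)--(v).} These are essentially formal. By definition $L\proxcc{1/\gamma}g^*=(L\proxc{\gamma}g^{**})^*$; since $(g^{**})^{**}=g^{**}\le g$, part (ii) applied to $h=g^{**}$ yields $L\proxc{\gamma}g^{**}=L\proxc{\gamma}g$, which gives (iii). Conjugating the definition of $L\proxcc{\gamma}g$ gives (iv), and conjugating (iii) gives (v).

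\textbf{Parts (vi)--(vii).} These are the main computational steps. I would rewrite the claim (vi) as
\[
\rho(\moyo{(g^*)}{1/\gamma}\circ L)^*
=(\moyo{((\rho g)^*)}{\rho/\gamma}\circ L)^*,
\]
after the $\qq_{\HH}$ terms cancel. For the right-hand side, I apply in sequence: Lemma~\ref{l:2}\ref{l:2i} to rewrite $(\rho g)^*=\rho g^*(\cdot/\rho)$; Lemma~\ref{l:3}\ref{l:3i} (in the form $\moyo{(\rho f)}{\beta}=\rho\,\moyo{f}{\rho\beta}$) to pull the factor $\rho$ out of the Moreau envelope; and Lemma~\ref{l:3}\ref{l:3ii} (in the form $\moyo{(f(\cdot/\rho))}{\beta}=(\moyo{f}{\beta/\rho^2})(\cdot/\rho)$) to rewrite the dilated envelope in terms of $\moyo{(g^*)}{1/\gamma}$. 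The composition with $L$ then gives $\rho\phi(\cdot/\rho)$ with $\phi=\moyo{(g^*)}{1/\gamma}\circ L$, and Lemma~\ref{l:2}\ref{l:2ii} collapses its conjugate to $\rho\phi^*$. Part (vii) follows by the same template, using Lemma~\ref{l:2}\ref{l:2iii} in place of Lemma~\ref{l:2}\ref{l:2i}.

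\textbf{Parts (viii)--(ix).} These are obtained by conjugating the scaling identities (vi)--(vii) and invoking Lemma~\ref{l:2}. For (viii), writing $F=L\proxc{1/\gamma}g^*$ and using Lemma~\ref{l:2}\ref{l:2ii} gives $\rho(L\proxcc{\gamma}g)=\rho F^*=(\rho F(\cdot/\rho))^*$; I then apply (vi) and (vii) in succession to $F(\cdot/\rho)$ and use Lemma~\ref{l:2}\ref{l:2i} to recognize $(\rho g)^*$, so the inner expression becomes $L\proxc{\rho/\gamma}(\rho g)^*$, whose conjugate is $L\proxcc{\gamma/\rho}(\rho g)$ by definition. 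For (ix), the analogous manipulation with Lemma~\ref{l:2}\ref{l:2iii} and (vii) yields the result. The only real obstacle is bookkeeping of the scaling parameters in the Moreau-envelope indices across Lemma~\ref{l:3}, which I would handle by always carrying the identities in the explicit substituted form indicated above.
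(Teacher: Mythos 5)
Your proposal is correct and follows essentially the same route as the paper's proof: part (ii) via $h^*=g^*$, parts (iii)--(v) by conjugating the definition together with (ii), parts (vi)--(vii) by the same chain of scaling identities from Lemmas~\ref{l:2} and \ref{l:3} (you merely run the equalities from right to left, and in (vii) the closing step uses Lemma~\ref{l:2}\ref{l:2iii} rather than \ref{l:2ii}, a cosmetic point), and parts (viii)--(ix) by conjugating after applying (vi)--(vii) to $L\proxc{1/\gamma}g^*$, exactly as in the paper.
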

\begin{proof}
\ref{p:1ii}: By Lemma~\ref{l:1}\ref{l:1ii}--\ref{l:1iii}, 
$g^*=g^{***}\geq h^*\geq g^*$. Therefore, $h^*=g^*$, and the claims
follow from Definition~\ref{d:1}.

\ref{p:1iii}: It follows from Definition~\ref{d:1} and \ref{p:1ii}
that $L\proxcc{1/\gamma}g^*=(L\proxc{\gamma}g^{**})^*
=(L\proxc{\gamma}g)^*$.

\ref{p:1iv}: An immediate consequence of Definition~\ref{d:1}.

\ref{p:1v}: This follows from \ref{p:1iii}. 

\ref{p:1vi}: Combining Lemmas~\ref{l:2}\ref{l:2ii},
\ref{l:3}\ref{l:3i}--\ref{l:3ii}, and
\ref{l:2}\ref{l:2i}, we obtain
\begin{equation}
\label{e:p3i}
\rho\brk2{\moyo{\brk1{g^*}}{\frac{1}{\gamma}}\circ L}^*
=\brk2{\rho\moyo{\brk1{g^*}}{\frac{1}{\gamma}}\circ
\brk1{L/\rho}}^*
=\brk2{\moyo{\brk1{\rho g^*(\cdot/\rho)}}
{\frac{\rho}{\gamma}}\circ L}^*
=\brk2{\moyo{\brk1{(\rho g)^*}}{\frac{\rho}{\gamma}}\circ L}^*.
\end{equation}
The assertion therefore follows from
Definition~\ref{d:1}.

\ref{p:1vii}: We deduce from Lemmas~\ref{l:2}\ref{l:2iii} and
\ref{l:3}\ref{l:3ii} that
\begin{equation}
\label{e:p3ii}
\brk2{\moyo{\brk1{g^*}}{\frac{1}{\gamma}}\circ L}^*\brk1{\rho\cdot}
=\brk2{\moyo{\brk1{g^*}}{\frac{1}{\gamma}}\circ
\brk1{L/\rho}}^*
=\brk2{\moyo{\brk1{g^*(\cdot/\rho)}}
{\frac{\rho^2}{\gamma}}\circ L}^*
=\brk3{\moyo{\brk2{\brk1{g(\rho\cdot)}^*}}{\frac{\rho^2}{\gamma}}
\circ L}^*.
\end{equation}
In view of Definition~\ref{d:1}, the assertion is established.

\ref{p:1viii}: We invoke Definition~\ref{d:1},
Lemma~\ref{l:2}\ref{l:2ii}, \ref{p:1vi}, \ref{p:1vii}, and
Lemma~\ref{l:2}\ref{l:2i} to get
\begin{equation}
\rho\brk1{L\proxcc{\gamma}g}=\rho\brk1{L\proxc{1/\gamma}g^*}^*
=\brk2{\rho\brk1{L\proxc{1/\gamma}g^*}\brk1{\cdot/\rho}}^*
=\brk2{L\proxc{\rho/\gamma}\brk1{\rho g}^*}^*
=L\proxcc{\gamma/\rho}\brk1{\rho g}.
\end{equation}

\ref{p:1ix}: By Definition~\ref{d:1}, Lemma~\ref{l:2}\ref{l:2iii},
and \ref{p:1vii}, we get
\begin{align}
\brk1{L\proxcc{\gamma}g}\brk1{\rho\cdot}
=\brk1{L\proxc{1/\gamma}g^*}^*\brk1{\rho\cdot}
=\brk2{\brk1{L\proxc{1/\gamma}g^*}\brk1{\cdot/\rho}}^*
=\brk2{L\proxc{\rho^2/\gamma}\brk1{g(\rho\cdot)}^*}^*
=L\proxcc{\gamma/\rho^2}\brk1{g(\rho\cdot)},
\end{align}
which completes the proof.
\end{proof}

\begin{proposition}
\label{p:4}
Let $L\in\BL(\HH,\GG)$, let $g\colon\GG\to\RX$ be a proper function
such that $\cam g\neq\emp$, let $\gamma\in\RPP$, and set
$\Phi=\qq_{\GG}-\qq_{\HH}\circ L^*$. Then the following hold:
\begin{enumerate}
\item
\label{p:4i}
$L\proxc{\gamma}g=L^*\epushfwd(g^{**}+\Phi/\gamma)$.
\item
\label{p:4ii}
$L\proxcc{\gamma}g=(g^*+\gamma\Phi)^*\circ L$.
\item
\label{p:4iii}
$\dom(L\proxc{\gamma}g)=L^*(\dom g^{**})$.
\item
\label{p:4iii+}
Suppose that one of the following are satisfied: 
\begin{enumerate}
\item
\label{p:4iii+a}
$0<\|L\|<1$.
\item
\label{p:4iii+b}
$\dom g^{**}=\GG$.
\end{enumerate}
Then $\dom(L\proxcc{\gamma}g)=\HH$.
\item
\label{p:4iv}
$L\proxcc{\gamma}g\geq\moyo{(g^{**})}{\gamma}\circ L$.
\end{enumerate}
\end{proposition}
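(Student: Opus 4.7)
The plan is to derive first the explicit expressions in \ref{p:4i} and \ref{p:4ii}, from which the remaining items follow readily. Throughout, since $g$ is proper with $\cam g\neq\emp$, Lemmas~\ref{l:1}\ref{l:1iv} and~\ref{l:8}\ref{l:8i-} give $g^*,g^{**}\in\Gamma_0(\GG)$, a fact we shall invoke without further mention.

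For \ref{p:4i}, I begin with Definition~\ref{d:1}. Lemma~\ref{l:8}\ref{l:8iii} places $\moyo{(g^*)}{1/\gamma}$ in $\Gamma_0(\GG)$ with full domain, so the qualification in Lemma~\ref{l:5}\ref{l:5iii} (applied after interchanging the roles of $\HH$ and $\GG$, so that the operator plays the role of $L^*\in\BL(\GG,\HH)$) is automatic; combining that lemma with Lemma~\ref{l:5}\ref{l:5i}, which identifies $(\moyo{(g^*)}{1/\gamma})^*=g^{**}+\qq_{\GG}/\gamma$, yields $L\proxc{\gamma}g=L^*\epushfwd(g^{**}+\qq_{\GG}/\gamma)-\qq_{\HH}/\gamma$. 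The last step is to absorb $-\qq_{\HH}/\gamma$ inside the infimum defining the exact postcomposition: on the constraint $\{y\in\GG:L^*y=x\}$ one has $\qq_{\HH}(x)=\qq_{\HH}(L^*y)$, so $\qq_{\GG}(y)/\gamma-\qq_{\HH}(L^*y)/\gamma=\Phi(y)/\gamma$, which is \ref{p:4i}. For \ref{p:4ii}, apply \ref{p:4i} to $g^*$ with parameter $1/\gamma$ (using $g^{***}=g^*$ from Lemma~\ref{l:1}\ref{l:1iii}) to get $L\proxc{1/\gamma}g^*=L^*\epushfwd(g^*+\gamma\Phi)$; then Definition~\ref{d:1} gives $L\proxcc{\gamma}g=(L\proxc{1/\gamma}g^*)^*$, and Lemma~\ref{l:5}\ref{l:5ii} (applied to the operator $L^*$) delivers $(L^*\pushfwd h)^*=h^*\circ L$, hence \ref{p:4ii}.

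Items \ref{p:4iii} and \ref{p:4iv} now read off directly. For \ref{p:4iii}, the domain of $L^*\epushfwd h$ is $L^*(\dom h)$ by construction, and $\dom(g^{**}+\Phi/\gamma)=\dom g^{**}$ since $\Phi$ is real-valued on $\GG$. For \ref{p:4iv}, the elementary pointwise bound $\Phi=\qq_{\GG}-\qq_{\HH}\circ L^*\leq\qq_{\GG}$ combined with Lemma~\ref{l:1}\ref{l:1ii} gives $(g^*+\gamma\Phi)^*\geq(g^*+\gamma\qq_{\GG})^*$, and Lemma~\ref{l:8}\ref{l:8iv} identifies the right-hand side as $\moyo{(g^{**})}{\gamma}$; composing with $L$ and invoking \ref{p:4ii} concludes.

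The main obstacle is \ref{p:4iii+}: by \ref{p:4ii}, it reduces to showing $(g^*+\gamma\Phi)^*(Lx)<\pinf$ for every $x\in\HH$. The uniform tool is the Fenchel inequality $(g^*+\gamma\Phi)^*(u)\leq g^{**}(v)+(\gamma\Phi)^*(u-v)$, valid for every $v\in\GG$. Under condition~\ref{p:4iii+a}, the self-adjoint operator $A=\gamma(\Id_{\GG}-L\circ L^*)$ is strongly positive (its spectrum is contained in $[\gamma(1-\|L\|^2),\gamma]$), hence invertible with closed range $\GG$, and Lemma~\ref{l:10} identifies $(\gamma\Phi)^*$ as a real-valued quadratic form; choosing any $v_0\in\dom g^{**}$ then bounds the Fenchel expression uniformly in $u$. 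Under condition~\ref{p:4iii+b}, the natural choice $v=u$ yields $(g^*+\gamma\Phi)^*(u)\leq g^{**}(u)+(\gamma\Phi)^*(0)$: in the accompanying convexity regime of Lemma~\ref{l:7} one has $\Phi\geq 0=\Phi(0)$, so $(\gamma\Phi)^*(0)=-\gamma\inf\Phi=0$, and the bound collapses to $g^{**}(u)<\pinf$ by hypothesis. The subtlest technical issue is the possible failure of the range-closure hypothesis of Lemma~\ref{l:10} at the boundary case $\|L\|=1$, which is precisely why case~\ref{p:4iii+b} benefits from the direct evaluation at $v=u$ rather than from a spectral identification of $(\gamma\Phi)^*$.
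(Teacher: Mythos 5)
Your proofs of \ref{p:4i}, \ref{p:4ii}, \ref{p:4iii}, and \ref{p:4iv} coincide with the paper's own argument: the same appeal to Lemma~\ref{l:5}\ref{l:5iii} (with the roles of $\HH$ and $\GG$ interchanged, the qualification being automatic because the Moreau envelope has full domain) combined with Lemma~\ref{l:5}\ref{l:5i} and the fiberwise absorption of $-\qq_{\HH}/\gamma$ for \ref{p:4i}; conjugation of \ref{p:4i} applied to $g^*$ via Lemma~\ref{l:5}\ref{l:5ii} for \ref{p:4ii}; the domain formula for infimal postcompositions for \ref{p:4iii}; and $\Phi\leq\qq_{\GG}$ together with Lemma~\ref{l:8}\ref{l:8iv} for \ref{p:4iv}. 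The only genuine divergence is in \ref{p:4iii+}. The paper first uses Lemma~\ref{l:7} to place $\Phi\in\Gamma_0(\GG)$, writes the exact identity $(g^*+\gamma\Phi)^*=g^{**}\infconv(\Phi^*/\gamma)$ via \cite[Proposition~15.2]{Livre1}, and in case \ref{p:4iii+a} bounds $\Phi^*\leq\qq_{\GG}/(1-\|L\|^2)$ from $(1-\|L\|^2)\qq_{\GG}\leq\Phi$; you instead use only the one-sided inequality $(g^*+\gamma\Phi)^*(u)\leq g^{**}(v)+(\gamma\Phi)^*(u-v)$, which requires no convexity of $\Phi$, and in case \ref{p:4iii+a} you identify $(\gamma\Phi)^*$ as an everywhere finite quadratic through Lemma~\ref{l:10} (the paper's elementary bound does the same job without generalized inverses); also, your phrase ``uniformly in $u$'' should read ``for every $u$'', since the bound depends on $u$, but finiteness at each $u$ is all that is needed. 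In case \ref{p:4iii+b}, your evaluation at $v=u$ needs $(\gamma\Phi)^*(0)=0$, i.e.\ $\Phi\geq 0$, i.e.\ $\|L\|\leq 1$, which is not among the stated hypotheses of \ref{p:4iii+b}; however, the paper's proof tacitly makes the same assumption (its appeal to Lemma~\ref{l:7} presupposes $\|L\|\leq 1$), and some such restriction is in fact unavoidable: for $\HH=\GG=\RR$, $L=2$, $g=\qq_{\GG}$, and $\gamma=1$ one gets $g^*+\gamma\Phi=-2\qq_{\GG}$, whose conjugate is identically $\pinf$, so that $\dom(L\proxcc{1}g)=\emp$. Your argument is therefore on the same footing as the paper's and is correct in the regime $0<\|L\|\leq 1$ in which the result is subsequently applied.
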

\begin{proof}
By Lemma~\ref{l:1}\ref{l:1iv}, $g^*\in\Gamma_0(\GG)$. Therefore,
Lemma~\ref{l:8}\ref{l:8iii} implies that
$\dom\moyo{(g^*)}{\frac{1}{\gamma}}=\GG$ and that
$\moyo{(g^*)}{\frac{1}{\gamma}}\in\Gamma_0(\GG)$.

\ref{p:4i}: Let $x\in\HH$. Because 
$\dom\moyo{(g^*)}{\frac{1}{\gamma}}-\ran L=\GG$, it follows from
Definition~\ref{d:1} and items \ref{l:5iii} and \ref{l:5i} in
Lemma~\ref{l:5} that
\begin{align}
\brk1{L\proxc{\gamma}g}(x)&
=\brk3{\brk2{\moyo{\brk1{g^*}}{\frac{1}{\gamma}}\circ L}^*
-\dfrac{1}{\gamma}\qq_{\HH}}(x)\nonumber\\
&=\brk3{L^*\epushfwd\brk2{\moyo{\brk1{g^*}}{\frac{1}{\gamma}}}^*}
(x)-\dfrac{1}{\gamma}\qq_{\HH}(x)\nonumber\\
&=\brk3{L^*\epushfwd\brk2{g^{**}+\dfrac{1}{\gamma}\qq_{\GG}}}(x)
-\dfrac{1}{\gamma}\qq_{\HH}(x)\nonumber\\
&=\min_{\substack{y\in\GG\\L^*y=x}}\brk2{g^{**}(y)
+\dfrac{1}{\gamma}\qq_{\GG}(y)}-\dfrac{1}{\gamma}\qq_{\HH}(x)
\nonumber\\
&=\min_{\substack{y\in\GG\\L^*y=x}}\brk2{g^{**}(y)
+\dfrac{1}{\gamma}\qq_{\GG}(y)-\dfrac{1}{\gamma}\qq_{\HH}(L^*y)}
\nonumber\\
&=\min_{\substack{y\in\GG\\L^*y=x}}\brk2{g^{**}(y)
+\dfrac{1}{\gamma}\Phi(y)}.
\end{align}

\ref{p:4ii}: By Definition~\ref{d:1}, \ref{p:4i}, and 
Lemmas~\ref{l:1}\ref{l:1iii} and \ref{l:5}\ref{l:5ii}, 
\begin{equation}
L\proxcc{\gamma}g=\brk1{L\proxc{1/\gamma}g^*}^*
=\brk2{L^*\epushfwd\brk1{g^{***}+\gamma\Phi}}^*
=\brk2{L^*\epushfwd\brk1{g^*+\gamma\Phi}}^*
=\brk1{g^*+\gamma\Phi}^*\circ L.
\end{equation}

\ref{p:4iii}: Since $\dom\Phi=\GG$,
\cite[Proposition~12.36(i)]{Livre1} and \ref{p:4i} imply that
$\dom(L\proxc{\gamma}g)=L^*(\dom(g^{**}+\Phi/\gamma))
=L^*(\dom g^{**})$.

\ref{p:4iii+}: By Lemma~\ref{l:7}, $\Phi\in\Gamma_0(\GG)$. Because
$\dom\Phi=\GG$, the identity $(\gamma\Phi)^*=\Phi^*/\gamma$ and
\cite[Proposition~15.2]{Livre1} imply that
\begin{equation}
\label{e:p4iii}
\brk1{g^*+\gamma\Phi}^*=g^{**}\infconv(\gamma\Phi)^*
=g^{**}\infconv\brk1{\Phi^*/\gamma}.
\end{equation}
On the other hand, we have $(1-\|L\|^2)\qq_{\GG}\leq\Phi$.
Hence, in view of property~\ref{p:4iii+a} and
Lemma~\ref{l:1}\ref{l:1ii}, we have 
$\Phi^*\leq\qq_{\GG}/(1-\|L\|^2)$, which yields
$\dom\Phi^*=\GG$. We thus deduce from \eqref{e:p4iii} that
$\dom(g^*+\gamma\Phi)^*=\dom g^{**}+\dom\Phi^*=\GG$ and obtain the
assertion via \ref{p:4ii}.

\ref{p:4iv}: Since $\Phi\leq\qq_{\GG}$,
$g^*+\gamma\Phi\leq g^*+\gamma\qq_{\GG}$. In turn,
Lemmas~\ref{l:8}\ref{l:8iv} and \ref{l:1}\ref{l:1ii}, and 
\ref{p:4ii} imply that
\begin{equation}
\moyo{\brk1{g^{**}}}{\gamma}\circ L
=\brk1{g^*+\gamma\qq_{\GG}}^*\circ L
\leq\brk1{g^*+\gamma\Phi}^*\circ L=L\proxcc{\gamma}g,
\end{equation}
which completes the proof.
\end{proof}

\begin{remark}
Suppose that $L\in\BL(\HH,\GG)$ satisfies $\|L\|=1$, set
$\Phi=\qq_{\GG}-\qq_{\HH}\circ L^*$, and set 
$A=\Id_{\GG}-L\circ L^*$. Then $A$ is monotone and self-adjoint, 
$\Phi\colon y\mapsto\scal{y}{Ay}_{\GG}/2$, and Lemma~\ref{l:10}
shows that $\dom\Phi^*=\ran A$ under the assumption that $\ran A$
is closed. In this case, arguing as in \eqref{e:p4iii} and using
Proposition~\ref{p:4}\ref{p:4ii}, we obtain
$\dom(L\proxcc{\gamma}g)=L^{-1}(\dom g^{**}+\ran A)$. 
\end{remark}

\begin{proposition}
\label{p:-1}
Let $L\in\BL(\HH,\GG)$ be such that $\ran L$ is closed and 
$\ker L=\{0\}$, let $g\colon\GG\to\RX$ be a proper function such
that $\cam g\neq\emp$, and let $\gamma\in\RPP$. Then the following
hold:
\begin{enumerate}
\item
\label{p:-1i}
Suppose that $g^{**}$ is coercive. Then $L\proxcc{\gamma}g$ is
coercive.
\item
\label{p:-1ii}
Suppose that $g^{**}$ is supercoercive. Then $L\proxcc{\gamma}g$ is
supercoercive.
\end{enumerate}
\end{proposition}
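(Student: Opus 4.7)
The approach rests on the minorization
\begin{equation*}
L\proxcc{\gamma}g \;\geq\; \moyo{(g^{**})}{\gamma}\circ L
\end{equation*}
furnished by Proposition~\ref{p:4}\ref{p:4iv}: since coercivity and supercoercivity pass from a minorant to any majorant, it is enough to establish the corresponding property for $\moyo{(g^{**})}{\gamma}\circ L$.

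First, the assumption that $\ran L$ is closed and $\ker L=\{0\}$ yields, via the open mapping theorem, a constant $\eta\in\RPP$ such that $\|Lx\|_{\GG}\geq\eta\|x\|_{\HH}$ for every $x\in\HH$. Consequently, pre-composition with $L$ transfers coercivity and supercoercivity from any function on $\GG$ to its pullback on $\HH$, because $\|x\|_{\HH}\to\pinf$ forces $\|Lx\|_{\GG}\to\pinf$ at a linear rate.

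Second, by Lemma~\ref{l:1}\ref{l:1iv} and Lemma~\ref{l:8}\ref{l:8i-}, $g^{**}\in\Gamma_0(\GG)$. I invoke the standard characterization---derivable from Lemma~\ref{l:5}\ref{l:5i} together with the equivalences ``$f\in\Gamma_0(\GG)$ is coercive iff $0\in\inte\dom f^*$, supercoercive iff $\dom f^*=\GG$''---that such an $f$ is coercive iff $f\geq\alpha\|\cdot\|_{\GG}+\beta$ for some $\alpha\in\RPP$, $\beta\in\RR$, and supercoercive iff such a bound is available for every $\alpha\in\RPP$. Assuming $g^{**}\geq\alpha\|\cdot\|_{\GG}+\beta$, the triangle inequality $\|y\|_{\GG}\geq\|x\|_{\GG}-\|x-y\|_{\GG}$ combined with $\inf_{t\geq 0}(-\alpha t+t^2/(2\gamma))=-\alpha^2\gamma/2$ yields, upon minimizing in $y\in\GG$,
\begin{equation*}
\moyo{(g^{**})}{\gamma}\;\geq\;\alpha\|\cdot\|_{\GG}+\beta-\dfrac{\alpha^2\gamma}{2},
\end{equation*}
so $\moyo{(g^{**})}{\gamma}$ inherits coercivity (resp.\ supercoercivity) from $g^{**}$.

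Combining the two steps gives the coercivity (resp.\ supercoercivity) of $\moyo{(g^{**})}{\gamma}\circ L$, whence of its majorant $L\proxcc{\gamma}g$, proving \ref{p:-1i} and \ref{p:-1ii}. The only non-routine point is preserving the linear-in-norm minorant through the Moreau envelope, which reduces to the scalar optimization above; the bounded-below property of $L$ then handles the precomposition immediately.
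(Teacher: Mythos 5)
Your proof is correct and follows essentially the same route as the paper: minorize $L\proxcc{\gamma}g$ by $\moyo{(g^{**})}{\gamma}\circ L$ via Proposition~\ref{p:4}\ref{p:4iv}, use the bound $\|L\cdot\|_{\GG}\geq\eta\|\cdot\|_{\HH}$ coming from the closed range and trivial kernel, and observe that the Moreau envelope preserves (super)coercivity. The only difference is that the paper simply cites \cite[Corollary~14.18]{Livre1} for this last preservation property, whereas you reprove it by hand through linear minorants and the scalar minimization $\inf_{t\geq 0}(-\alpha t+t^2/(2\gamma))=-\alpha^2\gamma/2$, which is a valid, self-contained substitute.
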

\begin{proof}
It follows from \cite[Fact~2.26]{Livre1} that there exists
$\alpha\in\RPP$ such that
$\|L\cdot\|_{\GG}\geq\alpha\|\cdot\|_{\HH}$. Thus, 
$\|Lx\|_{\GG}\to\pinf$ as $\|x\|_{\HH}\to\pinf$. On the other hand,
combining Lemmas~\ref{l:1}\ref{l:1iv} and \ref{l:8}\ref{l:8i-}, we
obtain $g^{**}\in\Gamma_0(\GG)$.

\ref{p:-1i}: By \cite[Corollary~14.18(i)]{Livre1},
$\moyo{(g^{**})}{\gamma}$ is coercive. Therefore,
Proposition~\ref{p:4}\ref{p:4iv} implies that
$(L\proxcc{\gamma}g)(x)\geq(\moyo{(g^{**})}{\gamma})(Lx)\to\pinf$
as $\|x\|_{\HH}\to\pinf$.

\ref{p:-1ii}: By \cite[Corollary~14.18(ii)]{Livre1},
$\moyo{(g^{**})}{\gamma}$ is supercoercive. Hence,
Proposition~\ref{p:4}\ref{p:4iv} yields
\begin{equation}
\dfrac{\brk1{L\proxcc{\gamma}g}(x)}{\|x\|_{\HH}}\geq
\dfrac{\moyo{\brk1{g^{**}}}{\gamma}(Lx)}{\|x\|_{\HH}}\geq
\alpha\dfrac{\moyo{\brk1{g^{**}}}{\gamma}(Lx)}{\|Lx\|_{\GG}}
\to\pinf\quad\text{as}\quad\|x\|_{\HH}\to\pinf,
\end{equation}
which concludes the proof.
\end{proof}

The next proposition studies the effect of quadratic perturbations
and translations.

\begin{proposition}
\label{p:5}
Let $L\in\BL(\HH,\GG)$, $g\in\Gamma_0(\GG)$, $\alpha\in\RR$,
$\gamma\in\RPP$, $\rho\in\RP$, and $u\in\HH$. Given $w\in\GG$, set
$\tau_w\,g\colon y\mapsto g(y-w)$. Then the following hold:
\begin{enumerate}
\item
\label{p:5i}
Set $\beta=\gamma/(1+\rho\gamma)$. Then 
$L\proxc{\gamma}(g+\rho\qq_{\GG}+\scal{\cdot}{Lu}_{\GG}+\alpha)
=(L\proxc{\beta}g)+\rho\qq_{\HH}+\scal{\cdot}{u}_{\HH}+\alpha$.
\item
\label{p:5ii}
$L\proxcc{\gamma}(\tau_{Lu}\,g+\alpha)
=\tau_{u}(L\proxcc{\gamma}g)+\alpha$.
\end{enumerate}
\end{proposition}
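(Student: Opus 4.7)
The plan for (i) is to invoke Proposition~\ref{p:4}\ref{p:4i} applied to $h=g+\rho\qq_{\GG}+\scal{\cdot}{Lu}_{\GG}+\alpha$, which lies in $\Gamma_0(\GG)$ since $g$ does; in particular $h^{**}=h$. This gives, for every $x\in\HH$,
\begin{equation*}
(L\proxc{\gamma}h)(x)
=\min_{\substack{y\in\GG\\L^*y=x}}
\brk1{g(y)+\rho\qq_{\GG}(y)+\scal{y}{Lu}_{\GG}+\alpha+\Phi(y)/\gamma}.
\end{equation*}
Under the feasibility constraint $L^*y=x$, the linear term satisfies $\scal{y}{Lu}_{\GG}=\scal{x}{u}_{\HH}$, which is constant in $y$, and the identity $\Phi(y)=\qq_{\GG}(y)-\qq_{\HH}(L^*y)=\qq_{\GG}(y)-\qq_{\HH}(x)$ converts $\Phi(y)/\gamma$ into $\qq_{\GG}(y)/\gamma$ minus a term depending on $x$ alone.

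The crux is then a purely algebraic regrouping: I combine $\rho\qq_{\GG}(y)+\qq_{\GG}(y)/\gamma=\qq_{\GG}(y)/\beta$ via the defining relation $1/\beta=1/\gamma+\rho$, and split back $\qq_{\GG}(y)/\beta=\Phi(y)/\beta+\qq_{\HH}(x)/\beta$. Factoring out every $y$-independent contribution reduces the residual problem to $\min_{L^*y=x}(g(y)+\Phi(y)/\beta)$, which, by a second application of Proposition~\ref{p:4}\ref{p:4i} to $g$ with parameter $\beta$, equals $(L\proxc{\beta}g)(x)$. The surviving $x$-dependent residue collapses to $\rho\qq_{\HH}(x)+\scal{x}{u}_{\HH}+\alpha$ thanks to $1/\beta-1/\gamma=\rho$, which is exactly the claimed identity.

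For (ii), I will dualize: by Definition~\ref{d:1}, $L\proxcc{\gamma}(\tau_{Lu}g+\alpha)=(L\proxc{1/\gamma}(\tau_{Lu}g+\alpha)^*)^*$, while a direct calculation gives $(\tau_{Lu}g+\alpha)^*=g^*+\scal{\cdot}{Lu}_{\GG}-\alpha$. Since $g^*\in\Gamma_0(\GG)$ by Lemma~\ref{l:8}\ref{l:8i-}, applying (i) to $g^*$ with the parameter $1/\gamma$ in place of $\gamma$, $\rho=0$ (so the new $\beta$ equals $1/\gamma$), and $-\alpha$ in place of $\alpha$ collapses the inside to $L\proxc{1/\gamma}g^*+\scal{\cdot}{u}_{\HH}-\alpha$. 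Taking Legendre conjugates via the standard shift rule $(F+\scal{\cdot}{u}_{\HH}-\alpha)^*=F^*(\Cdot-u)+\alpha$ and invoking Definition~\ref{d:1} once more yields $\tau_u(L\proxcc{\gamma}g)+\alpha$. The main obstacle is the bookkeeping in (i)—carefully separating the $y$-dependent quadratic $\qq_{\GG}(y)$ into a $\Phi(y)/\beta$ part that preserves the form required by Proposition~\ref{p:4}\ref{p:4i} and a pure $x$-residue—so that the updated parameter $\beta$ emerges naturally; once (i) is in hand, (ii) is essentially formal via duality with $\rho=0$.
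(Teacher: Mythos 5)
Your proposal is correct and follows essentially the same route as the paper: both parts rest on Proposition~\ref{p:4}\ref{p:4i} to rewrite the proximal composition as a constrained minimum, exploit $L^*y=x$ to extract the $y$-independent terms, and absorb $\rho\qq_{\GG}$ into the $\Phi$-term via $1/\beta=1/\gamma+\rho$, with (ii) obtained by conjugating (i) at $\rho=0$. The only (immaterial) difference is bookkeeping: the paper decomposes $\rho\qq_{\GG}(y)=\rho\Phi(y)+\rho\qq_{\HH}(L^*y)$ directly, whereas you pass through $\qq_{\GG}(y)/\beta$ and split back, which amounts to the same algebra.
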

\begin{proof}
\ref{p:5i}: Let $x\in\HH$, set
$h=g+\rho\qq_{\GG}+\scal{\cdot}{Lu}_{\GG}+\alpha$, and set 
$\Phi=\qq_{\GG}-\qq_{\HH}\circ L^*$. Since $g\in\Gamma_0(\GG)$ and
$\rho\geq 0$, we have $h\in\Gamma_0(\GG)$. In turn,
Lemma~\ref{l:8}\ref{l:8i-} yields $h^*\in\Gamma_0(\GG)$,
$h^{**}=h$, and $g^{**}=g$. Therefore, it follows from
Proposition~\ref{p:4}\ref{p:4i} that
\begin{align}
\brk1{L\proxc{\gamma}h}(x)
&=\min_{\substack{y\in\GG\\L^*y=x}}\brk2{h(y)
+\dfrac{1}{\gamma}\Phi(y)}\nonumber\\
&=\min_{\substack{y\in\GG\\L^*y=x}}\brk2{g(y)+\rho\qq_{\GG}(y)
+\scal{y}{Lu}_{\GG}+\alpha+\dfrac{1}{\gamma}\Phi(y)}\nonumber\\
&=\min_{\substack{y\in\GG\\L^*y=x}}\brk2{g(y)+\rho\Phi(y)
+\rho\qq_{\HH}(L^*y)+\scal{L^*y}{u}_{\HH}+\dfrac{1}{\gamma}\Phi(y)}
+\alpha\nonumber\\
&=\min_{\substack{y\in\GG\\L^*y=x}}\brk2{g(y)
+\brk2{\rho+\dfrac{1}{\gamma}}\Phi(y)}
+\rho\qq_{\HH}(x)+\scal{x}{u}_{\HH}+\alpha\nonumber\\
&=\min_{\substack{y\in\GG\\L^*y=x}}\brk2{g(y)
+\dfrac{1}{\beta}\Phi(y)}+\rho\qq_{\HH}(x)+\scal{x}{u}_{\HH}
+\alpha\nonumber\\
&=\brk1{L\proxc{\beta}g}(x)+\rho\qq_{\HH}(x)+\scal{x}{u}_{\HH}
+\alpha.
\end{align}

\ref{p:5ii}: Set $h=\tau_{Lu\,}g+\alpha$. We recall from 
\cite[Proposition~13.23(iii)]{Livre1} that
$h^*=g^*+\scal{\cdot}{Lu}_{\GG}-\alpha$. Hence, using
Definition~\ref{d:1} and \ref{p:5i}, we get
\begin{align}
L\proxcc{\gamma}h
&=\brk2{L\proxc{1/\gamma}\brk1{g^*+\scal{\cdot}{Lu}_{\GG}
-\alpha}}^*\nonumber\\
&=\brk2{\brk1{L\proxc{1/\gamma}g^*}+\scal{\cdot}{u}_{\HH}-\alpha}^*
\nonumber\\
&=\tau_{u}\brk1{L\proxc{1/\gamma}g^*}^*+\alpha
\nonumber\\
&=\tau_{u}\brk1{L\proxcc{\gamma}g}+\alpha,
\end{align}
as claimed.
\end{proof}

\subsection{Convex-analytical properties}
\label{sec:3.2}

We first study the convexity, Legendre conjugacy, and
differentiability properties of proximal compositions. We then turn
our attention to the evaluation of their proximity operators,
subdifferentials, Moreau envelopes, recession functions, and
perspective functions. 

\begin{proposition}
\label{p:6}
Suppose that $0\neq L\in\BL(\HH,\GG)$, let $g\colon\GG\to\RX$ be a
proper function such that $\cam g\neq\emp$, let $\gamma\in\RPP$,
and let $\alpha\in\intv[r]{-1/\gamma}{\pinf}$. Suppose
that $g^{**}-\alpha\qq_{\GG}$ is convex and set
$\beta=(\alpha+1/\gamma)/\|L\|^2-1/\gamma$. Then
$L\proxc{\gamma}g-\beta\qq_{\HH}\in\Gamma_0(\HH)$.
\end{proposition}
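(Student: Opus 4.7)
The plan is to reduce $L\proxc{\gamma}g-\beta\qq_{\HH}$ to the infimal postcomposition of a convex function, with convexity emerging from combining the hypothesis on $g^{**}$ with a rescaling argument based on Lemma~\ref{l:7}. First I would set $\mu=\beta+1/\gamma=(\alpha+1/\gamma)/\|L\|^2$, which is nonnegative since $\alpha\geq -1/\gamma$, and observe from Definition~\ref{d:1} that $L\proxc{\gamma}g-\beta\qq_{\HH}=(\moyo{(g^*)}{1/\gamma}\circ L)^{*}-\mu\qq_{\HH}$.

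Next, using the computation carried out in the proof of Proposition~\ref{p:4}\ref{p:4i}, I would write
\begin{equation*}
(L\proxc{\gamma}g)(x)=\min_{\substack{y\in\GG\\ L^*y=x}}
\bigl(g^{**}(y)+\qq_{\GG}(y)/\gamma\bigr)-\qq_{\HH}(x)/\gamma,
\end{equation*}
and then, since $\qq_{\HH}(L^*y)=\qq_{\HH}(x)$ on the fiber $\{L^*y=x\}$, I would push the last term back inside the minimum to obtain
\begin{equation*}
(L\proxc{\gamma}g-\beta\qq_{\HH})(x)=
\min_{\substack{y\in\GG\\ L^*y=x}}\widetilde{F}(y),
\qquad\widetilde{F}=g^{**}+\qq_{\GG}/\gamma-\mu\qq_{\HH}\circ L^*.
\end{equation*}
In other words, $L\proxc{\gamma}g-\beta\qq_{\HH}=L^{*}\pushfwd\widetilde{F}$.

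The heart of the matter is the convexity of $\widetilde{F}$, which I would establish via the decomposition
\begin{equation*}
\widetilde{F}=\bigl(g^{**}-\alpha\qq_{\GG}\bigr)
+(\alpha+1/\gamma)\bigl(\qq_{\GG}-\qq_{\HH}\circ(L/\|L\|)^*\bigr).
\end{equation*}
The first summand is convex by hypothesis. For the second, the operator $L/\|L\|$ has norm $1$, so Lemma~\ref{l:7} makes $\qq_{\GG}-\qq_{\HH}\circ(L/\|L\|)^*$ convex, and multiplication by the nonnegative scalar $\alpha+1/\gamma$ preserves convexity. Hence $\widetilde{F}$ is convex, and it even lies in $\Gamma_0(\GG)$ as the sum of a $\Gamma_0$ function and a continuous convex function, whence the infimal postcomposition $L^{*}\pushfwd\widetilde{F}$ is convex.

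For properness and lower semicontinuity, I would return to $L\proxc{\gamma}g-\beta\qq_{\HH}=(\moyo{(g^{*})}{1/\gamma}\circ L)^{*}-\mu\qq_{\HH}$: Lemma~\ref{l:1}\ref{l:1iv} provides $g^{*}\in\Gamma_{0}(\GG)$, Lemma~\ref{l:8}\ref{l:8iii} makes $\moyo{(g^{*})}{1/\gamma}$ real-valued, continuous, and convex, so $\moyo{(g^{*})}{1/\gamma}\circ L\in\Gamma_{0}(\HH)$ and hence its conjugate lies in $\Gamma_{0}(\HH)$ by Lemma~\ref{l:8}\ref{l:8i-}; subtracting the finite continuous function $\mu\qq_{\HH}$ preserves properness and lower semicontinuity. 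The main obstacle is spotting the decomposition of $\widetilde{F}$: once one notices that the quadratic terms $\qq_{\GG}/\gamma-\mu\qq_{\HH}\circ L^{*}$ regroup, via the definition of $\mu$, into $(\alpha+1/\gamma)\bigl(\qq_{\GG}-\qq_{\HH}\circ(L/\|L\|)^{*}\bigr)$ so as to match Lemma~\ref{l:7} after normalization of $L$ to unit norm, the remainder is a routine verification.
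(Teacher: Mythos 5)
Your proof is correct and follows essentially the same route as the paper's: both represent $L\proxc{\gamma}g-\beta\qq_{\HH}$ via Proposition~\ref{p:4}\ref{p:4i} as an infimal postcomposition of $(g^{**}-\alpha\qq_{\GG})+(\alpha+1/\gamma)\bigl(\qq_{\GG}-\qq_{\HH}\circ L^*/\|L\|^2\bigr)$ and handle properness and lower semicontinuity through the conjugate representation $(\moyo{(g^*)}{1/\gamma}\circ L)^*$. The only cosmetic difference is that you obtain the convexity of the auxiliary quadratic by normalizing $L$ to unit norm and citing Lemma~\ref{l:7}, whereas the paper verifies directly that $\nabla\bigl(\|L\|^2\qq_{\GG}-\qq_{\HH}\circ L^*\bigr)$ is monotone; these are interchangeable.
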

\begin{proof}
By Lemma~\ref{l:1}\ref{l:1iv}, $g^*\in\Gamma_0(\GG)$. Thus,
Lemma~\ref{l:8}\ref{l:8iii} implies that
$\moyo{(g^*)}{\frac{1}{\gamma}}\circ L\in\Gamma_0(\HH)$.
In turn, Lemma~\ref{l:8}\ref{l:8i-} and Definition~\ref{d:1} yield
$L\proxc{\gamma}g+\qq_{\HH}/\gamma
=(\moyo{(g^*)}{\frac{1}{\gamma}}\circ L)^*\in\Gamma_0(\HH)$.
Since $(-\beta-1/\gamma)\qq_{\HH}$ is continuous with domain
$\GG$, by \cite[Lemma~1.27]{Livre1},
$L\proxc{\gamma}g-\beta\qq_{\HH}
=L\proxc{\gamma}g+\qq_{\HH}/\gamma
+(-\beta-1/\gamma)\qq_{\HH}$
is proper and lower semicontinuous. It remains to show that
$L\proxc{\gamma}g-\beta\qq_{\HH}$ is convex.
Let $x\in\HH$, set $\psi=\|L\|^2\qq_{\GG}-\qq_{\HH}\circ L^*$, and
set $\Phi=\qq_{\GG}-\qq_{\HH}\circ L^*$. By
Proposition~\ref{p:4}\ref{p:4i},
\begin{align}
\label{e:p15a}
\brk1{L\proxc{\gamma}g}(x)-\beta\qq_{\HH}(x)
&=\min_{\substack{y\in\GG\\L^*y=x}}\brk2{g^{**}(y)
+\dfrac{1}{\gamma}\Phi(y)}-\beta\qq_{\HH}(x)\nonumber\\
&=\min_{\substack{y\in\GG\\L^*y=x}}\brk2{g^{**}(y)
+\dfrac{1}{\gamma}\Phi(y)-\beta\qq_{\HH}(L^*y)}\nonumber\\
&=\min_{\substack{y\in\GG\\L^*y=x}}\brk2{g^{**}(y)
+\dfrac{1}{\gamma}\qq_{\GG}(y)
-\dfrac{1}{\|L\|^2}\brk2{\alpha+\dfrac{1}{\gamma}}\qq_{\HH}(L^*y)}
\nonumber\\
&=\min_{\substack{y\in\GG\\L^*y=x}}\brk2{\brk1{g^{**}(y)
-\alpha\qq_{\GG}(y)}+\brk2{\beta+\dfrac{1}{\gamma}}
\psi(y)}.
\end{align}
Since $\nabla\psi=\|L\|^2\Id_{\GG}-L\circ L^*$, for every
$y\in\GG$,
$\scal{\nabla\psi(y)}{y}_{\GG}=\|L\|^2\|y\|_{\GG}^2
-\|L^*y\|_{\HH}^2\geq 0$.
Therefore, we infer from \cite[Proposition~17.7]{Livre1} that
$\psi$ is convex.
Further, since $\alpha+1/\gamma\geq 0$, $(\beta+1/\gamma)\psi$ is
convex with domain $\GG$. By assumption,
$g^{**}-\alpha\qq_{\GG}\in\Gamma_0(\GG)$. Hence, the
function $(g^{**}-\alpha\qq_{\GG})+(\beta+1/\gamma)\psi$ is proper
and convex. Altogether, in view of \eqref{e:p15a} and
\cite[Proposition~12.36(ii)]{Livre1}, we conclude that
$L\proxc{\gamma}g-\beta\qq_{\HH}$ is convex.
\end{proof}

\begin{proposition}
\label{p:7}
Suppose that $L\in\BL(\HH,\GG)$ satisfies $0<\|L\|\leq 1$, let
$g\colon\GG\to\RX$ be a proper function such that $\cam g\neq\emp$,
and let $\gamma\in\RPP$. Then the following hold:
\begin{enumerate}
\item
\label{p:7i}
$L\proxc{\gamma}g\in\Gamma_0(\HH)$ and 
$L\proxcc{\gamma}g\in\Gamma_0(\HH)$. 
\item
\label{p:7ii}
$(L\proxcc{\gamma}g)^*=L\proxc{1/\gamma}g^*$.
\item
\label{p:7iii}
$L\proxc{\gamma}g=(L\proxcc{1/\gamma}g^*)^*$.
\end{enumerate}
\end{proposition}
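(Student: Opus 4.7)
The plan is to reduce everything to Proposition~\ref{p:6} (with $\alpha=0$) and then use the duality identities from Proposition~\ref{p:1} together with biconjugation.

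For \ref{p:7i}, I would start with $L\proxc{\gamma}g$. Since $g$ is proper with $\cam g\neq\emp$, Lemma~\ref{l:1}\ref{l:1iv} gives $g^*\in\Gamma_0(\GG)$, hence $g^{**}\in\Gamma_0(\GG)$ by Lemma~\ref{l:8}\ref{l:8i-}, so $g^{**}-0\cdot\qq_{\GG}$ is convex. Apply Proposition~\ref{p:6} with $\alpha=0$: the parameter becomes $\beta=(1/\gamma)/\|L\|^2-1/\gamma=(1-\|L\|^2)/(\gamma\|L\|^2)\geq 0$, since $\|L\|\le 1$. Therefore $L\proxc{\gamma}g-\beta\qq_{\HH}\in\Gamma_0(\HH)$, and adding back the continuous convex function $\beta\qq_{\HH}$ yields $L\proxc{\gamma}g\in\Gamma_0(\HH)$. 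For $L\proxcc{\gamma}g$, I would apply this same argument to $g^*$ in place of $g$: since $g^*\in\Gamma_0(\GG)$, it is proper with $\cam g^*\neq\emp$ by Lemma~\ref{l:8}\ref{l:8i--}, so replacing $\gamma$ by $1/\gamma$ gives $L\proxc{1/\gamma}g^*\in\Gamma_0(\HH)$; by Definition~\ref{d:1}, $L\proxcc{\gamma}g=(L\proxc{1/\gamma}g^*)^*$, and the conjugate of a $\Gamma_0$ function lies in $\Gamma_0$ by Lemma~\ref{l:8}\ref{l:8i-}.

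For \ref{p:7ii}, I would simply biconjugate: by Definition~\ref{d:1}, $(L\proxcc{\gamma}g)^*=(L\proxc{1/\gamma}g^*)^{**}$, and since \ref{p:7i} applied to $g^*$ ensures $L\proxc{1/\gamma}g^*\in\Gamma_0(\HH)$, Lemma~\ref{l:8}\ref{l:8i-} gives $(L\proxc{1/\gamma}g^*)^{**}=L\proxc{1/\gamma}g^*$.

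For \ref{p:7iii}, I would invoke Proposition~\ref{p:1}\ref{p:1v}, which states $(L\proxc{\gamma}g)^{**}=(L\proxcc{1/\gamma}g^*)^*$, and then use the fact that $L\proxc{\gamma}g\in\Gamma_0(\HH)$ by \ref{p:7i}, so $(L\proxc{\gamma}g)^{**}=L\proxc{\gamma}g$ by Lemma~\ref{l:8}\ref{l:8i-}.

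No real obstacle is anticipated: the only substantive ingredient is Proposition~\ref{p:6}, and the norm bound $\|L\|\le 1$ is exactly what is needed to make the perturbation parameter $\beta$ nonnegative so that the subtracted quadratic can be harmlessly re-added. The remaining items are formal manipulations of conjugates.
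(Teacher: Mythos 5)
Your proposal is correct and follows essentially the same route as the paper: item (i) via Proposition~\ref{p:6} with $\alpha=0$ (where $0<\|L\|\le 1$ makes $\beta\ge 0$), applied both to $g$ and to $g^*$, and items (ii)--(iii) by biconjugation using Lemma~\ref{l:8}\ref{l:8i-} and Proposition~\ref{p:1}\ref{p:1v}. No gaps.
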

\begin{proof}
Recall that Lemmas~\ref{l:1}\ref{l:1iv} and
\ref{l:8}\ref{l:8i--} assert that $g^*\in\Gamma_0(\GG)$ and
$\cam g^*\neq\emp$.

\ref{p:7i}:
Lemma~\ref{l:8}\ref{l:8i-} yields $g^{**}\in\Gamma_0(\GG)$. 
Now set $\beta=(1/\|L\|^2-1)/\gamma$. Then $\beta\geq 0$ and, 
by applying Proposition~\ref{p:6} with $\alpha=0$, we see
that $L\proxc{\gamma}g-\beta\qq_{\HH}\in\Gamma_0(\HH)$ and
hence that $L\proxc{\gamma}g\in\Gamma_0(\HH)$. Likewise, applying
Proposition~\ref{p:6} with $\alpha=0$ to $g^*\in\Gamma_0(\GG)$ 
and using Lemma~\ref{l:1}\ref{l:1iii} we get 
$L\proxc{1/\gamma}g^*\in\Gamma_0(\HH)$. In view of 
Definition~\ref{d:1} and Lemma~\ref{l:8}\ref{l:8i-}, we conclude
that $L\proxcc{\gamma}g\in\Gamma_0(\HH)$.

\ref{p:7ii}: 
We derive from Definition~\ref{d:1}, \ref{p:7i}, and
Lemma~\ref{l:8}\ref{l:8i-} that 
$(L\proxcc{\gamma}g)^*=(L\proxc{1/\gamma}g^*)^{**}
=L\proxc{1/\gamma}g^*$.

\ref{p:7iii}: By Proposition~\ref{p:1}\ref{p:1v},
\ref{p:7i}, and Lemma~\ref{l:8}\ref{l:8i-},
$(L\proxcc{1/\gamma}g^*)^*=(L\proxc{\gamma}g)^{**}
=L\proxc{\gamma}g$.
\end{proof}

The next result examines differentiability. 

\begin{proposition}
\label{p:18}
Suppose that $L\in\BL(\HH,\GG)$ satisfies $0<\|L\|\leq 1$, let
$g\colon\GG\to\RX$ be a proper function such that $\cam g\neq\emp$,
and let $\gamma\in\RPP$. Then the following hold:
\begin{enumerate}
\item
\label{p:18i}
Suppose that $\|L\|<1$ and set $\beta=\gamma(1/\|L\|^2-1)$. Then
$L\proxcc{\gamma}g$ is differentiable
with a $(1/\beta)$-Lipschitzian gradient.
\item
\label{p:18ii}
Let $\theta\in\RPP$, suppose that $g$ is real-valued, convex, and
differentiable with a $\theta$-Lipschitzian gradient, and set
$\beta=(1/\theta+\gamma)/\|L\|^2-\gamma$. Then
$L\proxcc{\gamma}g$ is differentiable with a
$(1/\beta)$-Lipschitzian gradient.
\end{enumerate}
\end{proposition}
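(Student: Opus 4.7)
The plan is to apply Proposition~\ref{p:4}\ref{p:4ii}, rewrite $L\proxcc{\gamma}g=(g^*+\gamma\Phi)^*\circ L$ with $\Phi=\qq_{\GG}-\qq_{\HH}\circ L^*$, and then invoke the classical Legendre correspondence: if $f\in\Gamma_0(\GG)$ is $\mu$-strongly convex with $\mu\in\RPP$, then $f^*$ is real-valued, Fr\'echet differentiable on $\GG$, and $\nabla f^*$ is $(1/\mu)$-Lipschitzian (see, e.g., \cite[Theorem~18.15]{Livre1}). The Lipschitz constant of $\nabla(L\proxcc{\gamma}g)=L^*\circ\nabla(g^*+\gamma\Phi)^*\circ L$ is then obtained from the chain rule, which multiplies the gradient's Lipschitz constant by $\|L\|^2$. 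Everything reduces to a careful computation of the strong-convexity modulus of $g^*+\gamma\Phi$.

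A preliminary step handles $\Phi$. Since $\nabla\Phi=\Id_{\GG}-L\circ L^*$, the estimate
\begin{equation*}
\scal{\nabla\Phi(y)}{y}_{\GG}=\|y\|_{\GG}^2-\|L^*y\|_{\HH}^2\geq(1-\|L\|^2)\|y\|_{\GG}^2
\end{equation*}
together with \cite[Proposition~17.7]{Livre1} implies that $\Phi-(1-\|L\|^2)\qq_{\GG}$ is convex, i.e., $\Phi$ is $(1-\|L\|^2)$-strongly convex on $\GG$, and hence $\gamma\Phi$ has strong-convexity modulus $\gamma(1-\|L\|^2)$. Using Lemma~\ref{l:1}\ref{l:1iv}, $g^*\in\Gamma_0(\GG)$, so $g^*+\gamma\Phi\in\Gamma_0(\GG)$ and has strong-convexity modulus at least $\gamma(1-\|L\|^2)$ (adding a convex function to a $\mu$-strongly convex function yields a $\mu$-strongly convex function).

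For \ref{p:18i}, the hypothesis $\|L\|<1$ ensures $\gamma(1-\|L\|^2)>0$, so the Legendre correspondence gives $(g^*+\gamma\Phi)^*$ differentiable with $1/[\gamma(1-\|L\|^2)]$-Lipschitz gradient; the chain rule then produces a Lipschitz constant of $\|L\|^2/[\gamma(1-\|L\|^2)]=1/\beta$ for $\nabla(L\proxcc{\gamma}g)$. For \ref{p:18ii}, the assumptions place $g$ in $\Gamma_0(\GG)$, and the Baillon--Haddad direction of the Legendre correspondence yields that $g^*$ is $(1/\theta)$-strongly convex. Adding $\gamma\Phi$ boosts the strong-convexity modulus of $g^*+\gamma\Phi$ to $1/\theta+\gamma(1-\|L\|^2)$, which is always positive (even when $\|L\|=1$ and part~\ref{p:18i} is inapplicable). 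Since $1/\theta+\gamma(1-\|L\|^2)=\|L\|^2\beta$, the same chain-rule argument gives the announced $(1/\beta)$-Lipschitz bound.

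The main obstacle, though modest, is the careful bookkeeping of the strong-convexity modulus of $g^*+\gamma\Phi$: one must verify that, after the $\|L\|^2$-multiplicative effect of the chain rule, the resulting Lipschitz constant matches exactly the value $1/\beta$ prescribed by each formula. The remaining ingredients, namely Proposition~\ref{p:4}\ref{p:4ii}, Lemma~\ref{l:7}, and the strong-convexity/Lipschitz-gradient duality, are invoked essentially as black boxes.
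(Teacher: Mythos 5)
Your proof is correct, but it follows a genuinely different route from the paper's. The paper works entirely on the dual side: it invokes Proposition~\ref{p:7}\ref{p:7ii} to write $(L\proxcc{\gamma}g)^*=L\proxc{1/\gamma}g^*$, then applies Proposition~\ref{p:6} (with $\alpha=0$ in case~\ref{p:18i} and $\alpha=1/\theta$ in case~\ref{p:18ii}) to conclude that $(L\proxcc{\gamma}g)^*-\beta\qq_{\HH}$ is convex, and finally converts this into the Lipschitz-gradient statement via the equivalence of \cite[Theorem~18.15]{Livre1}, after securing $\dom(L\proxcc{\gamma}g)=\HH$ through Proposition~\ref{p:4}\ref{p:4iii+}. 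You instead work on the primal side: starting from the representation $L\proxcc{\gamma}g=(g^*+\gamma\Phi)^*\circ L$ of Proposition~\ref{p:4}\ref{p:4ii}, you track the strong-convexity modulus of $g^*+\gamma\Phi$ (namely $\gamma(1-\|L\|^2)$ in case~\ref{p:18i}, boosted by $1/\theta$ in case~\ref{p:18ii}), pass to the conjugate via the strong-convexity/Lipschitz-gradient duality, and account for the factor $\|L\|^2$ through the chain rule; your arithmetic does reproduce $1/\beta$ exactly in both cases. Your argument is more self-contained and computational --- it bypasses Proposition~\ref{p:6} entirely and makes the origin of the constant $\beta$ transparent --- while the paper's proof is shorter because it reuses the convexity estimate already established in Proposition~\ref{p:6}. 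The only point you gloss over is that $(g^*+\gamma\Phi)^*$ is real-valued on all of $\GG$ (so that the chain rule applies everywhere); this follows from the supercoercivity of strongly convex functions in $\Gamma_0(\GG)$ and is standard, so it is not a genuine gap.
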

\begin{proof}
We recall that a continuous convex function $f\colon\HH\to\RR$ is
differentiable with a $(1/\beta)$-Lipschitzian gradient if and only
if $f^*-\beta\qq_{\HH}$ is convex \cite[Theorem~18.15]{Livre1}.
Further, by Proposition~\ref{p:7}\ref{p:7ii},
$(L\proxcc{\gamma}g)^*=L\proxc{1/\gamma}g^*$.

\ref{p:18i}: By Proposition~\ref{p:4}\ref{p:4iii+a}, 
$\dom(L\proxcc{\gamma}g)=\HH$. Now set $\alpha=0$. Since
$\alpha>-\gamma$, we deduce from Proposition~\ref{p:6} that
$L\proxc{1/\gamma}g^*-\beta\qq_{\HH}$ is convex, i.e., that
$(L\proxcc{\gamma}g)^*-\beta\qq_{\HH}$ is convex.

\ref{p:18ii}: Since $g\in\Gamma_0(\GG)$, Lemma~\ref{l:8}\ref{l:8i-}
yields $\dom g^{**}=\dom g=\GG$. Thus, it results from 
Proposition~\ref{p:4}\ref{p:4iii+b} that
$\dom(L\proxcc{\gamma}g)=\HH$. Now set $\alpha=1/\theta$. Since 
$g^*-\alpha\qq_{\GG}$is convex and $\alpha>-\gamma$,
Proposition~\ref{p:6} implies that
$(L\proxcc{\gamma}g)^*-\beta\qq_{\HH}
=L\proxc{1/\gamma}g^*-\beta\qq_{\HH}$ is convex.
\end{proof}

\begin{remark}
Proposition~\ref{p:18}\ref{p:18i} guarantees the smoothness of 
the proximal cocomposition when $0<\|L\|<1$. 
Proposition~\ref{p:18}\ref{p:18ii} shows that the Lipschitz
constant of the gradient of the cocomposition is improved when the
original function is itself smooth. 
\end{remark}

The following proposition motivates calling $L\proxc{\gamma}g$ a
proximal composition.

\begin{proposition}
\label{p:17}
Suppose that $L\in\BL(\HH,\GG)$ satisfies $0<\|L\|\leq 1$, let
$g\colon\GG\to\RX$ be a proper function such that $\cam g\neq\emp$,
and let $\gamma\in\RPP$. Then the following hold:
\begin{enumerate}
\item
\label{p:17i}
$\prox_{\gamma\brk1{L\proxc{\gamma}g}}
=L^*\circ\prox_{\gamma g^{**}}\circ L$.
\item
\label{p:17ii}
$\prox_{\gamma\brk1{L\proxcc{\gamma}g}}=\Id_{\HH}-
L^*\circ\brk1{\Id_{\GG}-\prox_{\gamma g^{**}}}\circ L$.
\end{enumerate}
\end{proposition}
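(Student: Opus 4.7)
The plan is to prove \ref{p:17i} by directly evaluating the infimum defining $\prox_{\gamma(L\proxc{\gamma}g)}(x)$ using the integral representation supplied by Proposition~\ref{p:4}\ref{p:4i}, and then to derive \ref{p:17ii} by applying Moreau's decomposition (Lemma~\ref{l:8}\ref{l:8ii+}) to $\gamma(L\proxcc{\gamma}g)$ and recasting the conjugate as a new proximal composition to which \ref{p:17i} applies.

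For \ref{p:17i}, fix $x\in\HH$ and set $\Phi=\qq_{\GG}-\qq_{\HH}\circ L^{*}$. By Proposition~\ref{p:4}\ref{p:4i}, for every $y\in\HH$ the value $\gamma(L\proxc{\gamma}g)(y)$ is the infimum of $z\mapsto\gamma g^{**}(z)+\Phi(z)$ over $\{z\in\GG\mid L^{*}z=y\}$. Substituting this representation into the definition of the proximity operator and eliminating $y$ via $y=L^{*}z$ turns the scalar minimization into
\begin{equation*}
z^{\star}=\argmin_{z\in\GG}\brk1{\gamma g^{**}(z)+\Phi(z)+\qq_{\HH}(x-L^{*}z)},\qquad \prox_{\gamma(L\proxc{\gamma}g)}(x)=L^{*}z^{\star}.
\end{equation*}
Expanding the squares yields the algebraic identity $\Phi(z)+\qq_{\HH}(x-L^{*}z)=\qq_{\GG}(z-Lx)+\qq_{\HH}(x)-\qq_{\GG}(Lx)$, so the problem reduces to $z^{\star}=\argmin_{z\in\GG}\brk1{\gamma g^{**}(z)+\qq_{\GG}(z-Lx)}=\prox_{\gamma g^{**}}(Lx)$. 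Hence $\prox_{\gamma(L\proxc{\gamma}g)}(x)=L^{*}\prox_{\gamma g^{**}}(Lx)$.

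For \ref{p:17ii}, Proposition~\ref{p:7}\ref{p:7i} ensures $\gamma(L\proxcc{\gamma}g)\in\Gamma_0(\HH)$, so Lemma~\ref{l:8}\ref{l:8ii+} gives $\prox_{\gamma(L\proxcc{\gamma}g)}=\Id_{\HH}-\prox_{(\gamma(L\proxcc{\gamma}g))^{*}}$. I transform the conjugate by chaining Lemma~\ref{l:2}\ref{l:2i}, Proposition~\ref{p:7}\ref{p:7ii}, Proposition~\ref{p:1}\ref{p:1vii}, and Proposition~\ref{p:1}\ref{p:1vi}, together with the identity $\gamma g^{*}(\cdot/\gamma)=(\gamma g)^{*}$ from Lemma~\ref{l:2}\ref{l:2i}, to obtain
\begin{equation*}
(\gamma(L\proxcc{\gamma}g))^{*}=L\proxc{1}\brk1{(\gamma g)^{*}}.
\end{equation*}
Since $\cam g\neq\emp$ entails $\cam(\gamma g)\neq\emp$, Lemma~\ref{l:1}\ref{l:1iv} gives $(\gamma g)^{*}\in\Gamma_0(\GG)$ and hence $((\gamma g)^{*})^{**}=(\gamma g)^{*}$. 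Applying \ref{p:17i} with parameter $1$ and function $(\gamma g)^{*}$ yields $\prox_{(\gamma(L\proxcc{\gamma}g))^{*}}=L^{*}\circ\prox_{(\gamma g)^{*}}\circ L$. Finally, because $(\gamma g)^{*}=(\gamma g^{**})^{*}$ and $\gamma g^{**}\in\Gamma_0(\GG)$, a second invocation of Lemma~\ref{l:8}\ref{l:8ii+} replaces $\prox_{(\gamma g)^{*}}$ with $\Id_{\GG}-\prox_{\gamma g^{**}}$, producing the announced formula.

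The main obstacle will be the bookkeeping in \ref{p:17ii}: the scaling identities of Proposition~\ref{p:1}\ref{p:1vi}--\ref{p:1vii} and Lemma~\ref{l:2}\ref{l:2i} must be applied in the correct order to bring $(\gamma(L\proxcc{\gamma}g))^{*}$ into the form $L\proxc{1}((\gamma g)^{*})$, and one must check that the function to which \ref{p:17i} is then applied inherits the required properness and affine-minorant hypotheses from $g$.
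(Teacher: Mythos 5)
Your argument is correct. For \ref{p:17ii} you follow essentially the paper's route: Moreau decomposition of $\prox_{\gamma(L\proxcc{\gamma}g)}$, identification of $(\gamma(L\proxcc{\gamma}g))^*$ with $L\proxc{1}((\gamma g)^*)$, then an appeal to \ref{p:17i} and a second Moreau decomposition; the only difference is that the paper obtains the conjugate identity in one step from Proposition~\ref{p:1}\ref{p:1viii} and Definition~\ref{d:1} (using Proposition~\ref{p:7}\ref{p:7i} and Lemma~\ref{l:8}\ref{l:8i-} to drop the biconjugation), rather than by chaining the scaling identities as you propose --- both chains terminate at the same expression. For \ref{p:17i}, however, your route is genuinely different. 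The paper works at the level of subdifferentials: it writes $\qq_{\HH}+\gamma(L\proxc{\gamma}g)$ as the conjugate of $\moyo{((\gamma g)^*)}{1}\circ L$, inverts the resulting gradient via $\partial f^*=(\partial f)^{-1}$ and $\nabla(\moyo{f}{1})=\Id_{\HH}-\prox_{f}$, and reads off the resolvent $(\Id_{\HH}+\gamma\partial(L\proxc{\gamma}g))^{-1}$. You instead substitute the attained infimum of Proposition~\ref{p:4}\ref{p:4i} into the minimization defining the prox, collapse the two nested minimizations into a single one over $z\in\GG$, and complete the square; your identity $\Phi(z)+\qq_{\HH}(x-L^*z)=\qq_{\GG}(z-Lx)+\qq_{\HH}(x)-\qq_{\GG}(Lx)$ is correct, and the reduced problem is exactly $\prox_{\gamma g^{**}}(Lx)$. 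This is more elementary (no chain rule for $\partial(h^*\circ L)$, no envelope gradient formula), at the cost of two points you should make explicit: first, that $L\proxc{\gamma}g\in\Gamma_0(\HH)$ (Proposition~\ref{p:7}\ref{p:7i}) and $g^{**}\in\Gamma_0(\GG)$, so both proximity operators are well defined; second, having found the unique minimizer $z^\star=\prox_{\gamma g^{**}}(Lx)$ of the joint problem, you must still argue that the unique minimizer of $y\mapsto\gamma(L\proxc{\gamma}g)(y)+\qq_{\HH}(x-y)$ is $L^*z^\star$ --- this holds because the value of that function at $L^*z^\star$ is bounded above by the joint minimum, which in turn equals the infimum over $y$. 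These are routine additions, not gaps in the idea.
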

\begin{proof}
As previously noted, $g^*\in\Gamma_0(\GG)$ and 
$g^{**}\in\Gamma_0(\GG)$.

\ref{p:17i}:
It follows from Proposition~\ref{p:1}\ref{p:1vi} and
Definition~\ref{d:1} that
\begin{equation}
\label{e:p17}
\qq_{\HH}+\gamma\brk1{L\proxc{\gamma}g}
=\qq_{\HH}+L\proxc{1}(\gamma g)
=\brk2{\moyo{\brk1{(\gamma g)^*}}{1}\circ L}^*.
\end{equation}
Since Proposition~\ref{p:7}\ref{p:7i} yields
$L\proxc{\gamma}g\in\Gamma_0(\HH)$, we deduce from
\cite[Corollary~16.48(iii)]{Livre1}, \eqref{e:p17}, and
items \ref{l:8ii} and \ref{l:8v} in Lemma~\ref{l:8} that
\begin{equation}
\label{e:p17b}
\Id_{\HH}+\gamma\partial\brk1{L\proxc{\gamma}g}
=\partial\brk2{\qq_{\HH}+\gamma\brk1{L\proxc{\gamma}g}}
=\brk3{\nabla\brk2{\moyo{\brk1{(\gamma g)^*}}{1}\circ L}}^{-1}
=\brk2{L^*\circ\brk1{\Id_{\GG}-\prox_{(\gamma g)^*}}\circ L}^{-1}.
\end{equation}
Hence, by \cite[Proposition~16.44]{Livre1} and 
Lemma~\ref{l:8}\ref{l:8ii+},
$\prox_{\gamma\brk1{L\proxc{\gamma}g}}
=(\Id_{\HH}+\gamma\partial(L\proxc{\gamma}g))^{-1}
=L^*\circ\prox_{(\gamma g)^{**}}\circ L=
L^*\circ\prox_{\gamma g^{**}}\circ L$.

\ref{p:17ii}: By Proposition~\ref{p:1}\ref{p:1viii} and
Definition~\ref{d:1},
$\gamma(L\proxcc{\gamma}g)=L\proxcc{1}(\gamma g)
=(L\proxc{1}(\gamma g)^*)^*$. Therefore, 
Proposition~\ref{p:7}\ref{p:7i} and Lemma~\ref{l:8}\ref{l:8i-} 
entail that $(\gamma(L\proxcc{\gamma}g))^*=L\proxc{1}(\gamma g)^*$.
In turn, we deduce from Lemma~\ref{l:8}\ref{l:8ii+} and \ref{p:17i}
that $\prox_{\gamma\brk1{L\proxcc{\gamma}g}}
=\Id_{\HH}-\prox_{L\proxc{1}(\gamma g)^*}
=\Id_{\HH}-L^*\circ(\Id_{\GG}-\prox_{\gamma g^{**}})\circ L$.
\end{proof}

Our next result concerns the subdifferential of proximal
compositions. We recall that the parallel composition of
$A\colon\HH\to2^\HH$ by $L\in\BL(\HH,\GG)$ is 
$L\pushfwd A=(L\circ A^{-1}\circ L^*)^{-1}$
\cite[Section~25.6]{Livre1}.

\begin{proposition}
\label{p:9}
Suppose that $L\in\BL(\HH,\GG)$ satisfies $0<\|L\|\leq 1$, let
$g\colon\GG\to\RX$ be a proper function such that $\cam g\neq\emp$,
and let $\gamma\in\RPP$. Then the following hold:
\begin{enumerate}
\item
\label{p:9i}
$\partial(L\proxc{\gamma}g)=L^*\pushfwd(\partial g^{**}
+(\Id_{\GG}-L\circ L^*)/\gamma)$.
\item
\label{p:9ii}
$\partial(L\proxcc{\gamma}g)=L^*\circ(\partial g^*
+\gamma(\Id_{\GG}-L\circ L^*))^{-1}\circ L$.
\end{enumerate}
\end{proposition}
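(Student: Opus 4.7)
The plan is to deduce (i) from Proposition~\ref{p:4}\ref{p:4i} via a direct Fenchel--Young argument, and then to obtain (ii) by dualizing (i) through Proposition~\ref{p:7}\ref{p:7ii}.

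For (i), I set $\Phi=\qq_{\GG}-\qq_{\HH}\circ L^*$ and $h=g^{**}+\Phi/\gamma$. Lemmas~\ref{l:1}\ref{l:1iv} and~\ref{l:8}\ref{l:8i-} give $g^{**}\in\Gamma_0(\GG)$, while Lemma~\ref{l:7} combined with $\|L\|\le 1$ ensures that $\Phi$ is convex and continuous on $\GG$; hence $h\in\Gamma_0(\GG)$, and since $\Phi/\gamma$ has full domain the Moreau--Rockafellar sum rule yields
\begin{equation*}
\partial h=\partial g^{**}+\nabla(\Phi/\gamma)=\partial g^{**}+(\Id_{\GG}-L\circ L^*)/\gamma.
\end{equation*}
Proposition~\ref{p:4}\ref{p:4i} identifies $L\proxc{\gamma}g=L^*\epushfwd h$, which lies in $\Gamma_0(\HH)$ by Proposition~\ref{p:7}\ref{p:7i}, and Lemma~\ref{l:5}\ref{l:5ii} gives the conjugate $(L^*\epushfwd h)^*=h^*\circ L$. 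For $x\in\dom(L^*\epushfwd h)$, the exactness property supplies $y\in\GG$ with $L^*y=x$ and $(L^*\epushfwd h)(x)=h(y)$; chaining the Fenchel--Young equivalence with this identity then gives
\begin{equation*}
x^*\in\partial(L^*\epushfwd h)(x)\;\Leftrightarrow\;h(y)+h^*(Lx^*)=\scal{y}{Lx^*}_{\GG}\;\Leftrightarrow\;Lx^*\in\partial h(y),
\end{equation*}
which is precisely the condition $x^*\in(L^*\pushfwd\partial h)(x)$ coming from the definition $L^*\pushfwd\partial h=(L^*\circ(\partial h)^{-1}\circ L)^{-1}$. This establishes (i).

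For (ii), Proposition~\ref{p:7}\ref{p:7ii} reads $(L\proxcc{\gamma}g)^*=L\proxc{1/\gamma}g^*$; since both sides belong to $\Gamma_0(\HH)$ by Proposition~\ref{p:7}\ref{p:7i}, Lemma~\ref{l:8}\ref{l:8i-} yields $L\proxcc{\gamma}g=(L\proxc{1/\gamma}g^*)^*$, after which Lemma~\ref{l:8}\ref{l:8ii} gives $\partial(L\proxcc{\gamma}g)=(\partial(L\proxc{1/\gamma}g^*))^{-1}$. Applying (i) with $\gamma$ replaced by $1/\gamma$ and $g$ replaced by $g^*$ (noting that $(g^*)^{**}=g^*$ and $\cam g^*\neq\emp$ by Lemmas~\ref{l:8}\ref{l:8i-} and~\ref{l:8}\ref{l:8i--}), I obtain
\begin{equation*}
\partial(L\proxc{1/\gamma}g^*)=L^*\pushfwd\brk1{\partial g^*+\gamma(\Id_{\GG}-L\circ L^*)},
\end{equation*}
and the identity $(L^*\pushfwd A)^{-1}=L^*\circ A^{-1}\circ L$, immediate from the definition of the parallel composition, delivers the desired formula.

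The main obstacle is the infimal-postcomposition subdifferential rule $\partial(L^*\epushfwd h)=L^*\pushfwd\partial h$: the sum-rule step is routine once one has continuity of $\Phi/\gamma$, but pushing the subdifferential through $\epushfwd$ forces one to invoke the attained-infimum property encoded in the $\epushfwd$ notation and to track the Fenchel--Young equality carefully via Lemma~\ref{l:5}\ref{l:5ii}. Everything else reduces to algebraic bookkeeping with parallel composition and conjugation.
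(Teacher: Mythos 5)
Your proof is correct, but part \ref{p:9i} follows a genuinely different route from the paper's. The paper never differentiates the infimal postcomposition directly: it writes $L\proxc{\gamma}g=(h^*\circ L)^*$ (via Propositions~\ref{p:4}\ref{p:4ii} and \ref{p:7}\ref{p:7iii}), computes $\partial(h^*\circ L)=L^*\circ(\partial h)^{-1}\circ L$ by the chain rule of \cite[Proposition~16.42]{Livre1} after checking the qualification $\dom h^*\cap\ran L\neq\emp$, and only then inverts using $\partial f^*=(\partial f)^{-1}$. You instead work on the primal side, proving $\partial(L^*\epushfwd h)=L^*\pushfwd\partial h$ by hand through Fenchel--Young and the conjugation formula $(L^*\pushfwd h)^*=h^*\circ L$; this avoids the chain-rule qualification check altogether and is more self-contained. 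One spot deserves an extra line: your equivalence chain fixes $y$ to be the minimizer attaining $(L^*\epushfwd h)(x)$, whereas membership of $x^*$ in $(L^*\pushfwd\partial h)(x)$ only asks for \emph{some} $y'$ with $L^*y'=x$ and $Lx^*\in\partial h(y')$. The missing inclusion follows from the Fenchel--Young inequality: such a $y'$ gives $f(x)+f^*(x^*)\leq h(y')+h^*(Lx^*)=\scal{y'}{Lx^*}_{\GG}=\scal{x}{x^*}_{\HH}$, forcing equality and hence $x^*\in\partial f(x)$ (and, incidentally, that $y'$ is itself a minimizer). With that one-line repair part \ref{p:9i} is complete; part \ref{p:9ii} is handled essentially identically in both proofs, by applying \ref{p:9i} with $(1/\gamma,g^*)$, noting $g^{***}=g^*$, and inverting through $\partial f^*=(\partial f)^{-1}$.
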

\begin{proof}
As seen in Proposition~\ref{p:7}\ref{p:7i}, 
$L\proxc{\gamma}g\in\Gamma_0(\HH)$ and 
$L\proxcc{\gamma}g\in\Gamma_0(\HH)$. Now set
$\Phi=\qq_{\GG}-\qq_{\HH}\circ L^*$ and $h=g^{**}+\Phi/\gamma$. We
deduce from Lemmas~\ref{l:1}\ref{l:1iv}, \ref{l:8}\ref{l:8i-},
and \ref{l:7} that $g^*\in\Gamma_0(\GG)$, 
$g^{**}\in\Gamma_0(\GG)$, and $\Phi\in\Gamma_0(\GG)$. Therefore,
since $\dom\Phi=\GG$, we have $h\in\Gamma_0(\GG)$ and, by
Lemma~\ref{l:8}\ref{l:8i-}, $h^{**}=h$. On the other hand, 
$\dom h^*\cap\ran L\neq\emp$ since 
Propositions~\ref{p:4}\ref{p:4ii} and \ref{p:7}\ref{p:7i}
yield $h^*\circ L=L\proxcc{1/\gamma}g^*\in\Gamma_0(\GG)$. Upon
invoking Propositions~\ref{p:4}\ref{p:4i} and
\ref{p:7}\ref{p:7iii}, we get
\begin{equation}
\label{e:p9a}
L^*\epushfwd h=L\proxc{\gamma}g=\brk2{L\proxcc{1/\gamma}g^*}^*
=\brk1{h^*\circ L}^*.
\end{equation}
Therefore, \cite[Proposition~16.42]{Livre1},
Lemma~\ref{l:8}\ref{l:8ii}, and 
\cite[Corollary~16.48(iii)]{Livre1} imply that
\begin{equation}
\label{e:p9b}
\partial(h^*\circ L)=L^*\circ\partial h^*\circ L
=L^*\circ\brk1{\partial h}^{-1}\circ L
=L^*\circ\brk1{\partial g^{**}+\nabla\Phi/\gamma}^{-1}\circ L.
\end{equation}

\ref{p:9i}: Combining \eqref{e:p9a}, Lemma~\ref{l:8}\ref{l:8ii},
and \eqref{e:p9b}, we obtain
\begin{equation}
\partial\brk1{L\proxc{\gamma}g}=\partial\brk1{h^*\circ L}^*
=\brk1{\partial(h^*\circ L)}^{-1}
=\brk2{L^*\circ\brk1{\partial g^{**}
+\nabla\Phi/\gamma}^{-1}\circ L}^{-1}
=L^*\pushfwd\brk1{\partial g^{**}+\nabla\Phi/\gamma}.
\end{equation}

\ref{p:9ii}: By Definition~\ref{d:1}, Lemma~\ref{l:8}\ref{l:8ii},
\ref{p:9i}, and Lemma~\ref{l:1}\ref{l:1iii}, 
\begin{equation}
\partial\brk1{L\proxcc{\gamma}g}
=\partial\brk1{L\proxc{1/\gamma}g^*}^*
=\brk2{\partial\brk1{L\proxc{1/\gamma}g^*}}^{-1}
=\brk2{L^*\pushfwd\brk1{\partial g^{***}+\gamma\nabla\Phi}}^{-1}
=L^*\circ\brk1{\partial g^*+\gamma\nabla\Phi}^{-1}\circ L,
\end{equation}
which completes the proof.
\end{proof}

\begin{corollary}
\label{c:19}
Suppose that $L\in\BL(\HH,\GG)$ satisfies $0<\|L\|\leq 1$, let 
$\beta\in\RPP$, let $\gamma\in\RPP$, and let
$g\colon\GG\to\RR$ be convex and $\beta$-Lipschitzian. Then
$L\proxcc{\gamma}g$ is $(\beta\|L\|)$-Lipschitzian.
\end{corollary}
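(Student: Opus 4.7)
The plan is to exploit the classical duality characterization of Lipschitz constants for convex functions: a function $f\in\Gamma_0(\HH)$ that is real-valued on $\HH$ is $\alpha$-Lipschitzian on $\HH$ if and only if $\dom f^*\subseteq B(0;\alpha)$. The direction we need (bounded dual domain $\Rightarrow$ Lipschitz) is immediate from $f=f^{**}$: each affine map $x\mapsto\scal{x}{x^*}_{\HH}-f^*(x^*)$ is $\|x^*\|$-Lipschitzian, so their supremum over $x^*\in\dom f^*\subseteq B(0;\alpha)$ is $\alpha$-Lipschitzian.

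First, since $g$ is real-valued, convex, and $\beta$-Lipschitzian, it is continuous; hence $g\in\Gamma_0(\GG)$ with $\dom g=\GG$. A standard argument yields $\dom g^*\subseteq B(0;\beta)$: for any $y^*\in\GG$ with $\|y^*\|_{\GG}>\beta$, substituting $y=ty^*/\|y^*\|_{\GG}$ into the definition of $g^*(y^*)$ and using $g(y)-g(0)\le\beta\|y\|_{\GG}$ forces $g^*(y^*)=\pinf$ as $t\to\pinf$. By Lemma~\ref{l:8}\ref{l:8i-}, $g^*\in\Gamma_0(\GG)$ and $g^{***}=g^*$; by Lemma~\ref{l:8}\ref{l:8i--}, $\cam g^*\neq\emp$.

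Next, I assemble the previously established results. Proposition~\ref{p:7}\ref{p:7i} gives $L\proxcc{\gamma}g\in\Gamma_0(\HH)$, and Proposition~\ref{p:4}\ref{p:4iii+}\ref{p:4iii+b} (with $\dom g^{**}=\dom g=\GG$) yields $\dom(L\proxcc{\gamma}g)=\HH$, so this function is real-valued. Proposition~\ref{p:7}\ref{p:7ii} gives $(L\proxcc{\gamma}g)^*=L\proxc{1/\gamma}g^*$, and applying Proposition~\ref{p:4}\ref{p:4iii} to $g^*$ with parameter $1/\gamma$ produces
\begin{equation}
\dom(L\proxcc{\gamma}g)^*=\dom(L\proxc{1/\gamma}g^*)
=L^*(\dom g^{***})=L^*(\dom g^*)\subseteq L^*\brk1{B(0;\beta)}
\subseteq B\brk1{0;\beta\|L\|}.
\end{equation}
The duality characterization recalled above then yields that $L\proxcc{\gamma}g$ is $(\beta\|L\|)$-Lipschitzian on $\HH$.

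No step is a serious obstacle; the only pitfall is confirming that $L\proxcc{\gamma}g$ is real-valued (so that the Lipschitz characterization applies), which is precisely what Proposition~\ref{p:4}\ref{p:4iii+}\ref{p:4iii+b} provides since the hypothesis $0<\|L\|<1$ need not hold but $\dom g^{**}=\GG$ does. An equally clean alternative, should one prefer a subgradient-based argument, is to invoke Proposition~\ref{p:9}\ref{p:9ii}: every element of $\partial(L\proxcc{\gamma}g)(x)$ has the form $L^*v$ where $v$ lies in $\ran\partial g\subseteq B(0;\beta)$, giving $\|L^*v\|_{\HH}\le\beta\|L\|$ and hence the same Lipschitz estimate.
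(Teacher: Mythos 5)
Your proof is correct, and your main argument takes a route that is dual to, but genuinely distinct from, the paper's. The paper works on the primal side of \cite[Corollary~17.19]{Livre1}: it bounds $\ran\partial(L\proxcc{\gamma}g)$ directly by feeding the subdifferential formula of Proposition~\ref{p:9}\ref{p:9ii} into the inclusion $\dom\partial g^*\subset B(0;\beta)$, obtaining $\ran\partial(L\proxcc{\gamma}g)\subset L^*(B(0;\beta))\subset B(0;\beta\|L\|)$. You instead work on the conjugate side of the same characterization: you pass to $(L\proxcc{\gamma}g)^*=L\proxc{1/\gamma}g^*$ via Proposition~\ref{p:7}\ref{p:7ii} and then use the domain formula of Proposition~\ref{p:4}\ref{p:4iii} to get $\dom(L\proxcc{\gamma}g)^*=L^*(\dom g^*)\subset B(0;\beta\|L\|)$. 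Both arguments are sound and of comparable length; yours has the mild advantage of needing only domain (rather than subdifferential) calculus and of making the real-valuedness issue explicit up front via Proposition~\ref{p:4}\ref{p:4iii+b}, while the paper's version avoids the conjugate-domain form of the Lipschitz criterion. The ``alternative'' you sketch in your closing sentence is in fact precisely the proof the paper gives. One cosmetic point: you apply Proposition~\ref{p:4}\ref{p:4iii} to $g^*$, which requires $g^*$ to be proper with $\cam g^*\neq\emp$; you do justify this (via Lemma~\ref{l:8}\ref{l:8i--} and $g^*\in\Gamma_0(\GG)$), so there is no gap.
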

\begin{proof}
We recall that a lower semicontinuous convex function 
$f\colon\HH\to\RR$ is $\beta$-Lipschitzian if
and only if $\ran\partial f=\dom\partial f^*\subset B(0;\beta)$
\cite[Corollary~17.19]{Livre1}. Since $g\in\Gamma_0(\GG)$,
Lemma~\ref{l:8}\ref{l:8i-} yields $g^*\in\Gamma_0(\GG)$. 
We therefore invoke Proposition~\ref{p:9}\ref{p:9ii} to get
\begin{align}
\label{e:p19}
\ran\partial\brk1{L\proxcc{\gamma}g}
&\subset L^*\brk2{\ran\brk1{\partial g^*
+\gamma(\Id_{\GG}-L\circ L^*)}^{-1}}\nonumber\\
&=L^*\brk2{\dom\brk1{\partial g^*
+\gamma(\Id_{\GG}-L\circ L^*)}}\nonumber\\
&=L^*\brk1{\dom\partial g^*}\nonumber\\
&\subset L^*\brk1{B(0;\beta)}\nonumber\\
&\subset B(0;\beta\|L\|),
\end{align}
where $L\proxcc{\gamma}g\colon\HH\to\RX$ is a real-valued
lower semicontinuous convex function by
Propositions~\ref{p:4}\ref{p:4iii+b} and \ref{p:7}\ref{p:7i}.
\end{proof}

Let us now evaluate Moreau envelopes of proximal cocompositions.

\begin{proposition}
\label{p:10}
Suppose that $L\in\BL(\HH,\GG)$ satisfies $0<\|L\|\leq 1$, let
$g\colon\GG\to\RX$ be a proper function such that $\cam g\neq\emp$,
let $\gamma\in\RPP$, and let $\rho\in\RPP$. Then the following hold:
\begin{enumerate}
\item
\label{p:10i}
$\moyo{(L\proxcc{\gamma+\rho}g)}{\rho}
=L\proxcc{\gamma}(\moyo{g}{\rho})$.
\item
\label{p:10ii}
$\moyo{(L\proxcc{\gamma}g)}{\gamma}
=\moyo{(g^{**})}{\gamma}\circ L$.
\end{enumerate}
\end{proposition}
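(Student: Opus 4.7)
The plan is to verify each identity by passing to Legendre conjugates; since all four functions involved will belong to $\Gamma_0(\HH)$, it will suffice by Lemma~\ref{l:8}\ref{l:8i-} to check equality of those conjugates. Lemma~\ref{l:1}\ref{l:1iv} yields $g^*\in\Gamma_0(\GG)$ and therefore $g^{**}\in\Gamma_0(\GG)$ via Lemma~\ref{l:8}\ref{l:8i-}. Moreover, $\moyo{g}{\rho}$ is proper and has nonempty $\cam$: given an affine minorant $h$ of $g$, its own Moreau envelope is affine and minorizes $\moyo{g}{\rho}$. Hence Proposition~\ref{p:7} places both $L\proxcc{\gamma+\rho}g$ and $L\proxcc{\gamma}(\moyo{g}{\rho})$ in $\Gamma_0(\HH)$, while Lemma~\ref{l:8}\ref{l:8iii} handles the Moreau envelopes on each side of \ref{p:10ii}.

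For \ref{p:10i}, I would apply Lemma~\ref{l:5}\ref{l:5i} followed by Proposition~\ref{p:7}\ref{p:7ii} to the left-hand side, obtaining
\[
\bigl(\moyo{(L\proxcc{\gamma+\rho}g)}{\rho}\bigr)^*
=(L\proxcc{\gamma+\rho}g)^*+\rho\qq_{\HH}
=L\proxc{1/(\gamma+\rho)}g^*+\rho\qq_{\HH}.
\]
The same two ingredients, applied to the right-hand side, give
\[
\bigl(L\proxcc{\gamma}(\moyo{g}{\rho})\bigr)^*
=L\proxc{1/\gamma}\bigl((\moyo{g}{\rho})^*\bigr)
=L\proxc{1/\gamma}(g^*+\rho\qq_{\GG}).
\]
To reconcile these, I would invoke Proposition~\ref{p:5}\ref{p:5i} with $g^*\in\Gamma_0(\GG)$ playing the role of $g$, with $\alpha=0$, $u=0$, and with parameter $1/\gamma$ in place of $\gamma$: the resulting $\beta=(1/\gamma)/(1+\rho/\gamma)=1/(\gamma+\rho)$ is exactly what is needed, after which biconjugation closes the case.

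For \ref{p:10ii}, the conjugate of the left-hand side is again $L\proxc{1/\gamma}g^*+\gamma\qq_{\HH}$ by the same computation. For the right-hand side, I would read Definition~\ref{d:1} backwards, applied with $g^*$ in place of $g$ and $1/\gamma$ in place of $\gamma$, which gives $L\proxc{1/\gamma}g^*=(\moyo{g^{**}}{\gamma}\circ L)^*-\gamma\qq_{\HH}$, equivalently
\[
\bigl(\moyo{g^{**}}{\gamma}\circ L\bigr)^*
=L\proxc{1/\gamma}g^*+\gamma\qq_{\HH},
\]
which matches the left-hand side. No serious obstacle is anticipated: each identity, after conjugation, reduces to a short interplay of Lemma~\ref{l:5}\ref{l:5i}, Proposition~\ref{p:7}\ref{p:7ii}, and---only in (i)---the quadratic-perturbation identity Proposition~\ref{p:5}\ref{p:5i}. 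The only mild care required is verifying that $\moyo{g}{\rho}$ satisfies the propriety hypotheses under which Propositions~\ref{p:5}\ref{p:5i} and \ref{p:7} can be applied.
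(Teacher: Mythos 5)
Your proof is correct and follows essentially the same route as the paper's: both hinge on Lemma~\ref{l:5}\ref{l:5i} (equivalently Lemma~\ref{l:8}\ref{l:8iv}), Proposition~\ref{p:5}\ref{p:5i}, and the conjugation formula for the cocomposition from Definition~\ref{d:1} and Proposition~\ref{p:7}\ref{p:7ii}, the only difference being that you conjugate both sides and then biconjugate, whereas the paper runs a single chain of equalities. Your verification that $\moyo{g}{\rho}$ is proper with a continuous affine minorant is correct and supplies the one extra hypothesis this reorganization requires.
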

\begin{proof}
By Lemma~\ref{l:1}\ref{l:1iv} and Proposition~\ref{p:7}\ref{p:7i},
$L\proxc{1/\gamma}g^*\in\Gamma_0(\HH)$. Therefore,
Lemma~\ref{l:8}\ref{l:8iv} and Definition~\ref{d:1} yield
\begin{equation}
\label{e:p5}
\brk2{\brk1{L\proxc{1/\gamma}g^*}+\rho\qq_{\HH}}^*
=\moyo{\brk2{\brk1{L\proxc{1/\gamma}g^*}^*}}{\rho}
=\moyo{\brk1{L\proxcc{\gamma}g}}{\rho}.
\end{equation}

\ref{p:10i}: We combine Definition~\ref{d:1},
Lemma~\ref{l:5}\ref{l:5i}, Proposition~\ref{p:5}\ref{p:5i}, and
\eqref{e:p5} to arrive at
\begin{equation}
L\proxcc{\gamma}\brk1{\moyo{g}{\rho}}
=\brk2{L\proxc{1/\gamma}\brk1{\moyo{g}{\rho}}^*}^*
=\brk2{L\proxc{1/\gamma}\brk1{g^*+\rho\qq_{\GG}}}^*
=\brk2{\brk1{L\proxc{1/(\gamma+\rho)}g^*}+\rho\qq_{\HH}}^*
=\moyo{\brk1{L\proxcc{\gamma+\rho}g}}{\rho}.
\end{equation}

\ref{p:10ii}: Since $g^*\in\Gamma_0(\GG)$, items \ref{l:8i-}
and \ref{l:8iii} in Lemma~\ref{l:8} imply
that $\moyo{(g^{**})}{\gamma}\in\Gamma_0(\GG)$ and that
$\dom \moyo{(g^{**})}{\gamma}=\GG$. Hence,
$\moyo{(g^{**})}{\gamma}\circ L\in\Gamma_0(\HH)$ and it follows
from Lemma~\ref{l:8}\ref{l:8i-}, Definition~\ref{d:1}, and
\eqref{e:p5} that
\begin{equation}
\moyo{\brk1{g^{**}}}{\gamma}\circ L
=\brk2{\moyo{\brk1{g^{**}}}{\gamma}\circ L}^{**}
=\brk2{\brk1{L\proxc{1/\gamma}g^*}+\gamma\qq_{\GG}}^*
=\moyo{\brk1{L\proxcc{\gamma}g}}{\gamma},
\end{equation}
as announced.
\end{proof}

\begin{corollary}
\label{c:argmin}
Suppose that $L\in\BL(\HH,\GG)$ satisfies $0<\|L\|\leq 1$, let
$g\colon\GG\to\RX$ be a proper function such that $\cam g\neq\emp$,
and let $\gamma\in\RPP$. Then 
$\Argmin(L\proxcc{\gamma}g)
=\Argmin(\moyo{(g^{**})}{\gamma}\circ L)$.
\end{corollary}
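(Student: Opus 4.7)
The plan is to reduce the claim to Proposition~\ref{p:10}\ref{p:10ii} together with the standard fact that a function in $\Gamma_0(\HH)$ and its Moreau envelope share the same set of minimizers.

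First I would note that Proposition~\ref{p:7}\ref{p:7i} gives $L\proxcc{\gamma}g\in\Gamma_0(\HH)$, so the Moreau envelope machinery applies to it. The key identity I want to exploit is Proposition~\ref{p:10}\ref{p:10ii}, namely
\begin{equation*}
\moyo{\brk1{L\proxcc{\gamma}g}}{\gamma}=\moyo{\brk1{g^{**}}}{\gamma}\circ L,
\end{equation*}
so it suffices to show that $\Argmin(L\proxcc{\gamma}g)=\Argmin\moyo{(L\proxcc{\gamma}g)}{\gamma}$.

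To see that $\Argmin f=\Argmin\moyo{f}{\gamma}$ for every $f\in\Gamma_0(\HH)$, I would argue as follows. The inequality $\moyo{f}{\gamma}\leq f$ together with the fact that, for each $x\in\HH$, the infimum in the definition of $\moyo{f}{\gamma}(x)$ is bounded below by $\inf f(\HH)$ (attained at $\prox_{\gamma f}(x)$, which exists since $f\in\Gamma_0(\HH)$) yields $\inf f(\HH)=\inf\moyo{f}{\gamma}(\HH)$; the inclusion $\Argmin f\subset\Argmin\moyo{f}{\gamma}$ follows at once. For the reverse inclusion, if $x\in\Argmin\moyo{f}{\gamma}$ then Fermat's rule and Lemma~\ref{l:8}\ref{l:8v} give $0=\nabla\moyo{f}{\gamma}(x)=(x-\prox_{\gamma f}x)/\gamma$, hence $\prox_{\gamma f}x=x$; this is equivalent to $0\in\partial(\gamma f)(x)=\gamma\,\partial f(x)$, and so $x\in\Argmin f$ by Fermat's rule again.

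Chaining everything together produces
\begin{equation*}
\Argmin\brk1{L\proxcc{\gamma}g}
=\Argmin\moyo{\brk1{L\proxcc{\gamma}g}}{\gamma}
=\Argmin\brk2{\moyo{\brk1{g^{**}}}{\gamma}\circ L}.
\end{equation*}
There is no real obstacle here; the only care needed is to justify the equality of argmins of a $\Gamma_0$ function and its Moreau envelope, which is standard and was essentially the content of the argument above.
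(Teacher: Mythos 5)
Your proposal is correct and follows the same route as the paper: reduce to Proposition~\ref{p:10}\ref{p:10ii} and invoke the fact that a function in $\Gamma_0(\HH)$ and its Moreau envelope have the same minimizers (the paper simply cites \cite[Proposition~17.5]{Livre1} for this, whereas you prove it directly via $\inf f=\inf\moyo{f}{\gamma}$ and Fermat's rule with Lemma~\ref{l:8}\ref{l:8v}). Your self-contained argument for that auxiliary fact is sound.
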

\begin{proof}
Since the set of minimizers of a function in $\Gamma_0(\HH)$
coincides with that of its Moreau envelope 
\cite[Propositions~17.5]{Livre1}, the assertion follows
from Proposition~\ref{p:10}\ref{p:10ii}.
\end{proof}

\begin{corollary}
\label{c:11}
Let $\KK$ be a real Hilbert space, suppose that 
$L\in\BL(\HH,\GG)$ and $S\in\BL(\KK,\HH)$ satisfy $\|L\|\leq 1$,
$\|S\|\leq 1$, and $L\circ S\neq 0$, let $g\colon\GG\to\RX$ be a
proper function such that $\cam g\neq\emp$, and let
$\gamma\in\RPP$. Then the following hold:
\begin{enumerate}
\item
\label{c:11i}
$S\proxcc{\gamma}(L\proxcc{\gamma}g)=(L\circ S)\proxcc{\gamma}g$.
\item
\label{c:11ii}
$S\proxc{\gamma}(L\proxc{\gamma}g)=(L\circ S)\proxc{\gamma}g$.
\end{enumerate}
\end{corollary}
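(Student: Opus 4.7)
The proof proposal exploits the fact that both proximal cocompositions and proximal compositions are characterized by quantities (Moreau envelopes, or Legendre conjugates) that compose cleanly through $L$ and $S$.

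First I would record the basic consequence of the hypotheses: since $\|L\circ S\|\leq\|L\|\,\|S\|\leq 1$ and $L\circ S\neq 0$, we have $0<\|L\circ S\|\leq 1$, and likewise $\|L\|>0$ and $\|S\|>0$. This legalizes the application of Propositions~\ref{p:7} and \ref{p:10} to each of the three operators $L$, $S$, and $L\circ S$.

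For assertion~\ref{c:11i}, the plan is to compute the Moreau envelope of both sides and invoke the uniqueness statement in Lemma~\ref{l:6}. Set $h=L\proxcc{\gamma}g$. By Proposition~\ref{p:7}\ref{p:7i}, $h\in\Gamma_0(\HH)$, so $h^{**}=h$ by Lemma~\ref{l:8}\ref{l:8i-}, $h$ is proper, and $\cam h\neq\emp$ by Lemma~\ref{l:8}\ref{l:8i--}. Applying Proposition~\ref{p:10}\ref{p:10ii} three times, to $(L,g)$, to $(S,h)$, and to $(L\circ S,g)$, yields
\begin{equation}
\moyo{\brk1{S\proxcc{\gamma}h}}{\gamma}
=\moyo{h}{\gamma}\circ S
=\moyo{\brk1{g^{**}}}{\gamma}\circ L\circ S
=\moyo{\brk1{\brk1{L\circ S}\proxcc{\gamma}g}}{\gamma}.
\end{equation}
Both $S\proxcc{\gamma}h$ and $(L\circ S)\proxcc{\gamma}g$ belong to $\Gamma_0(\HH)$ by Proposition~\ref{p:7}\ref{p:7i}, so Lemma~\ref{l:6} gives equality.

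For assertion~\ref{c:11ii}, I would reduce to \ref{c:11i} via Legendre duality rather than repeat the envelope calculation. By Proposition~\ref{p:1}\ref{p:1iii}, $(L\proxc{\gamma}g)^*=L\proxcc{1/\gamma}g^*$, and Lemma~\ref{l:1}\ref{l:1iv} ensures $g^*\in\Gamma_0(\GG)$, which is proper with $\cam g^*\neq\emp$. Applying Proposition~\ref{p:7}\ref{p:7iii} first to $S$ and then part~\ref{c:11i} (with $g^*$ and parameter $1/\gamma$), and finally Proposition~\ref{p:7}\ref{p:7iii} to $L\circ S$, I obtain
\begin{equation}
S\proxc{\gamma}\brk1{L\proxc{\gamma}g}
=\brk2{S\proxcc{1/\gamma}\brk1{L\proxcc{1/\gamma}g^*}}^*
=\brk2{\brk1{L\circ S}\proxcc{1/\gamma}g^*}^*
=\brk1{L\circ S}\proxc{\gamma}g.
\end{equation}

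The main obstacle is purely bookkeeping: verifying at every step that the input function is proper with $\cam\neq\emp$ (so that Propositions~\ref{p:7} and \ref{p:10} apply) and that the operator in question has norm in $\zeroun$. Once the strict inequality $0<\|L\circ S\|$ is extracted from the hypothesis $L\circ S\neq 0$, the two identities reduce to a single line each.
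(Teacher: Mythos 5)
Your proposal is correct and follows essentially the same route as the paper: part (i) by computing Moreau envelopes via Proposition~\ref{p:10}\ref{p:10ii} and concluding with the uniqueness statement of Lemma~\ref{l:6}, and part (ii) by dualizing through Propositions~\ref{p:7}\ref{p:7iii} and \ref{p:1}\ref{p:1iii} to reduce to part (i). The only slip is cosmetic: $S\proxcc{\gamma}h$ and $(L\circ S)\proxcc{\gamma}g$ live in $\Gamma_0(\KK)$, not $\Gamma_0(\HH)$.
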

\begin{proof}
\ref{c:11i}:
Set $f=L\proxcc{\gamma}g$. Since
$\|L\circ S\|\leq\|L\|\,\|S\|\leq 1$, we deduce from 
Proposition~\ref{p:7}\ref{p:7i} that $f\in\Gamma_0(\HH)$,
$S\proxcc{\gamma}f\in\Gamma_0(\KK)$, and 
$(L\circ S)\proxcc{\gamma}g\in\Gamma_0(\KK)$. By
Lemma~\ref{l:8}\ref{l:8i-}, $f^{**}=f$. Hence, 
Proposition~\ref{p:10}\ref{p:10ii} yields 
\begin{align}
\label{e:p317}
\moyo{\brk1{S\proxcc{\gamma}f}}{\gamma}
=\moyo{\brk1{f^{**}}}{\gamma}\circ S 
=\moyo{f}{\gamma}\circ S
=\brk2{\moyo{\brk1{g^{**}}}{\gamma}\circ L}\circ S
=\moyo{\brk2{\brk1{L\circ S}\proxcc{\gamma}g}}{\gamma}.
\end{align}
Therefore, the assertion follows from Lemma~\ref{l:6}.

\ref{c:11ii}:
By Proposition~\ref{p:7}\ref{p:7i},
$L\proxc{\gamma}g\in\Gamma_0(\HH)$,
$S\proxc{\gamma}(L\proxc{\gamma}g)\in\Gamma_0(\KK)$, and
$(L\circ S)\proxc{\gamma}g\in\Gamma_0(\KK)$. Therefore,
using Propositions~\ref{p:7}\ref{p:7iii} and
\ref{p:1}\ref{p:1iii}, together with \ref{c:11i}, we get
\begin{equation}
\label{e:p317b}
S\proxc{\gamma}\brk1{L\proxc{\gamma}g}
=\brk2{S\proxcc{1/\gamma}\brk1{L\proxc{\gamma}g}^*}^*
=\brk2{S\proxcc{1/\gamma}\brk1{L\proxcc{1/\gamma}g^*}}^*
=\brk2{\brk1{L\circ S}\proxcc{1/\gamma}g^*}^*
=\brk1{L\circ S}\proxc{\gamma}g,
\end{equation}
which completes the proof.
\end{proof}

\begin{proposition}
\label{p:13}
Suppose that $L\in\BL(\HH,\GG)$ satisfies $0<\|L\|\leq 1$, let
$g\colon\GG\to\RX$ be a proper function such that $\cam g\neq\emp$,
and let $\gamma\in\RPP$. Then
$\rec(L\proxcc{\gamma}g)=(\rec(g^{**}))\circ L$.
\end{proposition}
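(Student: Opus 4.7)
The plan is to chain together the duality between recession functions and support functions (Lemma~\ref{l:8}\ref{l:8vi}) with the conjugacy formula from Proposition~\ref{p:7}\ref{p:7ii} and the domain formula from Proposition~\ref{p:4}\ref{p:4iii}.

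First I would verify the setup: since $0<\|L\|\leq 1$ and $g$ is proper with $\cam g\neq\emp$, Proposition~\ref{p:7}\ref{p:7i} gives $L\proxcc{\gamma}g\in\Gamma_0(\HH)$, and Lemmas~\ref{l:1}\ref{l:1iv} and \ref{l:8}\ref{l:8i-} give $g^{**}\in\Gamma_0(\GG)$ with $g^{***}=g^*$. This ensures the recession-support duality of Lemma~\ref{l:8}\ref{l:8vi} applies in both spaces.

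Next, applying Lemma~\ref{l:8}\ref{l:8vi} to $L\proxcc{\gamma}g\in\Gamma_0(\HH)$ and then invoking Proposition~\ref{p:7}\ref{p:7ii}, I obtain
\begin{equation}
\rec\brk1{L\proxcc{\gamma}g}=\sigma_{\dom\brk1{L\proxcc{\gamma}g}^*}
=\sigma_{\dom\brk1{L\proxc{1/\gamma}g^*}}.
\end{equation}
Proposition~\ref{p:4}\ref{p:4iii}, applied to $g^*$ with parameter $1/\gamma$, then yields $\dom(L\proxc{1/\gamma}g^*)=L^*(\dom g^{***})=L^*(\dom g^*)$. Combining this with the elementary identity $\sigma_{L^*(C)}=\sigma_C\circ L$, which follows at once from $\scal{L^*y}{x}_{\HH}=\scal{y}{Lx}_{\GG}$, I get $\rec(L\proxcc{\gamma}g)=\sigma_{\dom g^*}\circ L$.

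Finally, Lemma~\ref{l:8}\ref{l:8vi} applied to $g^{**}\in\Gamma_0(\GG)$ gives $\rec(g^{**})=\sigma_{\dom g^{***}}=\sigma_{\dom g^*}$, so composing with $L$ yields the desired equality. There is no real obstacle here: the argument is essentially a bookkeeping exercise in Legendre–Fenchel duality, and the main thing to watch is simply that every function appearing on each side lies in the right class of $\Gamma_0$ so that the duality tools apply cleanly.
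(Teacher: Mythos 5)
Your proof is correct and follows essentially the same route as the paper: the identical chain $\rec(L\proxcc{\gamma}g)=\sigma_{\dom(L\proxcc{\gamma}g)^*}=\sigma_{\dom(L\proxc{1/\gamma}g^*)}=\sigma_{L^*(\dom g^{*})}=\sigma_{\dom g^*}\circ L=(\rec(g^{**}))\circ L$, using Lemma~\ref{l:8}\ref{l:8vi}, Proposition~\ref{p:7}\ref{p:7ii}, and Proposition~\ref{p:4}\ref{p:4iii}. No issues.
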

\begin{proof}
By Lemmas~\ref{l:1}\ref{l:1iv} and \ref{l:8}\ref{l:8i-},
$g^*\in\Gamma_0(\GG)$ and $g^{**}\in\Gamma_0(\GG)$. Therefore,
Lemma~\ref{l:8}\ref{l:8vi}, Propositions~\ref{p:7}\ref{p:7ii} and
\ref{p:4}\ref{p:4iii}, and Lemma~\ref{l:1}\ref{l:1iii}
imply that
\begin{equation}
\rec\brk1{L\proxcc{\gamma}g}
=\sigma_{\dom\brk1{L\proxcc{\gamma}g}^*}
=\sigma_{\dom\brk1{L\proxc{1/\gamma}g^*}}
=\sigma_{L^*(\dom g^{***})}=\sigma_{\dom g^*}\circ L
=\brk1{\rec(g^{**})}\circ L,
\end{equation}
as claimed.
\end{proof}

\begin{proposition}
\label{p:16}
Suppose that $L\in\BL(\HH,\GG)$ satisfies $0<\|L\|\leq 1$, let
$g\in\Gamma_0(\GG)$, let
\begin{equation}
\widetilde{g}\colon\GG\oplus\RR\to\RX\colon(y,\eta)\mapsto
\begin{cases}
\eta g(y/\eta),&\text{if}\;\;\eta>0;\\
\brk1{\rec g}(y),&\text{if}\;\;\eta=0;\\
\pinf,&\text{otherwise}
\end{cases}
\end{equation}
be its perspective, let $\gamma\in\RPP$, and set
$\widetilde{L}\colon\HH\oplus\RR\to\GG\oplus\RR\colon(x,\xi)
\mapsto(Lx,\xi)$. Then 
\begin{equation}
\label{e:p16}
\widetilde{L\proxcc{\gamma}g}\colon\HH\oplus\RR\to\RX\colon(x,\xi)
\mapsto
\begin{cases}
\brk2{\widetilde{L}\:\proxcc{\xi\gamma}\:\widetilde{g}\,}
(x,\xi),&\text{if}\quad\xi>0;\\
\brk1{\rec g}(Lx),&\text{if}\quad\xi=0;\\
\pinf,&\text{otherwise}.
\end{cases}
\end{equation}
\end{proposition}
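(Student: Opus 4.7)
The plan is to split the argument according to the sign of $\xi$. For $\xi<0$, both sides of \eqref{e:p16} equal $\pinf$ by the very definition of the perspective, so the identity is immediate. For $\xi=0$, I would combine Proposition~\ref{p:13}, which gives $\rec(L\proxcc{\gamma}g)=(\rec g^{**})\circ L$, with the identity $g^{**}=g$ from Lemma~\ref{l:8}\ref{l:8i-} (applicable since $g\in\Gamma_0(\GG)$). This yields $\widetilde{L\proxcc{\gamma}g}(x,0)=(\rec g)(Lx)$, matching the right-hand side of \eqref{e:p16}.

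The substantive case is $\xi>0$, where the target equality reads $\xi(L\proxcc{\gamma}g)(x/\xi)=(\widetilde{L}\proxcc{\xi\gamma}\widetilde{g})(x,\xi)$. The plan is to rewrite the right-hand side via Proposition~\ref{p:4}\ref{p:4ii}. Setting $\Phi=\qq_{\GG}-\qq_{\HH}\circ L^*$ and $\widetilde{\Phi}=\qq_{\GG\oplus\RR}-\qq_{\HH\oplus\RR}\circ\widetilde{L}^*$, the identity $\widetilde{L}^*(y,\eta)=(L^*y,\eta)$ yields $\widetilde{\Phi}(y,\eta)=\Phi(y)$, a function of the first coordinate alone. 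I would then record that $\widetilde{g}$ is proper with $\cam\widetilde{g}\neq\emp$ (indeed, the perspective of a member of $\Gamma_0(\GG)$ lies in $\Gamma_0(\GG\oplus\RR)$, and Lemma~\ref{l:8}\ref{l:8i--} applies) and that $\|\widetilde{L}\|\leq 1$ is immediate from $\|L\|\leq 1$. Proposition~\ref{p:4}\ref{p:4ii} then delivers $(\widetilde{L}\proxcc{\xi\gamma}\widetilde{g})(x,\xi)=(\widetilde{g}^*+\xi\gamma\widetilde{\Phi})^*(Lx,\xi)$.

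The final step is to compute $\widetilde{g}^*$ explicitly. Using the change of variables $z=y/\eta$ in the defining supremum and handling the boundary case via $\rec g=\sigma_{\dom g^*}$, a direct derivation yields $\widetilde{g}^*(y^*,\eta^*)=0$ if $g^*(y^*)+\eta^*\leq 0$ and $\pinf$ otherwise. Substituting this formula, using that $\widetilde{\Phi}$ is independent of its second variable, and exploiting $\xi>0$ to attain the inner supremum over $\eta^*$ at $\eta^*=-g^*(y^*)$, the outer expression collapses to $\sup_{y^*\in\GG}\bigl(\scal{y^*}{Lx}_{\GG}-\xi(g^*(y^*)+\gamma\Phi(y^*))\bigr)$. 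By Lemma~\ref{l:2}\ref{l:2i}, this equals $\xi(g^*+\gamma\Phi)^*(Lx/\xi)$, and a second appeal to Proposition~\ref{p:4}\ref{p:4ii} identifies the latter as $\xi(L\proxcc{\gamma}g)(x/\xi)$, closing the case.

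The principal obstacle is the explicit computation of $\widetilde{g}^*$: it is folklore in convex analysis, but since it is not among the cited lemmas, a short self-contained derivation would be needed. A secondary delicate point is the bookkeeping of the scaling parameter: one must carefully track that the cocomposition index on the right is $\xi\gamma$ rather than $\gamma$, which emerges naturally from Lemma~\ref{l:2}\ref{l:2i} after factoring $\xi$ out of $\xi(g^*+\gamma\Phi)$.
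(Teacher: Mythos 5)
Your proposal is correct and follows essentially the same route as the paper: the same case split on $\xi$, the same reduction of the case $\xi=0$ to Proposition~\ref{p:13}, and for $\xi>0$ the same key ingredients, namely the identity $(\widetilde{g}\,)^*=\iota_{\{(y^*,\eta):\,\eta+g^*(y^*)\leq 0\}}$ (which the paper imports from \cite[Proposition~2.3]{Svva18} rather than rederiving), the observation that $\widetilde{\Phi}$ depends only on the first coordinate, and two applications of Proposition~\ref{p:4}\ref{p:4ii}. The only cosmetic difference is the final bookkeeping: you factor out $\xi$ via Lemma~\ref{l:2}\ref{l:2i} to land directly on $\xi(g^*+\gamma\Phi)^*(Lx/\xi)$, whereas the paper passes through $L\proxcc{\xi\gamma}(\xi g(\cdot/\xi))$ and the scaling identities of Proposition~\ref{p:1}\ref{p:1viii}--\ref{p:1ix}; both are valid.
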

\begin{proof}
Let $(x,\xi)\in\HH\oplus\RR$, set
$\Phi=\qq_{\GG}-\qq_{\HH}\circ L^*$, and set
$\Psi=\qq_{\GG\oplus\RR}-\qq_{\HH\oplus\RR}\circ\widetilde{L}\:^*$.
We consider two cases.
\begin{itemize}
\item
\label{p:16i}
$\xi=0$: It follows from Proposition~\ref{p:13} and
Lemma~\ref{l:8}\ref{l:8i--}--\ref{l:8i-}
that $(\widetilde{L\proxcc{\gamma}g})(x,0)
=(\rec(L\proxcc{\gamma}g))(x)=(\rec g)(Lx)$.
\item
\label{p:16ii}
$\xi>0$: Set 
$C=\menge{(y^*,\eta)\in\GG\oplus\RR}{\eta+g^*(y^*)\leq 0}$. Then
\cite[Items (ii) and (iv) in Proposition~2.3]{Svva18} assert that 
$\widetilde{g}\in\Gamma_0(\GG\oplus\RR)$ and
$(\widetilde{g}\,)^*=\iota_C$. Therefore, by
Lemma~\ref{l:2}\ref{l:2ii},
\begin{align}
\label{e:p16a}
(\forall y^*\in\GG)\quad
\sup_{\eta\in\RR}\brk1{\eta\xi-(\widetilde{g}\,)^*(y^*,\eta)}
&=\sup_{\eta\in\RR}\brk1{\eta\xi-\iota_C(y^*,\eta)}\nonumber\\
&=\sup_{\eta\in\left]\minf,-g^*(y^*)\right]}
\eta\xi\nonumber\\
&=-\xi g^*(y^*)\nonumber\\
&=-\brk1{\xi g(\cdot/\xi)}^*(y^*).
\end{align}
On the other hand, for every $\eta\in\RR$,
$\Psi(\cdot,\eta)=\Phi$ and, since $0<\|L\|\leq 1$, we have
$0<\|\widetilde{L}\|\leq 1$. Hence, appealing to
Proposition~\ref{p:4}\ref{p:4ii}, \eqref{e:p16a}, and
Proposition~\ref{p:1}\ref{p:1viii}--\ref{p:1ix},
\begin{align}
\label{e:p16b}
\brk2{\widetilde{L}\:\proxcc{\xi\gamma}\:\widetilde{g}\,}(x,\xi)
&=\brk1{(\widetilde{g}\,)^*+\xi\gamma\Psi}^*
\brk1{\widetilde{L}(x,\xi)}\nonumber\\
&=\brk1{(\widetilde{g}\,)^*+\xi\gamma\Psi}^*(Lx,\xi)\nonumber\\
&=\sup_{(y^*,\eta)\in\GG\oplus\RR}
\brk1{\scal{(Lx,\xi)}{(y^*,\eta)}_{\GG\oplus\RR}
-(\widetilde{g}\,)^*(y^*,\eta)-\xi\gamma\Psi(y^*,\eta)}\nonumber\\
&=\sup_{(y^*,\eta)\in\GG\oplus\RR}\brk1{\eta\xi+
\scal{Lx}{y^*}_{\GG}
-(\widetilde{g}\,)^*(y^*,\eta)-\xi\gamma\Phi(y^*)}\nonumber\\
&=\sup_{y^*\in\GG}\brk2{\scal{Lx}{y^*}_{\GG}-\xi\gamma\Phi(y^*)
+\sup_{\eta\in\RR}\brk1{\eta\xi-(\widetilde{g}\,)^*(y^*,\eta)}}
\nonumber\\
&=\sup_{y^*\in\GG}\brk1{\scal{Lx}{y^*}_{\GG}-\xi\gamma\Phi(y^*)
-\brk1{\xi g(\cdot/\xi)}^*(y^*)}\nonumber\\
&=\brk2{\brk1{\xi g(\cdot/\xi)}^*+\xi\gamma\Phi}^*(Lx)\nonumber\\
&=\brk2{L\proxcc{\xi\gamma}\brk1{\xi g(\cdot/\xi)}}(x)\nonumber\\
&=\xi\brk1{L\proxcc{\gamma}g}(x/\xi)\nonumber\\
&=\brk2{\,\widetilde{L\proxcc{\gamma}g}\,}(x,\xi).
\end{align}
\end{itemize}
We have thus proved \eqref{e:p16}.
\end{proof}

\subsection{Comparison with standard compositions and infimal
postcompositions}

As mentioned in Section~\ref{sec:1}, our discussion involves
several ways to compose a function defined on $\GG$ with a linear
operator from $\HH$ to $\GG$ in order to obtain a function defined
on $\HH$: the standard composition \eqref{e:1}, the infimal
postcomposition \eqref{e:2}, and the proximal composition and
cocomposition of Definition~\ref{d:1}. We saw in
Proposition~\ref{p:17} that a numerical advantage of the proximal
compositions is that their proximity operators are easily
decomposable in terms of that of the underlying function. Our
purpose here is to compare these various compositions.

\begin{figure}[htbp]
\centering
\includegraphics[width=11.1cm]{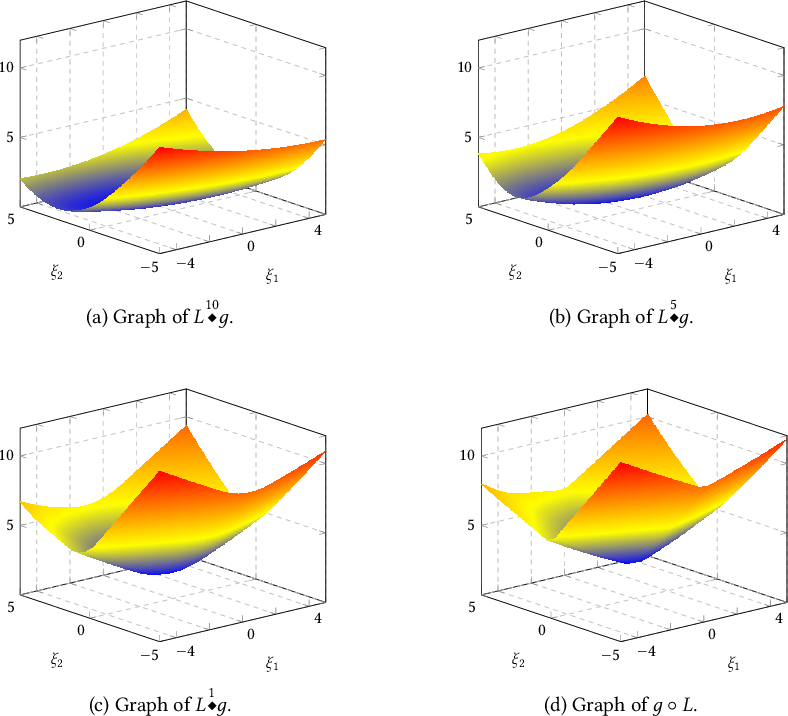}
\caption{Graphs of the proximal cocomposition and of the standard
composition in Example~\ref{ex:1}.}
\label{fig:1}
\end{figure}

\begin{figure}[htbp]
\centering
\includegraphics[width=11.1cm]{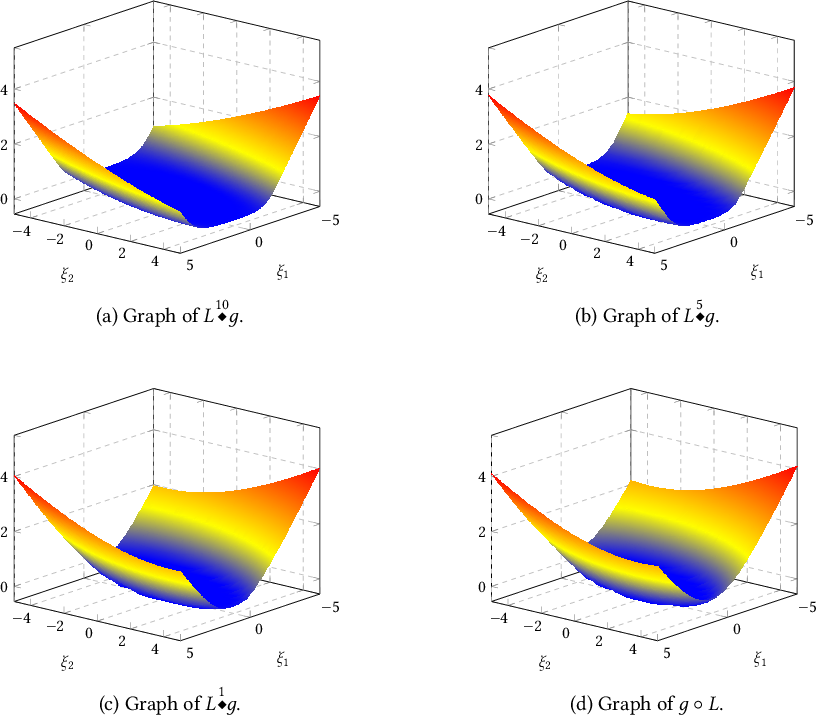}
\caption{Graphs of the proximal cocomposition and of the standard
composition in Example~\ref{ex:2}.}
\label{fig:2}
\end{figure}

\begin{example}
\label{ex:1}
Let 
\begin{equation}
\begin{cases}
L\colon\RR^2\to\RR^5\colon
(\xi_1,\xi_2)\mapsto\brk1{0.5\xi_2,-0.5\xi_1,-0.5\xi_2,
0.3\xi_1+0.4\xi_2,0.1\xi_1-0.3\xi_2}\\
g\colon\RR^5\to\RR\colon(\eta_1,\eta_2,\eta_3,\eta_4,\eta_5)\mapsto
\|(\eta_1,\eta_2,\eta_3)\|_1+\|(\eta_4-1,\eta_5+2)\|.
\end{cases}
\end{equation}
Figure~\ref{fig:1} shows the graphs of both the standard
composition and proximal cocomposition for various values of
$\gamma$. 
\end{example}

\begin{example}
\label{ex:2}
Let $C=B(0;2)$ and 
\begin{equation}
\begin{cases}
L\colon\RR^2\to\RR^3\colon(\xi_1,\xi_2)\mapsto
\brk1{0.7\xi_1+0.1\xi_2,-0.3\xi_1+0.4\xi_2,0.5\xi_1-0.3\xi_2}\\
g\colon\RR^3\to\RR\colon(\eta_1,\eta_2,\eta_3)\mapsto
d_C(\eta_1,\eta_2,\eta_3).
\end{cases}
\end{equation}
Figure~\ref{fig:2} shows the graphs of both the standard
composition and proximal cocomposition for various values of
$\gamma$. 
\end{example}

As we now show, the pointwise orderings suggested by 
Figures~\ref{fig:1} and \ref{fig:2} are generally true.

\begin{proposition}
\label{p:20}
Suppose that $L\in\BL(\HH,\GG)$ satisfies $0<\|L\|\leq 1$, let
$g\colon\GG\to\RX$ be a proper function such that $\cam g\neq\emp$,
and let $\gamma\in\RPP$. Then the following hold:
\begin{enumerate}
\item
\label{p:20i}
$L^*\pushfwd g^{**}\leq L\proxc{\gamma}g$.
\item
\label{p:20ii}
$\moyo{(g^{**})}{\gamma}\circ L\leq
L\proxcc{\gamma}g\leq g^{**}\circ L$.
\item
\label{p:20iii}
$L\proxcc{\gamma}g\leq L\proxc{\gamma}g$.
\item
\label{p:20iv}
Suppose that $L$ is an isometry. Then 
$L\proxc{\gamma}g=L\proxcc{\gamma}g$.
\item
\label{p:20v}
Suppose that $L$ is a coisometry. Then 
$L\proxc{\gamma}g=L^*\epushfwd g^{**}$ and~
$L\proxcc{\gamma}g=g^{**}\circ L$.
\item
\label{p:20vi}
Suppose that $L$ is invertible with $L^{-1}=L^*$. Then
$L\proxc{\gamma}g=L^*\epushfwd g^{**}=g^{**}\circ L
=L\proxcc{\gamma}g$.
\end{enumerate}
\end{proposition}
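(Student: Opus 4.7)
The backbone of the proof is the pair of explicit descriptions $L\proxc{\gamma}g=L^{*}\epushfwd(g^{**}+\Phi/\gamma)$ and $L\proxcc{\gamma}g=(g^{*}+\gamma\Phi)^{*}\circ L$ provided by Proposition~\ref{p:4}, where $\Phi=\qq_{\GG}-\qq_{\HH}\circ L^{*}$. The hypothesis $\|L\|\leq 1$ places $\Phi$ in $\Gamma_0(\GG)$ via Lemma~\ref{l:7} and, more usefully, yields $\Phi\geq 0$ since $\|L^{*}y\|_{\HH}\leq\|y\|_{\GG}$. Items~\ref{p:20i}, \ref{p:20ii}, and \ref{p:20v} will then collapse to monotonicity statements. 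Dropping the nonnegative term $\Phi/\gamma$ inside the infimal postcomposition gives \ref{p:20i}; the inequality $g^{*}\leq g^{*}+\gamma\Phi$ together with Lemma~\ref{l:1}\ref{l:1ii} produces $(g^{*}+\gamma\Phi)^{*}\leq g^{**}$ and hence the upper half of \ref{p:20ii}, whose lower half is exactly Proposition~\ref{p:4}\ref{p:4iv}; and a coisometry satisfies $LL^{*}=\Id_{\GG}$, forcing $\Phi\equiv 0$ and reducing the two defining formulas to the identities announced in \ref{p:20v}.

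For \ref{p:20iii}, my plan is to recast $L\proxcc{\gamma}g(x)$ as the infimal convolution $(g^{**}\infconv(\Phi^{*}/\gamma))(Lx)$, an identity already extracted inside the proof of Proposition~\ref{p:4}\ref{p:4iii+} from $\dom\Phi=\GG$. Given any admissible $y$ with $L^{*}y=x$ in the formula for $L\proxc{\gamma}g(x)$, I would test that infimal convolution at the specific point $z=Lx-y=-(\Id_{\GG}-LL^{*})y=-\nabla\Phi(y)$. Because $\Phi$ is an even quadratic form, $\nabla\Phi(y)\in\partial\Phi(y)$ and the Young--Fenchel equality yields $\Phi^{*}(-\nabla\Phi(y))=\Phi^{*}(\nabla\Phi(y))=\Phi(y)$, so the summand $g^{**}(Lx-z)+\Phi^{*}(z)/\gamma$ collapses to $g^{**}(y)+\Phi(y)/\gamma$; taking the infimum over $y$ then delivers $L\proxcc{\gamma}g(x)\leq L\proxc{\gamma}g(x)$.

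Item~\ref{p:20iv} refines this picture: $L^{*}L=\Id_{\HH}$ makes $P=LL^{*}$ the orthogonal projection onto $\ran L$, so $\Id-P$ is the orthogonal projection onto $(\ran L)^{\perp}=\ker L^{*}$, whence $\Phi=\|(\Id-P)\Cdot\|_{\GG}^{2}/2$ and Lemma~\ref{l:10} delivers $\Phi^{*}=\iota_{(\ran L)^{\perp}}+\qq_{\GG}$. Parametrising the level set $\{y\in\GG\colon L^{*}y=x\}$ as $Lx+(\ran L)^{\perp}$ rewrites $L\proxc{\gamma}g(x)$ as $\inf_{w\in(\ran L)^{\perp}}(g^{**}(Lx+w)+\|w\|_{\GG}^{2}/(2\gamma))$, while the substitution $w=-z$ transforms the infimal convolution formula for $L\proxcc{\gamma}g(x)$ into the same expression, so \ref{p:20iv} follows; \ref{p:20vi} is then immediate from \ref{p:20iv} and \ref{p:20v}, because $L^{-1}=L^{*}$ makes $L$ simultaneously an isometry and a coisometry. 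The main subtlety I expect is in \ref{p:20iii}: the choice $z=-\nabla\Phi(y)$ is the clever move that makes the two sides align term by term, and one must invoke the unconditional Young--Fenchel identity $\Phi^{*}(\nabla\Phi(y))=\Phi(y)$ (valid because $\Phi$ is a globally finite quadratic form) together with the applicability of the infimal convolution formula even in the borderline regime $\|L\|=1$.
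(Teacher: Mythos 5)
Your proposal is correct, and for items \ref{p:20iii} and \ref{p:20iv} it takes a genuinely different route from the paper. Items \ref{p:20i}, \ref{p:20v}, and \ref{p:20vi} coincide with the paper's argument (drop the nonnegative term $\Phi/\gamma$; note $\Phi\equiv 0$ for a coisometry; combine), and your upper bound in \ref{p:20ii} --- conjugating $g^*\leq g^*+\gamma\Phi$ and invoking Proposition~\ref{p:4}\ref{p:4ii} --- is an equally short variant of the paper's, which instead dualizes \ref{p:20i} applied to $g^*$. The real divergence is in \ref{p:20iii}: the paper works at $\gamma=1$ entirely on the conjugate side, using the Moreau decomposition $\qq_{\GG}-\moyo{(g^{**})}{1}=\moyo{(g^{*})}{1}$ together with the identity $(f^*-\qq_{\HH})^*=(\qq_{\HH}-f)^*-\qq_{\HH}$ and the monotonicity of conjugation applied to $\qq_{\GG}\circ L\leq\qq_{\HH}$, then rescales via Proposition~\ref{p:1}\ref{p:1vi} and \ref{p:1viii}; you instead argue primally, testing $g^{**}\infconv(\Phi^*/\gamma)$ at $z=-\nabla\Phi(y)$ for each feasible $y$ and using $\Phi^*(\nabla\Phi(y))=\Phi(y)$. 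Your version is more transparent about \emph{why} the inequality holds and, pleasantly, only needs the cost-free half $(g^*+\gamma\Phi)^*\leq g^{**}\infconv\brk1{\Phi^*/\gamma}$ of the sum--conjugate formula, so the borderline case $\|L\|=1$ that worries you is actually harmless there (do note that the chain $\Phi^*(-\nabla\Phi(y))=\Phi^*(\nabla\Phi(y))=\Phi(y)$ uses the evenness of $\Phi^*$ and the Euler identity $\scal{y}{\nabla\Phi(y)}_{\GG}=2\Phi(y)$ on top of the Fenchel equality, as your closing parenthetical acknowledges). For \ref{p:20iv} the paper simply observes that $\qq_{\GG}\circ L=\qq_{\HH}$ turns the inequality in \ref{p:20iii} into an equality, which is a one-liner; your explicit identification of both sides with $\inf_{w\in(\ran L)^{\bot}}\brk1{g^{**}(Lx+w)+\|w\|_{\GG}^2/(2\gamma)}$ via Lemma~\ref{l:10} is longer and does require the exact identity $(g^*+\gamma\Phi)^*=g^{**}\infconv(\Phi^*/\gamma)$ (available from the proof of Proposition~\ref{p:4}\ref{p:4iii+}, and reused in Theorem~\ref{t:45} even when $\|L\|=1$), but it yields a concrete formula for both compositions in the isometric case that the paper's proof does not exhibit.
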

\begin{proof}
Set $\Phi=\qq_{\GG}-\qq_{\HH}\circ L^*$ and observe that
$0\leq\Phi\leq\qq_{\GG}$.

\ref{p:20i}: Let $x\in\HH$. By Proposition~\ref{p:4}\ref{p:4i},
\begin{equation}
\brk1{L\proxc{\gamma}g}(x)
=\min_{\substack{y\in\GG\\L^*y=x}}\brk3{g^{**}(y)
+\dfrac{1}{\gamma}\Phi(y)}
\geq\inf_{\substack{y\in\GG\\L^*y=x}}g^{**}(y)
=\brk1{L^*\pushfwd g^{**}}(x).
\end{equation}

\ref{p:20ii}: The leftmost inequality is established in
Proposition~\ref{p:4}\ref{p:4iv}. Let us prove rightmost
inequality. By Lemma~\ref{l:1}\ref{l:1ii} and \ref{p:20i},
$(L\proxc{1/\gamma}g^*)^*\leq(L^*\pushfwd g^{***})^*$. It
therefore follows from Definition~\ref{d:1} and
Lemmas~\ref{l:1}\ref{l:1iii} and \ref{l:5}\ref{l:5ii} that
\begin{equation}
L\proxcc{\gamma}g=\brk1{L\proxc{1/\gamma}g^*}^*
\leq(L^*\pushfwd g^*)^*=g^{**}\circ L.
\end{equation}

\ref{p:20iii}: Set $f=\moyo{(g^{**})}{1}\circ L$. Since
$\|L\|\leq 1$, $\qq_{\GG}\circ L\leq\qq_{\HH}$, and we deduce from
Lemma~\ref{l:1}\ref{l:1ii} that $(\qq_{\HH}-f)^*\leq
(\qq_{\GG}\circ L-f)^*$. However, it results from
Lemma~\ref{l:8}\ref{l:8ii+} that
$\qq_{\GG}\circ L-f=(\qq_{\GG}-\moyo{(g^{**})}{1})\circ L
=\moyo{(g^*)}{1}\circ L$. Altogether, it follows from 
Definition~\ref{d:1} and \cite[Proposition~13.29]{Livre1} that
\begin{equation}
\label{e:10}
L\proxcc{1}g=\brk1{f^*-\qq_{\HH}}^*=\brk1{\qq_{\HH}-f}^*-\qq_{\HH}
\leq\brk1{\moyo{(g^*)}{1}\circ L}^*-\qq_{\HH}
=L\proxc{1}{g}.
\end{equation}
Hence, by Proposition~\ref{p:1}\ref{p:1viii}, \eqref{e:10}, and
Proposition~\ref{p:1}\ref{p:1vi}, we get
\begin{equation}
L\proxcc{\gamma}g=\dfrac{1}{\gamma}\brk1{L\proxcc{1}(\gamma g)}
\leq\dfrac{1}{\gamma}\brk1{L\proxc{1}(\gamma g)}
=L\proxc{\gamma}g.
\end{equation}

\ref{p:20iv}: Here $\qq_{\HH}=\qq_{\GG}\circ L$ and therefore the
inequalities in the proof of \ref{p:20iii} can be replaced with
equalities.

\ref{p:20v}: Here $\qq_{\GG}=\qq_{\HH}\circ L^*$ and thus $\Phi=0$.
Therefore, the result follows from
Proposition~\ref{p:4}\ref{p:4i}--\ref{p:4ii}.

\ref{p:20vi}: A consequence of \ref{p:20iv} and \ref{p:20v}.
\end{proof}

\begin{remark}
Suppose that $L\in\BL(\HH,\GG)$ is an isometry, let
$g\colon\GG\to\RX$ be a proper function such that $\cam g\neq\emp$,
and let $\gamma\in\RPP$. Then we recover from
\cite[Proposition~13.24(v)]{Livre1} as well as items
\ref{p:20i}, \ref{p:20iv}, and \ref{p:20ii} in  
Proposition~\ref{p:20} the inequalities
\begin{equation}
(g^*\circ L)^*\leq L^*\pushfwd g^{**}\leq L\proxc{\gamma}g
=L\proxcc{\gamma}g\leq g^{**}\circ L,
\end{equation}
which appear in \cite[Proposition~5.4]{Svva23} in the special
case in which $\gamma=1$.
\end{remark}

\begin{remark}
Suppose that $L\in\BL(\HH,\GG)$ is a coisometry, let
$g\colon\GG\to\RX$ be a proper function such that $\cam g\neq\emp$,
and let $\gamma\in\RPP$. Then Propositions~\ref{p:20}\ref{p:20v}
and \ref{p:10}\ref{p:10ii} imply that
\begin{equation}
\label{e:337}
\moyo{\brk1{(g^{**})\circ L}}{\gamma}
=\moyo{\brk1{L\proxcc{\gamma}g}}{\gamma}
=\moyo{(g^{**})}{\gamma}\circ L.
\end{equation}
In particular, when $g\in\Gamma_0(\GG)$, we recover the fact that
$\moyo{(g\circ L)}{\gamma}=\moyo{g}{\gamma}\circ L$ 
(see \cite[Lemma~3]{Yama23}).
\end{remark}

\begin{proposition} 
\label{p:25} 
Suppose that $L\in\BL(\HH,\GG)$ satisfies $0<\|L\|\leq 1$, let
$g\colon\GG\to\RX$ be a proper function such that $\cam g\neq\emp$,
let $\gamma\in\RPP$, let $x\in\HH$, and set 
$\Phi=\qq_{\GG}-\qq_{\HH}\circ L^*$. Then the following hold:
\begin{enumerate}
\item
\label{p:25i}
Suppose that $y^*\in\partial g(Lx)$. Then
$0\leq g(Lx)-(L\proxcc{\gamma}g)(x)
\leq\gamma\Phi(y^*)$.
\item
\label{p:25ii}
Suppose that $0\in(\Id_{\GG}-L\circ L^*)(\partial g(Lx))$. Then
$(L\proxcc{\gamma}g)(x)=g(Lx)$.
\end{enumerate}
\end{proposition}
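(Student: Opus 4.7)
The plan is to read both inequalities of \ref{p:25i} off the formula $(L\proxcc{\gamma}g)(x)=(g^*+\gamma\Phi)^*(Lx)$ provided by Proposition~\ref{p:4}\ref{p:4ii}, and then to deduce \ref{p:25ii} from \ref{p:25i} by showing that $\Phi$ vanishes on every fixed point of $L\circ L^*$.

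For \ref{p:25i}, I would first note that the mere existence of a subgradient of $g$ at $Lx$ forces $g(Lx)=g^{**}(Lx)$: the inequality $g(y)\geq g(Lx)+\scal{y-Lx}{y^*}_{\GG}$ for every $y\in\GG$ exhibits an affine minorant of $g$ attaining $g(Lx)$ at $Lx$, so $g^{**}(Lx)\geq g(Lx)$, and the reverse inequality is Lemma~\ref{l:1}\ref{l:1i}. Combining this with Proposition~\ref{p:20}\ref{p:20ii} gives $(L\proxcc{\gamma}g)(x)\leq g^{**}(Lx)=g(Lx)$, which is the lower bound in \ref{p:25i}. For the upper bound, rewriting the same subdifferential inequality as $\scal{y}{y^*}_{\GG}-g(y)\leq\scal{Lx}{y^*}_{\GG}-g(Lx)$ and taking the supremum over $y\in\GG$ yields the Fenchel--Young equality $g^*(y^*)=\scal{Lx}{y^*}_{\GG}-g(Lx)$. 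Evaluating the supremum defining $(g^*+\gamma\Phi)^*(Lx)$ at this specific $y^*$ then gives
\[
(L\proxcc{\gamma}g)(x)\geq\scal{Lx}{y^*}_{\GG}-g^*(y^*)-\gamma\Phi(y^*)=g(Lx)-\gamma\Phi(y^*),
\]
which rearranges to the desired upper bound.

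For \ref{p:25ii}, I would choose $y^*\in\partial g(Lx)$ with $(\Id_{\GG}-L\circ L^*)y^*=0$, i.e., $y^*=LL^*y^*$. A direct computation then gives $\|L^*y^*\|_{\HH}^2=\scal{y^*}{LL^*y^*}_{\GG}=\|y^*\|_{\GG}^2$, whence $\Phi(y^*)=\qq_{\GG}(y^*)-\qq_{\HH}(L^*y^*)=0$; applying \ref{p:25i} at this $y^*$ squeezes $g(Lx)-(L\proxcc{\gamma}g)(x)$ between $0$ and $0$, forcing equality. The only real subtlety, rather than a genuine obstacle, is that $g$ is not assumed convex, so the Fenchel--Young equality at $(Lx,y^*)$ and the identity $g(Lx)=g^{**}(Lx)$ cannot be invoked from Lemma~\ref{l:8}\ref{l:8ii} and must instead be read off directly from the defining inequality of $\partial g(Lx)$; once this is observed, the whole argument reduces to a single application of Proposition~\ref{p:4}\ref{p:4ii}.
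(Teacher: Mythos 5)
Your proof is correct and follows essentially the same route as the paper's: the lower bound via Proposition~\ref{p:20}\ref{p:20ii} together with $g^{**}(Lx)=g(Lx)$, the upper bound by evaluating the supremum in $(g^*+\gamma\Phi)^*(Lx)$ at $y^*$ using the Fenchel--Young equality, and (ii) by checking $\Phi(y^*)=0$ when $y^*=LL^*y^*$. The only cosmetic difference is that you rederive $g(Lx)=g^{**}(Lx)$ and the Fenchel--Young equality from the subgradient inequality, whereas the paper cites them directly (they hold for arbitrary proper functions in \cite[Propositions~16.5 and 16.10]{Livre1}).
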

\begin{proof}
\ref{p:25i}:
By \cite[Proposition~16.10]{Livre1},
$g(Lx)+g^*(y^*)=\scal{Lx}{y^*}_{\GG}$. Further,
\cite[Proposition~16.5]{Livre1} yields $g^{**}(Lx)=g(Lx)\in\RR$.
Therefore, we deduce from Propositions~\ref{p:20}\ref{p:20ii} and
\ref{p:4}\ref{p:4ii} that $(L\proxcc{\gamma}g)(x)\in\RR$ and that
\begin{align}
0&\leq g(Lx)-\brk1{L\proxcc{\gamma}g}(x)\nonumber\\
&=g(Lx)-\brk1{g^*+\gamma\Phi}^*(Lx)\nonumber\\
&=g(Lx)-\sup_{y\in\GG}\brk1{\scal{Lx}{y}_{\GG}-g^*(y)
-\gamma\Phi(y)}\nonumber\\
&\leq g(Lx)-\brk1{\scal{Lx}{y^*}_{\GG}-g^*(y^*)
-\gamma\Phi(y^*)}\nonumber\\
&=\gamma\Phi(y^*).
\end{align}

\ref{p:25ii}:
There exists $y^*\in\partial g(Lx)$ such that $L(L^*y^*)=y^*$.
Therefore, $\Phi(y^*)=0$ and the conclusion follows from
\ref{p:25i}.
\end{proof} 

\begin{proposition}
\label{p:30}
Suppose that $L\in\BL(\HH,\GG)$ satisfies $0<\|L\|\leq 1$, let
$\beta\in\RPP$, let $\gamma\in\RPP$, and let $g\colon\GG\to\RR$ be
convex and $\beta$-Lipschitzian. Then the following hold:
\begin{enumerate}
\item
\label{p:30i}
$0\leq g\circ L-L\proxcc{\gamma}g\leq\gamma\beta^2/2$.
\item
\label{p:30ii}
$L^*\pushfwd g^*\leq L\proxc{1/\gamma}g^*\leq
(L^*\pushfwd g^*)+\gamma\beta^2/2$.
\item
\label{p:30iii}
Let $x\in\HH$. Then 
$\|\prox_{\gamma g\circ L}x-
\prox_{\gamma\brk1{L\proxcc{\gamma}g}}x\|\leq\gamma\beta$.
\end{enumerate}
\end{proposition}
\begin{proof}
We recall that a lower semicontinuous convex function 
$f\colon\HH\to\RR$ is $\beta$-Lipschitzian if
and only if $\ran\partial f=\dom\partial f^*\subset B(0;\beta)$
\cite[Corollary~17.19]{Livre1}. Moreover, since $\dom g=\GG$, we
have $\dom\partial g=\GG$ \cite[Proposition~16.27]{Livre1}.

\ref{p:30i}: Let $x\in\HH$ and set 
$\Phi=\qq_{\GG}-\qq_{\HH}\circ L^*$. Since $\dom\partial g=\GG$,
there exists 
$y^*\in\partial g(Lx)\subset\ran\partial g\subset B(0;\beta)$.
Thus, $\Phi(y^*)\leq\qq_{\GG}(y^*)\leq\beta^2/2$ and the result
follows from Proposition~\ref{p:25}\ref{p:25i}.

\ref{p:30ii}: The leftmost inequality follows from
Proposition~\ref{p:20}\ref{p:20i} and Lemma~\ref{l:1}\ref{l:1iii}.
On the other hand, Proposition~\ref{p:7}\ref{p:7i} implies that
$L\proxc{1/\gamma}g^*\in\Gamma_0(\HH)$. Additionally, in view of
Lemma~\ref{l:1}\ref{l:1ii} and \ref{p:30i},
$(L\proxcc{\gamma}g)^*\leq(g\circ L-\gamma\beta^2/2)^*$. Finally,
we deduce from Proposition~\ref{p:7}\ref{p:7ii} and 
\cite[Proposition~13.24(v)]{Livre1} that
\begin{equation}
L\proxc{1/\gamma}g^*=\brk1{L\proxcc{\gamma}g}^*
\leq\brk3{g\circ L-\dfrac{\gamma\beta^2}{2}}^*
=\brk1{g\circ L}^*+\dfrac{\gamma\beta^2}{2}
\leq\brk1{L^*\pushfwd g^*}+\dfrac{\gamma\beta^2}{2}.
\end{equation}

\ref{p:30iii}: Set $\Psi=\Id_{\GG}-L\circ L^*$,
$p_\gamma=\prox_{L\proxcc{}(\gamma g)}x$, and 
$p=\prox_{\gamma g\circ L}x$. We note that, since $\|L\|\leq 1$,
\begin{equation}
\label{e:23}
\Psi\;\text{is monotone and}\;\|\Psi\|\leq 1. 
\end{equation}
Next, we deduce from 
\cite[Proposition~16.44]{Livre1}
and Proposition~\ref{p:9}\ref{p:9ii} that there exists
$y_\gamma\in(\partial(\gamma g)^*+\Psi)^{-1}(Lp_\gamma)$ such that
$L^*y_\gamma=x-p_\gamma$. Thus, 
$y_\gamma\in\gamma\partial g(Lp_\gamma-\Psi y_\gamma)$. On
the other hand, by \cite[Proposition~16.44 and
Corollary~16.53(i)]{Livre1}, there exists 
$y\in\gamma\partial g(Lp)$ such that $L^*y=x-p$. Therefore, 
the monotonicity of $\gamma\partial g$ 
\cite[Theorem~20.25]{Livre1} entails that
\begin{equation}
\label{e:24}
\scal{(Lp_\gamma-\Psi y_\gamma)-Lp}{y_\gamma-y}_{\GG}\geq 0
\end{equation}
However, by \eqref{e:23}, the Cauchy--Schwarz
inequality, and the fact that
$\{y_\gamma,y\}\subset\gamma\ran\partial g\subset B(0;\gamma\beta)$
we derive that
\begin{align}
\label{e:p30iii}
\scal{(Lp_\gamma-\Psi y_\gamma)-Lp}{y_\gamma-y}_{\GG}\geq 0
&\Leftrightarrow\scal{p_\gamma-p}{L^*(y_\gamma-y)}_{\HH}
-\scal{\Psi y_\gamma}{y_\gamma-y}_{\GG}\geq 0\nonumber\\
&\Leftrightarrow\|p-p_\gamma\|_{\HH}^2\leq
\scal{\Psi y_\gamma}{y}_{\GG}-\scal{\Psi y_\gamma}{y_\gamma}_{\GG}
\nonumber\\
&\Rightarrow\|p-p_\gamma\|_{\HH}^2\leq
\scal{\Psi y_\gamma}{y}_{\GG}
\nonumber\\
&\Rightarrow\|p-p_\gamma\|_{\HH}^2\leq
\|\Psi\|\,\|y_\gamma\|_{\GG}\,\|y\|_{\GG}\nonumber\\
&\Rightarrow\|p-p_\gamma\|_{\HH}^2\leq(\gamma\beta)^2\nonumber\\
&\Leftrightarrow\|p-p_\gamma\|_{\HH}\leq\gamma\beta.
\end{align}
Since Proposition~\ref{p:1}\ref{p:1viii} asserts that
$\prox_{\gamma\brk1{L\proxcc{\gamma}g}}x=p_\gamma$, the proof
is complete. 
\end{proof}

\begin{example}
\label{ex:comp}
Let $L\in\BL(\HH,\GG)$, let $g\in\Gamma_0(\GG)$, let
$\gamma\in\RPP$, and let $\rho\in\RPP$. Suppose that $L\circ
L^*=\rho\Id_{\GG}$. Then the following hold:
\begin{enumerate}
\item
\label{ex:compi}
Set $h=g(\sqrt{\rho}\cdot)$ and $S=L/\sqrt{\rho}$. Then
$g\circ L=S\proxcc{\gamma}h$.
\item
\label{ex:compii}
$\prox_{\gamma g\circ L}=\Id_{\HH}+\rho^{-1}L^*\circ
(\prox_{\gamma\rho g}-\Id_{\GG})\circ L$.
\end{enumerate}
\end{example}
\begin{proof}
\ref{ex:compi}: Since $L\circ L^*=\rho\Id_{\GG}$, $S$ is a
coisometry, and we deduce from Proposition~\ref{p:20}\ref{p:20v}
and Lemma~\ref{l:8}\ref{l:8i-} that
$S\proxcc{\gamma}h=h\circ S=g\circ L$.

\ref{ex:compii}:
This follows from \ref{ex:compi} and 
Proposition~\ref{p:17}\ref{p:17ii} (see also 
\cite[Proposition~24.14]{Livre1}). 
\end{proof}

\begin{example}
\label{ex:proj}
Let $V$ be a closed vector subspace of $\HH$ and 
$\gamma\in\RPP$. Then the following hold:
\begin{enumerate}
\item
\label{ex:proji}
$\proj_V\proxc{\gamma}\,\|\cdot\|=\iota_V+\|\cdot\|$.
\item
\label{ex:projii}
$\proj_V\proxcc{\gamma}\,\|\cdot\|=\|\cdot\|\circ\proj_V$.
\end{enumerate}
\end{example}
\begin{proof}
Set $\Phi=\qq_{\HH}-\qq_{\HH}\circ\proj_V$ and let $x\in\HH$.

\ref{ex:proji}:
It follows from Proposition~\ref{p:4}\ref{p:4i},
Lemma~\ref{l:8}\ref{l:8i-}, and the identity
$\Phi=\qq_{\HH}\circ\proj_{V^\bot}$ that
\begin{equation}
\brk2{\proj_V\proxc{\gamma}\,\|\cdot\|}(x)
=\inf_{\substack{y\in\HH\\\proj_Vy=x}}
\brk3{\|y\|+\dfrac{1}{2\gamma}\|x-y\|^2}
=\begin{cases}
\|x\|,&\text{if}\;\;x\in V\\
\pinf,&\text{if}\;\;x\notin V
\end{cases}
~=\iota_V(x)+\|x\|.
\end{equation}

\ref{ex:projii}:
We recall that $\partial\|\cdot\|(x)=\{x/\|x\|\}$ if 
$x\neq 0$ and that $\partial\|\cdot\|(0)=B(0;1)$
\cite[Example~16.32]{Livre1}. Hence, 
\begin{align}
\label{e:ex2}
\proj_{V^\bot}\brk1{\partial\|\cdot\|(\proj_Vx)}
&=\begin{cases}
\bigl\{\proj_{V^\bot}\brk1{\proj_Vx/\|\proj_Vx\|}\bigr\},
&\text{if}\;\;\proj_Vx\neq 0;\\
\proj_{V^\bot}\brk1{B(0;1)},&\text{if}\;\;\proj_Vx=0
\end{cases}\nonumber\\
&=\begin{cases}
\{0\},&\text{if}\;\;x\notin V^\bot;\\
\proj_{V^\bot}\brk1{B(0;1)},&\text{if}\;\;x\in V^\bot
\end{cases}\nonumber\\
&\ni 0.
\end{align}
However, $\Id-\proj_V\circ\proj_V^*=\proj_{V^\bot}$. Therefore,
in view of Proposition~\ref{p:25}\ref{p:25ii}, this confirms that 
$\proj_V\proxcc{\gamma}\,\|\cdot\|=\|\cdot\|\circ\proj_V$. 
\end{proof}

\begin{remark}
In contrast with Proposition~\ref{p:20}\ref{p:20v},
Example~\ref{ex:proj}\ref{ex:projii} shows an instance in which the
proximal cocomposition coincides with the standard composition for
a linear operator which is not a coisometry.
\end{remark}

\subsection{Asymptotic properties}
We investigate the asymptotic properties of the families
$(L\proxc{\gamma}g)_{\gamma\in\RPP}$ and
$(L\proxcc{\gamma}g)_{\gamma\in\RPP}$ as $\gamma$ varies. These
results provide further connections between the compositions
\eqref{e:1}, \eqref{e:2}, and the proximal compositions of
Definition~\ref{d:1}.

\begin{proposition}
\label{p:40}
Suppose that $L\in\BL(\HH,\GG)$ satisfies $0<\|L\|\leq 1$ and let
$g\colon\GG\to\RX$ be a proper function such that 
$\cam g\neq\emp$. Suppose that $x\in L^{-1}(\dom g^{**})$ and set,
for every $\gamma\in\RPP$,
$x_\gamma=\prox_{\gamma(L\proxcc{\gamma}g)}x$. Then 
\begin{equation}
\displaystyle\lim_{0<\gamma\to 0}
\brk1{L\proxcc{\gamma}g}(x_\gamma)=g^{**}(Lx).
\end{equation}
\end{proposition}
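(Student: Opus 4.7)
The plan is to combine two earlier results: the envelope identity of Proposition~\ref{p:10}\ref{p:10ii} and the explicit formula for the proximity operator in Proposition~\ref{p:17}\ref{p:17ii}. First note that $g^{**}\in\Gamma_0(\GG)$ by Lemmas~\ref{l:1}\ref{l:1iv} and \ref{l:8}\ref{l:8i-}, and that $L\proxcc{\gamma}g\in\Gamma_0(\HH)$ by Proposition~\ref{p:7}\ref{p:7i}, so $x_\gamma$ is single-valued and well-defined.

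Invoking Proposition~\ref{p:10}\ref{p:10ii} together with the variational definition of $\prox$, I would write
\begin{equation*}
\brk1{L\proxcc{\gamma}g}(x_\gamma)+\dfrac{1}{2\gamma}\|x-x_\gamma\|_{\HH}^2=\moyo{\brk1{L\proxcc{\gamma}g}}{\gamma}(x)=\moyo{\brk1{g^{**}}}{\gamma}(Lx).
\end{equation*}
Because $Lx\in\dom g^{**}$ and $g^{**}\in\Gamma_0(\GG)$, the standard convergence result for Moreau envelopes yields $\moyo{\brk1{g^{**}}}{\gamma}(Lx)\to g^{**}(Lx)$ as $\gamma\to 0^+$, so the statement reduces to proving that $\|x-x_\gamma\|_{\HH}^2/(2\gamma)\to 0$.

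To control this quadratic term, I would use Proposition~\ref{p:17}\ref{p:17ii} to obtain $x-x_\gamma=L^*\bigl(Lx-\prox_{\gamma g^{**}}(Lx)\bigr)$; combined with $\|L^*\|=\|L\|\leq 1$, this gives
\begin{equation*}
\dfrac{1}{2\gamma}\|x-x_\gamma\|_{\HH}^2\leq\dfrac{1}{2\gamma}\|Lx-\prox_{\gamma g^{**}}(Lx)\|_{\GG}^2=\moyo{\brk1{g^{**}}}{\gamma}(Lx)-g^{**}\bigl(\prox_{\gamma g^{**}}(Lx)\bigr),
\end{equation*}
where the last equality is the envelope identity applied to $g^{**}$ at $Lx$. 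As $\gamma\to 0^+$, standard properties of proximity operators on $\Gamma_0(\GG)$ give $\prox_{\gamma g^{**}}(Lx)\to Lx$, and the lower semicontinuity of $g^{**}$ together with the sandwich $g^{**}(\prox_{\gamma g^{**}}(Lx))\leq\moyo{\brk1{g^{**}}}{\gamma}(Lx)\leq g^{**}(Lx)$ then forces $g^{**}\bigl(\prox_{\gamma g^{**}}(Lx)\bigr)\to g^{**}(Lx)$. Hence the right-hand side of the displayed inequality tends to $0$, and substituting back into the envelope identity completes the proof.

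The main obstacle is to show that the nonnegative remainder $\|x-x_\gamma\|_{\HH}^2/(2\gamma)$ vanishes in the limit: the envelope identity alone gives no direct information about how quickly $(L\proxcc{\gamma}g)(x_\gamma)$ approaches $g^{**}(Lx)$. The explicit formula for $\prox_{\gamma(L\proxcc{\gamma}g)}$ furnished by Proposition~\ref{p:17}\ref{p:17ii} is precisely what allows us to replace this unknown quantity by the classical and well-studied analogue $\|Lx-\prox_{\gamma g^{**}}(Lx)\|_{\GG}^2/(2\gamma)$ on $\GG$.
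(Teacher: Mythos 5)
Your proof is correct and follows essentially the same route as the paper: the envelope identity from Proposition~\ref{p:10}\ref{p:10ii}, the explicit proximity formula from Proposition~\ref{p:17}\ref{p:17ii} together with $\|L^*\|\leq 1$, and the vanishing of $\|Lx-\prox_{\gamma g^{**}}(Lx)\|_{\GG}^2/(2\gamma)$ for $Lx\in\dom g^{**}$. The only difference is that the paper cites this last fact directly from the literature on Moreau envelopes, whereas you re-derive it via the sandwich and lower-semicontinuity argument, which is a valid (and standard) proof of that auxiliary fact.
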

\begin{proof}
We first observe that, by virtue of
Proposition~\ref{p:7}\ref{p:7i}, $(x_\gamma)_{\gamma\in\RPP}$ is
well defined. Appealing to Proposition~\ref{p:10}\ref{p:10ii}, we
get
\begin{equation}
\label{e:p41}
\brk1{L\proxcc{\gamma}g}(x_\gamma)
+\dfrac{1}{\gamma}\qq_{\HH}(x-x_\gamma)
=\moyo{\brk1{L\proxcc{\gamma}g}}{\gamma}(x)
=\moyo{\brk1{g^{**}}}{\gamma}(Lx).
\end{equation}
On the other hand, by Proposition~\ref{p:17}\ref{p:17ii},
\begin{equation}
\label{e:p42}
\dfrac{1}{\gamma}\qq_{\HH}(x-x_\gamma)
=\dfrac{1}{\gamma}\qq_{\HH}
\brk2{L^*\brk1{Lx-\prox_{\gamma g^{**}}(Lx)}}
\leq\dfrac{1}{\gamma}\|L\|^2\qq_{\GG}
\brk1{Lx-\prox_{\gamma g^{**}}(Lx)}.
\end{equation} 
Therefore, since $Lx\in\dom g^{**}$,
\cite[Proposition~12.33(iii)]{Livre1} implies that
$\gamma^{-1}\qq_{\HH}(x-x_\gamma)\to 0$ as 
$\gamma\to 0$. Finally, by \eqref{e:p41} and
\cite[Proposition~12.33(ii)]{Livre1},
\begin{equation}
\lim_{0<\gamma\to 0}\brk1{L\proxcc{\gamma}g}(x_\gamma)
=\lim_{0<\gamma\to 0}\moyo{\brk1{g^{**}}}{\gamma}(Lx)
=\brk1{g^{**}}(Lx),
\end{equation}
as claimed.
\end{proof}

\begin{theorem}
\label{t:45}
Suppose that $L\in\BL(\HH,\GG)$ satisfies $0<\|L\|\leq 1$, let
$g\colon\GG\to\RX$ be a proper function such that 
$\cam g\neq\emp$, and let $x\in\HH$. Then the following hold:
\begin{enumerate}
\item
\label{t:45i}
The function
$\RPP\to\RX\colon\gamma\mapsto(L\proxc{\gamma}g)(x)$
is decreasing.
\item
\label{t:45ii}
The function
$\RPP\to\RX\colon\gamma\mapsto(L\proxcc{\gamma}g)(x)$
is decreasing.
\item
\label{t:45iii}
$\displaystyle\lim_{\gamma\to\pinf}(L\proxc{\gamma}g)(x)
=(L^*\pushfwd g^{**})(x)$.
\item
\label{t:45iv}
$\displaystyle\lim_{0<\gamma\to 0}(L\proxcc{\gamma}g)(x)
=g^{**}(Lx)$. 
\item
\label{t:45vi}
Suppose that $\|L\|< 1$. Then 
$\displaystyle\lim_{\gamma\to\pinf}(L\proxcc{\gamma}g)(x)
=\inf_{y\in\GG}g^{**}(y)$.
\item
\label{t:45vii}
Suppose that $\|L\|=1$ and that $V=\ran(\Id_{\GG}-L\circ L^*)$ is
closed. Then
$\displaystyle\lim_{\gamma\to\pinf}(L\proxcc{\gamma}g)(x)
=\inf_{y\in Lx-V}g^{**}(y)$. 
\end{enumerate}
\end{theorem}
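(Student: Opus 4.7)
All six items rest on the explicit formulas $L\proxc{\gamma}g=L^*\epushfwd(g^{**}+\Phi/\gamma)$ and $L\proxcc{\gamma}g=(g^*+\gamma\Phi)^*\circ L$ from Proposition~\ref{p:4}\ref{p:4i}--\ref{p:4ii}, where $\Phi=\qq_{\GG}-\qq_{\HH}\circ L^*$. The hypothesis $\|L\|\le 1$ together with Lemma~\ref{l:7} yields $\Phi\in\Gamma_0(\GG)$ and $\Phi\ge 0$, while Lemmas~\ref{l:1}\ref{l:1iv} and \ref{l:8}\ref{l:8i-} give $g^{**},g^*\in\Gamma_0(\GG)$. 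Items \ref{t:45i}--\ref{t:45ii} are then immediate: as $\gamma$ grows, $g^{**}+\Phi/\gamma$ decreases pointwise so its infimum over $\menge{y\in\GG}{L^*y=x}$ decreases, yielding \ref{t:45i}; symmetrically $g^*+\gamma\Phi$ increases, and Lemma~\ref{l:1}\ref{l:1ii} reverses the order through the Legendre conjugate, yielding \ref{t:45ii}.

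For \ref{t:45iii}, the lower bound comes from Proposition~\ref{p:20}\ref{p:20i}, and for every $y\in\GG$ with $L^*y=x$ and $g^{**}(y)<\pinf$, Proposition~\ref{p:4}\ref{p:4i} gives $(L\proxc{\gamma}g)(x)\le g^{**}(y)+\Phi(y)/\gamma\to g^{**}(y)$; taking the infimum over such $y$ closes the pinch. Item \ref{t:45iv} is the sandwich $\moyo{(g^{**})}{\gamma}\circ L\le L\proxcc{\gamma}g\le g^{**}\circ L$ from Proposition~\ref{p:20}\ref{p:20ii} combined with the pointwise convergence $\moyo{(g^{**})}{\gamma}(Lx)\uparrow g^{**}(Lx)$ as $\gamma\downarrow 0$ (the same result from \cite{Livre1} already invoked in Proposition~\ref{p:40}).

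For \ref{t:45vi} and \ref{t:45vii}, I would rewrite, as in the proof of Proposition~\ref{p:4}\ref{p:4iii+},
\[
\brk1{L\proxcc{\gamma}g}(x)=\brk1{g^{**}\infconv(\gamma\Phi)^*}(Lx)
=\inf_{z\in\dom(\gamma\Phi)^*}\brk1{g^{**}(Lx-z)+(\gamma\Phi)^*(z)}.
\]
Since $(\gamma\Phi)^*\ge 0$, discarding this nonnegative term yields $(L\proxcc{\gamma}g)(x)\ge\inf_{y\in Lx-\dom(\gamma\Phi)^*}g^{**}(y)$. Under \ref{t:45vi}, the bound $\Phi\ge(1-\|L\|^2)\qq_{\GG}$ from the proof of Proposition~\ref{p:4}\ref{p:4iii+} gives $\dom(\gamma\Phi)^*=\GG$ and $(\gamma\Phi)^*(z)\le\qq_{\GG}(z)/(\gamma(1-\|L\|^2))\to 0$ for each fixed $z$, producing both $\liminf\ge\inf g^{**}$ and $\limsup\le\inf g^{**}$. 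Under \ref{t:45vii}, setting $A=\Id_{\GG}-L\circ L^*$, which is monotone, self-adjoint, and has closed range $V$, one has $\Phi(y)=\scal{y}{Ay}_{\GG}/2$; Lemma~\ref{l:10} together with Lemma~\ref{l:2}\ref{l:2i} then gives $(\gamma\Phi)^*=\iota_V+q_{A^\dagger}/\gamma$, so that
\[
\brk1{L\proxcc{\gamma}g}(x)=\inf_{z\in V}\brk1{g^{**}(Lx-z)+q_{A^\dagger}(z)/\gamma};
\]
discarding the nonnegative term yields the lower bound $\inf_{y\in Lx-V}g^{**}(y)$, while fixing any admissible $z\in V$ and letting $\gamma\to\pinf$ yields the matching upper bound.

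\textbf{Main obstacle.} The delicate step is \ref{t:45vii}: with $\|L\|=1$ one loses the uniform coercivity estimate $\Phi\ge(1-\|L\|^2)\qq_{\GG}$ that underpins \ref{t:45vi}, and it is precisely the closedness of $V=\ran A$ that, via Lemma~\ref{l:10}, supplies the closed-form expression $\Phi^*=\iota_V+q_{A^\dagger}$ needed to reduce the limit to an infimum over the affine subspace $Lx-V$.
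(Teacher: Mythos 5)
Your proposal is correct and follows essentially the same route as the paper: monotonicity of the families via the explicit formulas of Proposition~\ref{p:4}\ref{p:4i}--\ref{p:4ii} together with $0\leq\Phi\leq\qq_{\GG}$, the sandwich of Proposition~\ref{p:20}\ref{p:20ii} with \cite[Proposition~12.33(ii)]{Livre1} for \ref{t:45iv}, and the identity $L\proxcc{\gamma}g=(g^{**}\infconv(\Phi^*/\gamma))\circ L$ with Lemma~\ref{l:10} for \ref{t:45vi}--\ref{t:45vii}. The only cosmetic difference is that you argue by a $\liminf$/$\limsup$ pinch where the paper exchanges the infima over $\gamma$ and $y$, which is the same computation.
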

\begin{proof}
Set $\Phi=\qq_{\GG}-\qq_{\HH}\circ L^*$.

\ref{t:45i}: Fix $\gamma_1\in\RPP$ and $\gamma_2\in\RPP$ such
that $\gamma_1\leq\gamma_2$. Then we deduce from
Proposition~\ref{p:4}\ref{p:4i} that
\begin{equation}
L\proxc{\gamma_2}g=L^*\epushfwd\brk1{g^{**}+\Phi/\gamma_2}
\leq L^*\epushfwd\brk1{g^{**}+\Phi/\gamma_1}
=L\proxc{\gamma_1}g.
\end{equation}

\ref{t:45ii}: Fix $\gamma_1\in\RPP$ and $\gamma_2\in\RPP$ such
that $\gamma_1\leq\gamma_2$. By \ref{t:45i}, 
$L\proxc{1/\gamma_1}g^*\leq L\proxc{1/\gamma_2}g^*$. Therefore,
appealing to Definition~\ref{d:1} and Lemma~\ref{l:1}\ref{l:1ii},
we get 
\begin{equation}
L\proxcc{\gamma_2}g=\brk1{L\proxc{1/\gamma_2}g^*}^*
\leq\brk1{L\proxc{1/\gamma_1}g^*}^*=L\proxcc{\gamma_1}g.
\end{equation}

\ref{t:45iii}: Since $\Phi\geq 0$, it follows from \ref{t:45i} and
Proposition~\ref{p:4}\ref{p:4i} that
\begin{align}
\lim_{\gamma\to\pinf}\brk1{L\proxc{\gamma}g}(x)
&=\inf_{\gamma\in\RPP}\brk3{L^*\epushfwd\brk2{g^{**}
+\dfrac{1}{\gamma}\Phi}}(x)\nonumber\\
&=\inf_{\gamma\in\RPP}\brk4{\inf_{\substack{y\in\GG\\L^*y=x}}
\brk2{g^{**}(y)+\dfrac{1}{\gamma}\Phi(y)}}\nonumber\\
&=\inf_{\substack{y\in\GG\\L^*y=x}}\brk4{\inf_{\gamma\in\RPP}
\brk2{g^{**}(y)+\dfrac{1}{\gamma}\Phi(y)}}\nonumber\\
&=\inf_{\substack{y\in\GG\\L^*y=x}}g^{**}(y)\nonumber\\
&=\brk1{L^*\pushfwd g^{**}}(x).
\end{align}

\ref{t:45iv}: 
By \cite[Proposition~12.33(ii)]{Livre1}, 
$\moyo{\brk1{g^{**}}}{\gamma}\to g^{**}$ as $0<\gamma\to 0$. The
claim therefore follows from Proposition~\ref{p:20}\ref{p:20ii}.

\ref{t:45vi}--\ref{t:45vii}: As in the proof of
Proposition~\ref{p:4}\ref{p:4iii+},
$(g^*+\gamma\Phi)^*=g^{**}\infconv(\Phi^*/\gamma)$. Thus, it
follows from Proposition~\ref{p:4}\ref{p:4ii} that
\begin{equation}
\label{e:t45v}
L\proxcc{\gamma}g=\brk2{g^{**}\infconv\brk1{\Phi^*/\gamma}}\circ L.
\end{equation}
Moreover, since $\Phi\leq\qq_{\GG}$, Lemma~\ref{l:1}\ref{l:1ii}
yields $\qq_{\GG}\leq\Phi^*$. Altogether, using \ref{t:45ii} and
\eqref{e:t45v}, we obtain
\begin{align}
\label{e:t45vi}
\lim_{\gamma\to\pinf}\brk1{L\proxcc{\gamma}g}(x)
&=\inf_{\gamma\in\RPP}\brk3{g^{**}\infconv
\dfrac{\Phi^*}{\gamma}}(Lx)\nonumber\\
&=\inf_{\gamma\in\RPP}\brk3{\inf_{y\in\GG}\brk2{g^{**}(y)
+\dfrac{1}{\gamma}\Phi^*(Lx-y)}}\nonumber\\
&=\inf_{y\in Lx-\dom\Phi^*}\brk3{\inf_{\gamma\in\RPP}
\brk2{g^{**}(y)+\dfrac{1}{\gamma}\Phi^*(Lx-y)}}\nonumber\\
&=\inf_{y\in Lx-\dom\Phi^*}g^{**}(y).
\end{align}
We set $A=\Id_{\GG}-L\circ L^*$ and observe that
$\Phi\colon y\mapsto\scal{y}{Ay}_{\GG}/2$. In case \ref{t:45vi},
since $\|L\|<1$, $A$ is invertible and Lemma~\ref{l:10} asserts
that $\dom\Phi^*=\ran A=\GG$ in \eqref{e:t45vi}. Finally, case
\ref{t:45vii} follows from Lemma~\ref{l:10} and \eqref{e:t45vi}. 
\end{proof}

\begin{corollary}
\label{c:46}
Suppose that $L\in\BL(\HH,\GG)$ is an isometry, let
$g\in\Gamma_0(\GG)$, and let $x\in\HH$. Then the following hold:
\begin{enumerate}
\item
\label{c:46ii}
$\displaystyle\lim_{\gamma\to\pinf}(L\proxc{\gamma}g)(x)
=(L^*\pushfwd g)(x)$.
\item
\label{c:46i}
$\displaystyle\lim_{0<\gamma\to 0}(L\proxc{\gamma}g)(x)=g(Lx)$.
\end{enumerate}
\end{corollary}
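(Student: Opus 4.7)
My plan is to derive both items as essentially direct consequences of Theorem~\ref{t:45}, using the isometry hypothesis to identify the proximal composition and cocomposition via Proposition~\ref{p:20}\ref{p:20iv}, together with the standard $\Gamma_0$ identities from Lemma~\ref{l:8}.

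First I would record the basic set-up. Since $L$ is an isometry, $L^*\circ L=\Id_{\HH}$, which forces $\|L\|=1$, so in particular $0<\|L\|\leq 1$. Since $g\in\Gamma_0(\GG)$, Lemma~\ref{l:8}\ref{l:8i--} yields $\cam g\neq\emp$, and Lemma~\ref{l:8}\ref{l:8i-} gives $g^{**}=g$. Thus the standing hypotheses of Theorem~\ref{t:45} are in force, and so is the isometry branch of Proposition~\ref{p:20}\ref{p:20iv}, which asserts that $L\proxc{\gamma}g=L\proxcc{\gamma}g$ for every $\gamma\in\RPP$.

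For \ref{c:46ii}, I would simply invoke Theorem~\ref{t:45}\ref{t:45iii} to obtain
\begin{equation*}
\lim_{\gamma\to\pinf}\brk1{L\proxc{\gamma}g}(x)
=\brk1{L^*\pushfwd g^{**}}(x)=\brk1{L^*\pushfwd g}(x),
\end{equation*}
using $g^{**}=g$.

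For \ref{c:46i}, the plan is first to replace $L\proxc{\gamma}g$ by $L\proxcc{\gamma}g$ via Proposition~\ref{p:20}\ref{p:20iv}, and then to apply Theorem~\ref{t:45}\ref{t:45iv}, which gives
\begin{equation*}
\lim_{0<\gamma\to 0}\brk1{L\proxc{\gamma}g}(x)
=\lim_{0<\gamma\to 0}\brk1{L\proxcc{\gamma}g}(x)
=g^{**}(Lx)=g(Lx).
\end{equation*}
There is no real obstacle here: everything reduces to bookkeeping once one notes that the isometry hypothesis collapses the two compositions and that $g\in\Gamma_0(\GG)$ permits dropping the biconjugate.
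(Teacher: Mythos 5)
Your proposal is correct and follows essentially the same route as the paper: identify $L\proxc{\gamma}g$ with $L\proxcc{\gamma}g$ via Proposition~\ref{p:20}\ref{p:20iv}, use $g^{**}=g$ from Lemma~\ref{l:8}\ref{l:8i-}, and then invoke Theorem~\ref{t:45}\ref{t:45iii} and \ref{t:45iv}. The only difference is that you spell out the verification of the standing hypotheses ($\|L\|=1$, $\cam g\neq\emp$), which the paper leaves implicit.
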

\begin{proof}
By Proposition~\ref{p:20}\ref{p:20iv},
$L\proxcc{\gamma}g=L\proxc{\gamma}g$, whereas
Lemma~\ref{l:8}\ref{l:8i-} yields $g^{**}=g$.

\ref{c:46ii}: A consequence of Theorem~\ref{t:45}\ref{t:45iii}.

\ref{c:46i}: A consequence of Theorem~\ref{t:45}\ref{t:45iv}.
\end{proof}

\begin{example}
Let $V\neq\{0\}$ be a closed vector subspace of $\GG$, let
$g\in\Gamma_0(\GG)$, and let $x\in\GG$. Then
\begin{equation}
\displaystyle\lim_{\gamma\to\pinf}
\brk1{\proj_V\proxcc{\gamma}\,g}(x)=\inf_{v\in V^\bot}g(x+v).
\end{equation}
\end{example}
\begin{proof}
Since $\|\proj_V\|=1$ and
$\ran(\Id_{\GG}-\proj_V\circ\proj_V^*)=V^\bot$, it follows from
Theorem~\ref{t:45}\ref{t:45vii} and Lemma~\ref{l:8}\ref{l:8i-} that
\begin{equation}
\lim_{\gamma\to\pinf}\brk1{\proj_V\proxcc{\gamma}\,g}(x)
=\inf_{y\in \proj_Vx-V^\bot}g(y)
=\inf_{y\in x+V^\bot}g(y)
=\inf_{v\in V^\bot}g(x+v),
\end{equation}
as announced.
\end{proof}

We now turn our attention to epi-convergence. As discussed in
\cite{Atto84}, this notion plays a central role in the
approximation of variational problems. It will allow us to connect
asymptotically the proximal composition to the infimal
postcomposition, and the proximal cocomposition to the standard
composition as $\gamma$ evolves. 

\begin{definition}[{\protect{\cite[Chapter~1]{Atto84},
\cite[Chapter~7]{Rock09}}}]
Suppose that $\HH$ is finite-dimensional, and let $(f_n)_{n\in\NN}$
and $f$ be functions from $\HH$ to $\RXX$. We say that
$(f_n)_{n\in\NN}$ \emph{epi-converges} to $f$, in symbols
$f_n\xrightarrow{e}f$, if the following hold for every $x\in\HH$:
\begin{enumerate}
\item
For every sequence $(x_n)_{n\in\NN}$ in $\HH$ such that 
$x_n\to x$, $f(x)\leq\varliminf f_n(x_n)$.
\item
There exists a sequence $(x_n)_{n\in\NN}$ in $\HH$ such that 
$x_n\to x$ and $\varlimsup f_n(x_n)\leq f(x)$.
\end{enumerate}
The \emph{epi-topology} is the topology induced by epi-convergence.
\end{definition}

\begin{lemma}
\label{l:47}
Suppose that $\HH$ and $\GG$ are finite-dimensional, let
$(L_n)_{n\in\NN}$ and $L$ be operators in $\BL(\HH,\GG)$, let 
$(g_n)_{n\in\NN}$ and $g$ be functions in $\Gamma_0(\GG)$, and
let $(\gamma_n)_{n\in\NN}$ and $\gamma$ be reals in $\RPP$.
Suppose that $L_n\to L$, $g_n\xrightarrow{e}g$, and
$\gamma_n\to\gamma$. Then the following hold:
\begin{enumerate}
\item
\label{l:47i}
$\gamma_n g_n\xrightarrow{e}\gamma g$.
\item
\label{l:47ii}
$g_n^*\xrightarrow{e}g^*$.
\item
\label{l:47iii}
Suppose that $h\colon\GG\to\RR$ is continuous. Then 
$g_n+\gamma_nh\xrightarrow{e}g+\gamma h$.
\item
\label{l:47iv}
Suppose that $0\in\inte(\dom g-\ran L)$. Then 
$g_n\circ L_n\xrightarrow{e}g\circ L$.
\end{enumerate}
\end{lemma}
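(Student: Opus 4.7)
The plan is to verify each of the four assertions directly from the definition of epi-convergence, handling parts \ref{l:47i} and \ref{l:47iii} by elementary bookkeeping, invoking the classical Wijsman--Attouch duality theorem for part \ref{l:47ii}, and treating part \ref{l:47iv} with a recovery-sequence construction that uses the qualification condition in an essential way.

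For part \ref{l:47i}, fix $x\in\GG$. Given any sequence $(x_n)_{n\in\NN}$ with $x_n\to x$, the fact that $\gamma_n\to\gamma>0$ yields $\varliminf\gamma_ng_n(x_n)=\gamma\varliminf g_n(x_n)\ge\gamma g(x)$; and any recovery sequence for $g$ at $x$ yields, after multiplication by $\gamma_n$, one for $\gamma g$ at $x$. Part \ref{l:47iii} reduces in the same spirit: the continuity of $h$ forces $\gamma_nh(x_n)\to\gamma h(x)$ whenever $x_n\to x$, so the two epi-limit inequalities for $g_n+\gamma_nh$ follow from those for $g_n$ plus a convergent continuous perturbation. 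One has to split the usual cases $g(x)=\pinf$ and $g(x)\in\RR$ in both arguments, but these are routine.

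Part \ref{l:47ii} is the Wijsman--Attouch theorem, which asserts that on $\Gamma_0(\GG)$ in finite dimensions, Legendre--Fenchel conjugation is a continuous involution for the epi-topology; I would quote this directly from \cite[Theorem~3.9]{Atto84} or \cite[Theorem~11.34]{Rock09}, noting that $(g_n)_{n\in\NN}$ and $g$ lie in $\Gamma_0(\GG)$ by hypothesis, so the theorem applies without further verification.

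The main obstacle is part \ref{l:47iv}. The liminf inequality is painless: if $x_n\to x$ in $\HH$, then $L_nx_n\to Lx$ in $\GG$ because $L_n\to L$ and $(x_n)_{n\in\NN}$ is bounded, so the hypothesis $g_n\xrightarrow{e}g$ applied at $Lx$ with test sequence $(L_nx_n)_{n\in\NN}$ yields $g(Lx)\le\varliminf g_n(L_nx_n)=\varliminf(g_n\circ L_n)(x_n)$. The delicate step is producing a recovery sequence at an arbitrary $x\in\HH$ with $g(Lx)<\pinf$: I would pick $y_n\to Lx$ with $g_n(y_n)\to g(Lx)$, and then exploit the qualification $0\in\inte(\dom g-\ran L)$ together with $L_n\to L$ to produce $x_n\to x$ such that $\|L_nx_n-y_n\|\to 0$ and $g_n(L_nx_n)-g_n(y_n)\to 0$. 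The qualification is used twice: first, via \cite[Theorem~2.17]{Atto84}-style arguments, to secure that the $(g_n)_{n\in\NN}$ are uniformly locally bounded (hence uniformly locally Lipschitzian by convexity and finite-dimensionality) on a fixed neighborhood of $Lx$, so that small perturbations of the argument do not disturb the functional values in the limit; second, to obtain the surjectivity-type estimate needed to correct $x$ into $x_n$ with the desired property. An alternative, and perhaps slicker, route is to deduce \ref{l:47iv} from the already-established part \ref{l:47ii} via the dual identity $(g\circ L)^*=L^*\epushfwd g^*$ of Lemma~\ref{l:5}\ref{l:5iii}, passing epi-convergence through infimal postcomposition under the qualification assumption.
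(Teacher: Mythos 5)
Your treatment of parts \ref{l:47i}--\ref{l:47iii} is correct and, modulo presentation, matches the paper, which simply cites \cite[Exercise~7.8]{Rock09} for \ref{l:47i} and \ref{l:47iii} and \cite[Theorem~11.34]{Rock09} (Wijsman's theorem) for \ref{l:47ii}; your direct verifications are fine, and your handling of \ref{l:47iii} via continuous convergence of $\gamma_nh$ to $\gamma h$ is equivalent to the paper's factoring trick $g_n+\gamma_nh=\gamma_n(g_n/\gamma_n+h)$.

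Part \ref{l:47iv} is where there is a genuine gap. The paper disposes of it by citing \cite[Exercise~7.47(a)]{Rock09}, and your lower (liminf) inequality is indeed painless, exactly as you say. But the mechanism you propose for the recovery sequence does not work as stated: you claim that the qualification $0\in\inte(\dom g-\ran L)$ yields uniform local boundedness, hence uniform local Lipschitz continuity, of the functions $g_n$ on a fixed neighborhood of $Lx$. That would require $Lx\in\inte\dom g$, which the qualification does not guarantee. Take $L=\Id_{\GG}$ and $g=\iota_C$ for a nonempty closed convex set $C\neq\GG$: then $\dom g-\ran L=\GG$, so the qualification holds, yet $g$ (and any epi-convergent approximating sequence of indicators) is unbounded on every neighborhood of a boundary point $Lx$ of $C$ with $g(Lx)<\pinf$. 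Since indicator-type functions are precisely among those the lemma must cover (compare Example~\ref{ex:proj}), the step fails in a relevant case; the actual proof behind \cite[Exercise~7.47(a)]{Rock09} has to work harder (approximation from the relative interior of $\dom g$ combined with a diagonal argument, or a dual tightness argument). Your alternative route through $(g\circ L)^*=L^*\epushfwd g^*$ and part \ref{l:47ii} is the right instinct but is not a shortcut: it requires establishing epi-convergence of the infimal postcompositions $L_n^*\pushfwd g_n^*$, which needs an equi-tightness condition to prevent minimizing sequences from escaping to infinity, and that is essentially the same difficulty in dual form. As written, part \ref{l:47iv} should either be completed with one of these more careful arguments or simply referred to the cited exercise.
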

\begin{proof}
\ref{l:47i}: \cite[Exercise~7.8(d)]{Rock09}.

\ref{l:47ii}: \cite[Theorem~11.34]{Rock09}.

\ref{l:47iii}: It follows from \ref{l:47i} and
\cite[Exercise~7.8(a)]{Rock09} that
$g_n/\gamma_n+h\xrightarrow{e}g/\gamma+h$.
Invoking \ref{l:47i} once more, we obtain
$g_n+\gamma_n h=\gamma_n(g_n/\gamma_n+h)\xrightarrow{e}
\gamma(g/\gamma+h)=g+\gamma h$.

\ref{l:47iv}: \cite[Exercise~7.47(a)]{Rock09}.
\end{proof}

\begin{theorem}
\label{t:50}
Suppose that $\HH$ and $\GG$ are finite-dimensional, let
$(L_n)_{n\in\NN}$ and $L$ be operators in $\BL(\HH,\GG)$, let 
$(g_n)_{n\in\NN}$ and $g$ be functions in $\Gamma_0(\GG)$, and
let $(\gamma_n)_{n\in\NN}$ and $\gamma$ be reals in $\RPP$. Then
the following hold:
\begin{enumerate}
\item
\label{t:50i}
Suppose that $L_n\to L$, $g_n\xrightarrow{e}g$, and
$\gamma_n\to\gamma$. Then the following are satisfied:
\begin{enumerate}
\item
\label{t:50ia}
$L_n\proxc{\gamma_n}g_n\xrightarrow{e}L\proxc{\gamma}g$.
\item
\label{t:50ib}
$L_n\proxcc{\gamma_n}g_n\xrightarrow{e}L\proxcc{\gamma}g$.
\end{enumerate}
\item
\label{t:50ii}
Suppose that $0<\|L\|\leq 1$. Then the following are satisfied:
\begin{enumerate}
\item
\label{t:50iia}
Suppose that $\gamma_n\uparrow\pinf$. Then
$L\proxc{\gamma_n}g\xrightarrow{e}
(L^*\pushfwd g)^{{\mbox{\raisebox{-1mm}{\large$\breve{}$}}}}$.
\item
\label{t:50iib}
Suppose that $\gamma_n\downarrow 0$. Then
$L\proxcc{\gamma_n}g\xrightarrow{e}g\circ L$.
\end{enumerate}
\end{enumerate}
\end{theorem}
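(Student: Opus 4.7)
The proof splits into two parts that rest on rather different techniques. For Part~(i), the plan is to propagate epi-convergence through every operation entering Definition~\ref{d:1}, using Lemma~\ref{l:47} at each step. For Part~(ii), the idea is to combine the monotonicity and pointwise limits already recorded in Theorem~\ref{t:45} with the classical fact that a monotone sequence of lsc convex functions epi-converges to the lsc envelope of its pointwise limit.

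For (i)(a), I unfold Definition~\ref{d:1} as $L_n\proxc{\gamma_n}g_n=\bigl(\moyo{(g_n^*)}{1/\gamma_n}\circ L_n\bigr)^*-\qq_{\HH}/\gamma_n$ and apply Lemma~\ref{l:47} five times. First, $g_n^*\xrightarrow{e}g^*$ by item~\ref{l:47ii}. Second, Lemma~\ref{l:8}\ref{l:8iv} rewrites the Moreau envelope as $\moyo{(g_n^*)}{1/\gamma_n}=(g_n+\qq_{\GG}/\gamma_n)^*$, so combining item~\ref{l:47iii} (with the continuous function $h=\qq_{\GG}$) and item~\ref{l:47ii} yields $\moyo{(g_n^*)}{1/\gamma_n}\xrightarrow{e}\moyo{(g^*)}{1/\gamma}$. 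Third, since $\dom\moyo{(g^*)}{1/\gamma}=\GG$ by Lemma~\ref{l:8}\ref{l:8iii}, the qualification hypothesis of Lemma~\ref{l:47}\ref{l:47iv} is trivially satisfied, so composing with $L_n$ preserves epi-convergence. Fourth and fifth, item~\ref{l:47ii} passes to the conjugate and item~\ref{l:47iii} (applied this time with $h=-\qq_{\HH}$) absorbs the final perturbation, giving $L_n\proxc{\gamma_n}g_n\xrightarrow{e}L\proxc{\gamma}g$. Item~(i)(b) is then immediate: by Definition~\ref{d:1}, $L_n\proxcc{\gamma_n}g_n=(L_n\proxc{1/\gamma_n}g_n^*)^*$, so applying (i)(a) to the sequence $(g_n^*,1/\gamma_n)$ (noting $g_n^*\xrightarrow{e}g^*$ via Lemma~\ref{l:47}\ref{l:47ii} and $1/\gamma_n\to 1/\gamma$) and invoking Lemma~\ref{l:47}\ref{l:47ii} once more finishes the argument.

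For (ii)(a), Theorem~\ref{t:45}\ref{t:45i} gives monotone decreasingness of $(L\proxc{\gamma_n}g)_{n\in\NN}$, Theorem~\ref{t:45}\ref{t:45iii} identifies the pointwise limit as $L^*\pushfwd g$ (using $g^{**}=g$ from Lemma~\ref{l:8}\ref{l:8i-}), and Proposition~\ref{p:7}\ref{p:7i} places each term in $\Gamma_0(\HH)$. The standard result that the epi-limit of a monotone decreasing sequence of lsc functions is the lsc envelope of its pointwise limit (see \cite[Chapter~7]{Rock09} or \cite{Atto84}) then produces the epi-limit $\overline{L^*\pushfwd g}$; since $L^*\pushfwd g$ is already convex, this lsc envelope coincides with $\breve{L^*\pushfwd g}$. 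For (ii)(b), Theorem~\ref{t:45}\ref{t:45ii}--\ref{t:45iv} deliver a monotone increasing sequence with pointwise supremum $g\circ L$, and the corresponding monotone-increasing epi-convergence statement yields $L\proxcc{\gamma_n}g\xrightarrow{e}\overline{g\circ L}=g\circ L$, the last equality because $g\circ L$ is lsc as the composition of an lsc function with a continuous linear operator.

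The real bookkeeping lies in Part~(i), and the crucial observation there is that Moreau envelopes have full domain (Lemma~\ref{l:8}\ref{l:8iii}), which eliminates the qualification condition of Lemma~\ref{l:47}\ref{l:47iv}; without this, composition with a sequence of operators would require nontrivial extra assumptions on $L$ and $g$. In Part~(ii) the only subtle point is matching the lsc hull furnished by the monotone epi-convergence theorem with the lsc convex hull $\breve{\,\cdot\,}$ appearing in the statement, which is automatic since the pointwise limit of a monotone sequence of convex functions is convex.
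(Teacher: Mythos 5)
Your proposal is correct and follows essentially the same route as the paper: for Part~(i) it propagates epi-convergence through Definition~\ref{d:1} via Lemma~\ref{l:8}\ref{l:8iv} and Lemma~\ref{l:47}, using the full domain of the Moreau envelope to satisfy the qualification in Lemma~\ref{l:47}\ref{l:47iv}, and for Part~(ii) it combines the monotonicity and pointwise limits of Theorem~\ref{t:45} with the monotone epi-convergence facts from \cite[Proposition~7.4]{Rock09}. The only cosmetic difference is in (ii)(b), where the paper quotes the increasing case as converging to $\sup_n\overline{f_n}$ rather than to the closure of the pointwise supremum, but since each $L\proxcc{\gamma_n}g$ is lower semicontinuous by Proposition~\ref{p:7}\ref{p:7i} the two expressions coincide and your conclusion stands.
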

\begin{proof}
\ref{t:50ia}: It follows from Lemmas~\ref{l:8}\ref{l:8iv} and
\ref{l:47}\ref{l:47ii}--\ref{l:47iii} that
\begin{equation}
\label{e:epi1}
\moyo{\brk1{g_n^*}}{\frac{1}{\gamma_n}}
=\brk2{g_n+\dfrac{1}{\gamma_n}\qq_{\GG}}^*\xrightarrow{e}
\brk2{g+\dfrac{1}{\gamma}\qq_{\GG}}^*
=\moyo{\brk1{g^*}}{\frac{1}{\gamma}}.
\end{equation}
Since Lemmas~\ref{l:1}\ref{l:1iv} and \ref{l:8}\ref{l:8iii} yield
$\dom\moyo{(g^*)}{\frac{1}{\gamma}}=\GG$,
Lemma~\ref{l:47}\ref{l:47iv} and \eqref{e:epi1} imply that 
$\moyo{(g_n^*)}{\frac{1}{\gamma_n}}\circ L_n\xrightarrow{e}
\moyo{(g^*)}{\frac{1}{\gamma}}\circ L$.
Finally, appealing to Definition~\ref{d:1} and
Lemma~\ref{l:47}\ref{l:47ii}--\ref{l:47iii}, we conclude that
\begin{equation}
L_n\proxc{\gamma_n}g_n
=\brk2{\moyo{\brk1{g_n^*}}{\frac{1}{\gamma_n}}\circ L_n}^*
-\dfrac{1}{\gamma_n}\qq_{\HH}
\xrightarrow{e}
\brk2{\moyo{\brk1{g^*}}{\frac{1}{\gamma}}\circ L}^*
-\dfrac{1}{\gamma}\qq_{\HH}
=L\proxc{\gamma}g.
\end{equation}

\ref{t:50ib}: By Lemma~\ref{l:47}\ref{l:47ii},
$g_n^*\xrightarrow{e}g^*$. Therefore, upon combining \ref{t:50ia}
and Lemma~\ref{l:47}\ref{l:47ii}, we obtain
\begin{equation}
L_n\proxcc{\gamma_n}g_n=\brk1{L_n\proxc{1/\gamma_n}g_n^*}^*
\xrightarrow{e}\brk1{L\proxc{1/\gamma}g^*}^*=L\proxcc{\gamma}g.
\end{equation}

\ref{t:50iia}: 
Set $f=L^*\pushfwd g$ and $(\forall n\in\NN)$
$f_n=L\proxc{\gamma_n}g$. It follows from items \ref{t:45i} and
\ref{t:45iii} in Theorem~\ref{t:45}, as well as
Lemma~\ref{l:8}\ref{l:8i-}, that $(f_n)_{n\in\NN}$ is decreasing
and pointwise convergent to $f$ as $n\to\pinf$. Further, since $f$
is convex by \cite[Proposition~12.36(ii)]{Livre1}, we deduce from
\cite[Proposition~7.4(c)]{Rock09} and \cite[Corollary~9.10]{Livre1}
that
\begin{equation}
f_n\xrightarrow{e}\overline{\inf_{n\in\NN}f_n}
=\overline{f}
=\breve{f}.
\end{equation}

\ref{t:50iib}: 
Set $f=g\circ L$ and $(\forall n\in\NN)$
$f_n=L\proxcc{\gamma_n}g$. Since $(\gamma_n)_{n\in\NN}$ is
decreasing, $(f_n)_{n\in\NN}$ is increasing by
Theorem~\ref{t:45}\ref{t:45ii}. Further,
Theorem~\ref{t:45}\ref{t:45iv} and Lemma~\ref{l:8}\ref{l:8i-} 
imply that $(f_n)_{n\in\NN}$ converges pointwise to $f$ as
$n\to\pinf$. On the other hand, Proposition~\ref{p:7}\ref{p:7i}
implies that $(\forall n\in\NN)$ $\overline{f_n}=f_n$. Therefore,
by virtue of \cite[Proposition~7.4(d)]{Rock09},
\begin{equation}
f_n\xrightarrow{e}
\sup_{n\in\NN}\overline{f_n}
=\sup_{n\in\NN}f_n
=f,
\end{equation}
which concludes the proof.
\end{proof}

\begin{corollary}
\label{c:51}
Suppose that $\HH$ and $\GG$ are finite-dimensional, let 
$L\in\BL(\HH,\GG)$, let $g\in\Gamma_0(\GG)$, and let 
$(\gamma_n)_{n\in\NN}$ be a sequence in $\RPP$. Suppose that 
$L$ is an isometry and that $(\reli\dom g^*)\cap(\ran L)\neq\emp$.
Then the following hold: 
\begin{enumerate}
\item
\label{c:51i}
Suppose that $\gamma_n\uparrow\pinf$. Then
$L\proxc{\gamma_n}g\xrightarrow{e}L^*\pushfwd g$.
\item
\label{c:51ii}
Suppose that $\gamma_n\downarrow 0$. Then
$L\proxc{\gamma_n}g\xrightarrow{e}g\circ L$.
\item
\label{c:51iii}
For every $t\in[0,1]$, set $\gamma_t=\tan(\pi t/2)$. 
Then the operator 
\begin{equation}
T\colon[0,1]\to\Gamma_0(\HH)\colon t\to
\begin{cases}
g\circ L,&\text{if}\;\;t=0;\\
L\proxc{\gamma_t}g,&\text{if}\;\;0<t<1;\\
L^*\pushfwd g,&\text{if}\;\;t=1
\end{cases}
\end{equation}
is continuous with respect to the epi-topology.
\end{enumerate}
\end{corollary}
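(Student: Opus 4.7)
The plan is to handle parts (i)--(iii) in sequence, with the main effort concentrated in part~(i). For part~(i), Theorem~\ref{t:50}\ref{t:50iia} already supplies $L\proxc{\gamma_n}g\xrightarrow{e}\breve{h}$, where $h=L^*\pushfwd g$, so the task reduces to showing $h\in\Gamma_0(\HH)$, whence $\breve{h}=h$. Since $\HH$ and $\GG$ are finite-dimensional, $\sri$ coincides with $\reli$, and since $\ran L$ is a closed subspace, $\reli(\dom g^*-\ran L)=\reli\dom g^*-\ran L$. The hypothesis $(\reli\dom g^*)\cap(\ran L)\neq\emp$ therefore yields $0\in\sri(\dom g^*-\ran L)$. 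Applying Lemma~\ref{l:5}\ref{l:5iii} with $g^*\in\Gamma_0(\GG)$ and with $L^*\in\BL(\GG,\HH)$ in place of the lemma's operator, and using $g^{**}=g$ from Lemma~\ref{l:8}\ref{l:8i-}, one obtains $(g^*\circ L)^*=L^*\epushfwd g=L^*\pushfwd g$. The qualification also furnishes a point of $\dom g^*\cap\ran L$, so $g^*\circ L$ is a proper lsc convex function on $\HH$; hence by Lemma~\ref{l:8}\ref{l:8i-}, $h=(g^*\circ L)^*\in\Gamma_0(\HH)$, as required.

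Part~(ii) is immediate: since $L$ is an isometry, Proposition~\ref{p:20}\ref{p:20iv} gives $L\proxc{\gamma}g=L\proxcc{\gamma}g$ for every $\gamma\in\RPP$, so Theorem~\ref{t:50}\ref{t:50iib} yields $L\proxc{\gamma_n}g\xrightarrow{e}g\circ L$.

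For part~(iii), I verify sequential continuity of $T$ at each $t_0\in[0,1]$. At $t_0\in(0,1)$, every $t_n\to t_0$ gives $\gamma_{t_n}\to\gamma_{t_0}\in\RPP$, so Theorem~\ref{t:50}\ref{t:50ia} applies directly to $L\proxc{\gamma_{t_n}}g$. At $t_0\in\{0,1\}$, rather than extracting monotone subsequences, I verify the two conditions of epi-convergence directly, exploiting the fact that $\gamma\mapsto(L\proxc{\gamma}g)(x)$ is decreasing by Theorem~\ref{t:45}\ref{t:45i}. In both boundary cases, the $\varlimsup$ half comes from taking $x_n\equiv x$ and invoking the pointwise limits of Theorem~\ref{t:45}\ref{t:45iii}--\ref{t:45iv} (combined with $g^{**}=g$). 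For the $\varliminf$ half at $t_0=0$: for each fixed $\gamma_0\in\RPP$, one has $\gamma_{t_n}\leq\gamma_0$ eventually, so $L\proxc{\gamma_{t_n}}g\geq L\proxc{\gamma_0}g$; the lsc of $L\proxc{\gamma_0}g\in\Gamma_0(\HH)$ yields $\varliminf(L\proxc{\gamma_{t_n}}g)(x_n)\geq(L\proxc{\gamma_0}g)(x)$, and letting $\gamma_0\downarrow 0$ via Theorem~\ref{t:45}\ref{t:45iv} concludes. For $t_0=1$, the uniform lower bound $L\proxc{\gamma_{t_n}}g\geq L^*\pushfwd g$ from Proposition~\ref{p:20}\ref{p:20i}, combined with the lsc of $L^*\pushfwd g$ secured in part~(i), gives the $\varliminf$ inequality in one step.

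The main obstacle is the step in part~(i) that translates the relative-interior hypothesis into the $\sri$ qualification of Lemma~\ref{l:5}\ref{l:5iii}; this is precisely what upgrades $\breve{h}$ to $h$ in the limit identified by Theorem~\ref{t:50}\ref{t:50iia}, and without it the corollary would give only the weaker statement with the convex lsc hull. Once this is in place, parts~(ii) and~(iii) amount to bookkeeping with Proposition~\ref{p:20} and Theorems~\ref{t:45}--\ref{t:50}.
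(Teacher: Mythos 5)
Your proof is correct and follows essentially the same route as the paper: the relative-interior hypothesis is converted into $0\in\sri(\dom g^*-\ran L)$ so that Lemma~\ref{l:5}\ref{l:5iii} and Lemma~\ref{l:8}\ref{l:8i-} yield $L^*\pushfwd g\in\Gamma_0(\HH)$, after which Proposition~\ref{p:20}\ref{p:20iv} and Theorem~\ref{t:50} deliver (i)--(iii). Your direct $\varliminf$/$\varlimsup$ verification at the endpoints of (iii) is a slightly more explicit treatment of possibly non-monotone sequences $\gamma_{t_n}$ than the paper's appeal to items (i)--(ii), but the substance is the same.
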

\begin{proof}
Proposition~\ref{p:20}\ref{p:20iv} yields 
$(\forall\gamma\in\RPP)$ $L\proxcc{\gamma}g=L\proxc{\gamma}g$.
Further, \cite[Proposition~6.19(x)]{Livre1} implies that 
$0\in\sri(\dom g^*-\ran L)$. Therefore, by virtue of
Lemmas~\ref{l:5}\ref{l:5iii} and \ref{l:8}\ref{l:8i-},
we get $L^*\pushfwd g\in\Gamma_0(\HH)$.

\ref{c:51i}: A consequence of Theorem~\ref{t:50}\ref{t:50iia}.

\ref{c:51ii}: See Theorem~\ref{t:50}\ref{t:50iib}.

\ref{c:51iii}: Theorem~\ref{t:50}\ref{t:50ia} guarantees the
epi-continuity of $T$ on $\zeroun$. Finally, 
\ref{c:51i} and \ref{c:51ii} imply that
$\displaystyle\lim_{0<t\to 0}T(t)=T(0)$ and
$\displaystyle\lim_{1>t\to1}T(t)=T(1)$, respectively.
\end{proof}

\begin{remark} 
Suppose that $\HH$ and $\GG$ are finite-dimensional and that
$L\in\BL(\HH,\GG)$ satisfies $0<\|L\|\leq 1$, let
$g\in\Gamma_0(\GG)$, and let $(\gamma_n)_{n\in\NN}$ be a sequence
in $\RPP$. Under a qualification condition (see
Lemma~\ref{l:5}\ref{l:5iii}), $L^*\pushfwd g\in\Gamma_0(\HH)$ and,
consequently, $L^*\pushfwd g
=(L^*\pushfwd g)^{{\mbox{\raisebox{-1mm}{\large$\breve{}$}}}}$. In
this case, Theorem~\ref{t:45}\ref{t:45iii} and
Theorem~\ref{t:50}\ref{t:50iia} show that the proximal composition
converges pointwise and epi-converges to the infimal
postcomposition as $\gamma_n\uparrow\pinf$. On the other hand,
Theorem~\ref{t:45}\ref{t:45iv} and Theorem~\ref{t:50}\ref{t:50iib}
show that the proximal cocomposition converges pointwise and
epi-converges to the standard composition. Further, in the
particular case in which $L\in\BL(\HH,\GG)$ is an isometry,
Corollary~\ref{c:51}\ref{c:51iii} asserts that $g\circ L$ and
$L^*\pushfwd g$ are homotopic via the proximal composition with
respect to the epi-topology. 
\end{remark}

\begin{proposition}
\label{p:55}
Suppose that $\HH$ and $\GG$ are finite-dimensional and that 
$L\in\BL(\HH,\GG)$ satisfies $0<\|L\|\leq 1$, let
$g\in\Gamma_0(\GG)$, and let $(\gamma_n)_{n\in\NN}$ be a sequence
in $\RPP$ such that $\gamma_n\downarrow 0$. Suppose that $\dom
g\cap\ran L\neq\emp$ and that $g\circ L$ is coercive. Then the
following hold:
\begin{enumerate}
\item
\label{p:55i}
$\inf_{x\in\HH}(L\proxcc{\gamma_n}g)(x)\to\min_{x\in\HH}g(Lx)$.
\item
\label{p:55ii}
There exists $N\subset\NN$ such that $\NN\smallsetminus N$ is
finite and $(\forall n\in N)$
$\Argmin(L\proxcc{\gamma_n}g)\neq\emp$. Further, 
\begin{equation}
\varlimsup\Argmin\brk1{L\proxcc{\gamma_n}g}\subset
\Argmin\brk1{g\circ L}.
\end{equation}
\end{enumerate}
\end{proposition}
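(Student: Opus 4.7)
The plan is to combine the epi-convergence supplied by
Theorem~\ref{t:50}\ref{t:50iib}, namely
$L\proxcc{\gamma_n}g\xrightarrow{e}g\circ L$, with the standard
principle that an eventually level-bounded sequence of proper
lower semicontinuous functions epi-converging to a proper function
has its infima converging, its sets of minimizers eventually
nonempty, and the outer-limit inclusion
$\varlimsup\Argmin(f_n)\subset\Argmin(f)$
\cite[Theorem~7.33]{Rock09}. The whole substance of the proof is
therefore to verify that the family
$(L\proxcc{\gamma_n}g)_{n\in\NN}$ is eventually level-bounded.

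First I would record that $g\circ L\in\Gamma_0(\HH)$ by
Lemma~\ref{l:8}\ref{l:8i-} and the hypothesis
$\dom g\cap\ran L\neq\emp$, so coercivity of $g\circ L$ forces
$\Argmin(g\circ L)\neq\emp$. Fix
$\overline{x}\in\Argmin(g\circ L)$ and set
$f_n=L\proxcc{\gamma_n}g$. By Proposition~\ref{p:7}\ref{p:7i},
$f_n\in\Gamma_0(\HH)$. Since $(\gamma_n)_{n\in\NN}$ is decreasing,
Theorem~\ref{t:45}\ref{t:45ii} shows that $(f_n)_{n\in\NN}$ is
nondecreasing, and Theorem~\ref{t:45}\ref{t:45iv} combined with
$g^{**}=g$ yields $f_n(x)\uparrow g(Lx)$ pointwise on $\HH$.

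The crux is eventual level-boundedness, which I would establish by
a convexity argument. Fix $\alpha\in\RR$, and suppose for
contradiction that there exist a subsequence $(n_k)_{k\in\NN}$ and
points $x_k\in\HH$ with $f_{n_k}(x_k)\leq\alpha$ and
$\|x_k\|_{\HH}\to\pinf$. Passing to a further subsequence, we may
assume that
$(x_k-\overline{x})/\|x_k-\overline{x}\|_{\HH}\to d$ for some
$d\in\HH$ with $\|d\|_{\HH}=1$. For fixed $r\in\RPP$, set
$t_k=r/\|x_k-\overline{x}\|_{\HH}\to 0$ and
$y_k=\overline{x}+t_k(x_k-\overline{x})\to\overline{x}+rd$.
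Convexity of $f_{n_k}$ and the bound $f_{n_k}(x_k)\leq\alpha$ give
\begin{equation}
f_{n_k}(y_k)\leq(1-t_k)f_{n_k}(\overline{x})+t_k\alpha.
\end{equation}
Since $f_{n_k}(\overline{x})\to g(L\overline{x})=\min(g\circ L)$
by Theorem~\ref{t:45}\ref{t:45iv} and $t_k\to 0$, one obtains
$\varlimsup f_{n_k}(y_k)\leq\min(g\circ L)$. The liminf condition
in the definition of epi-convergence then forces
$(g\circ L)(\overline{x}+rd)\leq\varliminf f_{n_k}(y_k)
\leq\min(g\circ L)$, so $\overline{x}+rd\in\Argmin(g\circ L)$ for
every $r\in\RPP$. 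The entire ray $\overline{x}+\RP d$ would thus
lie in $\Argmin(g\circ L)$, contradicting coercivity of
$g\circ L$. Hence $(f_n)_{n\in\NN}$ is eventually level-bounded.

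Combining eventual level-boundedness with
$f_n\xrightarrow{e}g\circ L$, \cite[Theorem~7.33]{Rock09}
simultaneously yields \ref{p:55i}, the eventual nonemptyness of
$\Argmin(f_n)$, and the inclusion
$\varlimsup\Argmin(f_n)\subset\Argmin(g\circ L)$ required in
\ref{p:55ii}. The chief obstacle is precisely the
level-boundedness verification, which must weave together
convexity of the $f_n$, their monotone pointwise convergence to
$g\circ L$, and coercivity of the limit to prevent
near-sublevel-set points from escaping to infinity.
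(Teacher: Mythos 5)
Your proof is correct and follows essentially the same route as the paper: epi-convergence from Theorem~\ref{t:50}\ref{t:50iib}, eventual level-boundedness of the sequence $(L\proxcc{\gamma_n}g)_{n\in\NN}$, and then \cite[Theorem~7.33]{Rock09}. The only difference is that the paper obtains eventual level-boundedness by invoking \cite[Exercise~7.32(c)]{Rock09} together with the boundedness of the level sets of the coercive limit $g\circ L$, whereas you reprove that step by hand with a convexity/ray argument; both are valid.
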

\begin{proof}
Set $f=g\circ L$ and $(\forall n\in\NN)$ $f_n=L\proxcc{\gamma_n}g$.
Since $\dom g\cap\ran L\neq\emp$, $f\in\Gamma_0(\HH)$. Thus, by
\cite[Proposition~11.15(i)]{Livre1}, $f$ has a minimizer over
$\HH$. Further, by Proposition~\ref{p:7}\ref{p:7i}, for every
$n\in\NN$, $f_n\in\Gamma_0(\HH)$ and, by
Theorem~\ref{t:50}\ref{t:50iib}, $f_n\xrightarrow{e}f$. On the
other hand, \cite[Proposition~11.12]{Livre1} asserts that the lower
level sets $(\lev{\xi}f)_{\xi\in\RR}$ are bounded. Altogether, by
virtue of \cite[Exercise~7.32(c)]{Rock09}, for every $\xi\in\RR$,
there exists $N_\xi\in\NN$ such that  
$\bigcup_{n\geq N_\xi}\lev{\xi}f_n$ is bounded.

\ref{p:55i}--\ref{p:55ii}: A consequence of
\cite[Theorem~7.33]{Rock09}.
\end{proof}

\section{Integral proximal mixtures}
\label{sec:4}

\subsection{Definition and mathematical setting} 
\label{sec:41}
Integral proximal mixtures were introduced in \cite{Jota24} as a
tool to combine arbitrary families of convex functions and linear
operators in such a way that the proximity operator of the mixture
can be expressed explicitly in terms of the individual proximity
operators. They extend the proximal mixtures of \cite{Svva23},
which were designed for finite families. In this section, we use
the results of Section~\ref{sec:3} to study their variational
properties. This investigation is carried out in the same framework
as in \cite{Jota24}, which hinges on the following assumptions.
Henceforth, we adopt the customary convention that the integral
of an $\FF$-measurable function $\vartheta\colon\Omega\to\RXX$ is
the usual Lebesgue integral $\int_{\Omega}\vartheta d\mu$, except
when the Lebesgue integral $\int_{\Omega}\max\{\vartheta,0\}d\mu$
is $\pinf$, in which case $\int_{\Omega}\vartheta d\mu=\pinf$. 

\begin{assumption}
\label{a:1}
Let $(\Omega,\FF,\mu)$ be a complete $\sigma$-finite measure space,
let $(\GW)_{\omega\in\Omega}$ be a family of real Hilbert spaces,
and let $\prod_{\omega\in\Omega}\GW$ be the usual real vector
space of mappings $x$ defined on $\Omega$ such that
$(\forall\omega\in\Omega)$ $x(\omega)\in\GW$.
Let $((\GW)_{\omega\in\Omega},\mathfrak{G})$ be an
$\FF$-measurable vector field of real Hilbert spaces,
that is, $\mathfrak{G}$ is a vector subspace of
$\prod_{\omega\in\Omega}\GW$ which satisfies the following:
\begin{enumerate}[label={\normalfont[\Alph*]}]
\item
\label{a:1A}
For every $x\in\mathfrak{G}$, the function $\Omega\to\RR\colon
\omega\mapsto\norm{x(\omega)}_{\GW}$ is
$\FF$-measurable.
\item
\label{a:1B}
For every $x\in\prod_{\omega\in\Omega}\GW$,
\begin{equation}
\brk[s]!{\;(\forall y\in\mathfrak{G})\;\;
\Omega\to\RR\colon
\omega\mapsto\scal{x(\omega)}{y(\omega)}_{\GW}
\,\,\text{is $\FF$-measurable}\;}
\quad\Rightarrow\quad x\in\mathfrak{G}.
\end{equation}
\item
\label{a:1C}
There exists a sequence $(e_n)_{n\in\NN}$ in $\mathfrak{G}$ such
that $(\forall\omega\in\Omega)$
$\spc\{e_n(\omega)\}_{n\in\NN}=\GW$.
\end{enumerate}
Set
$\GH=\menge{x\in\mathfrak{G}}{\int_{\Omega}
\norm{x(\omega)}_{\GW}^2\mu(d\omega)<\pinf}$,
and let $\GG$ be the real Hilbert space of equivalence classes of
$\mae$ equal mappings in $\GH$ equipped with the scalar product
\begin{equation}
\label{e:y9d2}
\scal{\Cdot}{\Cdot}_{\GG}\colon\GG\times\GG\to\RR\colon
(x,y)\mapsto\int_{\Omega}
\scal{x(\omega)}{y(\omega)}_{\GW}\mu(d\omega),
\end{equation}
where we adopt the common practice of designating by $x$ both an
equivalence class in $\GG$ and a representative of it in $\GH$.
We write
\begin{equation}
\GG=\leftindex^{\mathfrak{G}}{\int}_{\Omega}^{\oplus}
\GW\mu(d\omega)
\end{equation}
and call $\GG$ the \emph{Hilbert direct integral of}
$((\GW)_{\omega\in\Omega},\mathfrak{G})$ \cite{Dixm69}.
\end{assumption}

\begin{assumption}
\label{a:3}
Assumption~\ref{a:1} and the following are in force:
\begin{enumerate}[label={\normalfont[\Alph*]}]
\item
\label{a:3a}
$\HS$ is a separable real Hilbert space.
\item
\label{a:3b}
For every $\omega\in\Omega$, $\LW\in\BL(\HS,\GW)$.
\item
\label{a:3c}
For every $\mathsf{x}\in\HS$, the mapping
$\mathfrak{e}_{\LS}\mathsf{x}\colon
\omega\mapsto\LW\mathsf{x}$ lies in $\mathfrak{G}$.
\item
\label{a:3d}
$0<\int_{\Omega}\norm{\LW}^2\mu(d\omega)\leq 1$.
\end{enumerate}
\end{assumption}

Given a complete $\sigma$-finite measure space $(\Omega,\FF,\mu)$,
a separable real Hilbert space $\HS$ with Borel $\sigma$-algebra
$\BE_{\HS}$, and $p\in\intv[r]{1}{\pinf}$, we set 
\begin{equation}
\mathscr{L}^p\brk1{\Omega,\FF,\mu;\HS}
=\Menge3{x\colon\Omega\to\HS}{x\,\,
\text{is $(\FF,\BE_{\HS})$-measurable and}\,\,
\int_{\Omega}\norm{x(\omega)}_{\HS}^p\,\mu(d\omega)<\pinf}.
\end{equation}
The Lebesgue integral (also known as the Bochner integral) of 
$x\in\mathscr{L}^1\brk1{\Omega,\FF,\mu;\HS}$ is denoted by
$\int_{\Omega}x(\omega)\mu(d\omega)$. The space of equivalence
classes of $\mae$ equal mappings in
$\mathscr{L}^p\brk{\Omega,\FF,\mu;\HS}$ is denoted by 
$L^p\brk{\Omega,\FF,\mu;\HS}$.

\begin{assumption}
\label{a:2}
Assumption~\ref{a:1} and the following are in force:
\begin{enumerate}[label={\normalfont[\Alph*]}]
\item
\label{a:2a}
For every $\omega\in\Omega$, $\gw\colon\GW\to\RX$ satisfies
$\cam\gw\neq\emp$.
\item
\label{a:2d}
For every $x^*\in\GH$, the mapping
$\omega\mapsto\prox_{\gw^*}x^*(\omega)$ lies in $\mathfrak{G}$.
\item
\label{a:2b}
There exists $r\in\GH$ such that the function
$\omega\mapsto\gw(r(\omega))$ lies in
$\mathscr{L}^1(\Omega,\FF,\mu;\RR)$.
\item
\label{a:2c}
There exists $r^*\in\GH$ such that the function
$\omega\mapsto\gw^*(r^*(\omega))$ lies in
$\mathscr{L}^1(\Omega,\FF,\mu;\RR)$.
\end{enumerate}
\end{assumption}

The following construct will also be required.

\begin{definition}[{\protect{\cite[Definition~1.4]{Cana24}}}]
Suppose that Assumption~\ref{a:1} is in force and, for every
$\omega\in\Omega$, let $\gw\colon\GW\to\RXX$. Suppose that, for
every $x\in\GH$, the function
$\Omega\to\RXX\colon\omega\mapsto\gw(x(\omega))$ is
$\FF$-measurable. The \emph{Hilbert direct integral} of the
functions $(\gw)_{\omega\in\Omega}$ relative to $\mathfrak{G}$ is
\begin{equation}
\leftindex^{\mathfrak{G}}{\int}_{\Omega}^{\oplus}
\gw\mu(d\omega)\colon\GG\to\RXX\colon x\mapsto
\int_{\Omega}\gw\brk1{x(\omega)}\mu(d\omega).
\end{equation}
\end{definition}

We introduce below parametrized versions of the integral proximal
mixtures of \cite[Definition~4.2]{Jota24}.

\begin{definition}
\label{d:ipm}
Suppose that Assumptions~\ref{a:3} and \ref{a:2} are in force, and
let $\gamma\in\RPP$. The \emph{integral proximal mixture} of
$(\gw)_{\omega\in\Omega}$ and $(\LW)_{\omega\in\Omega}$ with
parameter $\gamma$ is
\begin{equation}
\label{e:ipm}
\Rm{\gamma}(\LW,\gw)_{\omega\in\Omega}
=\mathsf{h}^*-\dfrac{1}{\gamma}\qq_{\HS},
\quad\text{where}\quad(\forall\mathsf{x}\in\HS)\quad
\mathsf{h}(\mathsf{x})=\int_{\Omega}
\moyo{\brk1{\gw^*}}{\frac{1}{\gamma}}
(\LW\mathsf{x})\mu(d\omega),
\end{equation}
and the \emph{integral proximal comixture} of
$(\gw)_{\omega\in\Omega}$ and $(\LW)_{\omega\in\Omega}$ with
parameter $\gamma$ is
\begin{equation}
\label{e:ipc}
\Rcm{\gamma}(\LW,\gw)_{\omega\in\Omega}
=\brk3{\Rm{1/\gamma}\brk1{\LW,\gw^*}_{\omega\in\Omega}}^*.
\end{equation}
\end{definition}

\subsection{Properties}
\label{sec:42}

The following proposition adopts the pattern of
\cite[Theorem~4.3]{Jota24} by connecting integral proximal mixtures
to proximal compositions in the more general context of
Definitions~\ref{d:1} and ~\ref{d:ipm}. 

\begin{proposition}
\label{p:60}
Suppose that Assumptions~\ref{a:3} and \ref{a:2} are in force, and
let $\gamma\in\RPP$. Define
\begin{equation}
\label{e:60b}
L\colon\HS\to\GG\colon\mathsf{x}\mapsto
\mathfrak{e}_{\LS}\mathsf{x}
\end{equation}
and
\begin{equation}
\label{e:60a}
g=\leftindex^{\mathfrak{G}}{\int}_{\Omega}^{\oplus}
\gw^{**}\mu(d\omega).
\end{equation}
Then the following hold:
\begin{enumerate}
\item
\label{p:60i}
$L\in\BL(\HS,\GG)$ and $0<\|L\|\leq 1$.
\item
\label{p:60ii}
$L^*\colon\GG\to\HS\colon
x^*\mapsto\int_{\Omega}\LW^*(x^*(\omega))\mu(d\omega)$.
\item
\label{p:60iii}
$g\in\Gamma_0(\GG)$.
\item
\label{p:60iv}
$\Rm{\gamma}(\LW,\gw)_{\omega\in\Omega}=L\proxc{\gamma}g$.
\item
\label{p:60v}
$\Rcm{\gamma}(\LW,\gw)_{\omega\in\Omega}=L\proxcc{\gamma}g$.
\end{enumerate}
\end{proposition}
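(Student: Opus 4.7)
The plan is to handle the five items in the natural order (i)--(v), since (iii) rests on the Hilbert-direct-integral machinery of the paper's framework, (iv) is the substantive identity, and (v) follows formally from (iv).

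For (i), the operator $L$ is linear by construction; Assumption~\ref{a:3}\ref{a:3c} ensures $L\mathsf{x}\in\mathfrak{G}$ and the scalar-product characterization~\ref{a:1B} (together with the Hilbert structure on $\GG$) gives $L\mathsf{x}\in\GG$. Boundedness follows from the estimate
\begin{equation*}
\|L\mathsf{x}\|_{\GG}^2=\int_{\Omega}\|\LW\mathsf{x}\|_{\GW}^2\mu(d\omega)
\leq\|\mathsf{x}\|_{\HS}^2\int_{\Omega}\|\LW\|^2\mu(d\omega)
\leq\|\mathsf{x}\|_{\HS}^2,
\end{equation*}
where the last inequality is Assumption~\ref{a:3}\ref{a:3d}. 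The strict positivity $\|L\|>0$ comes from the same assumption: since $\int_{\Omega}\|\LW\|^2\mu(d\omega)>0$, the set $\{\omega\mid\LW\neq 0\}$ has positive measure, so there exists $\mathsf{x}\in\HS$ with $L\mathsf{x}\neq 0$ in $\GG$. For (ii), I would verify the formula by computing $\scal{L\mathsf{x}}{x^*}_{\GG}$ via \eqref{e:y9d2}, interchanging the integral and scalar product by Assumption~\ref{a:1}\ref{a:1B}, and recognizing the resulting quantity as a Bochner integral (whose existence is guaranteed by Cauchy--Schwarz, using $\int\|\LW\|^2\mu(d\omega)\leq 1$ and $x^*\in\GG$).

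For (iii), the function $g$ is a Hilbert direct integral of the biconjugates $\gw^{**}$, each of which lies in $\Gamma_0(\GW)$ by Lemmas~\ref{l:1}\ref{l:1iv} and \ref{l:8}\ref{l:8i-} (applied with Assumption~\ref{a:2}\ref{a:2a}). Measurability of $\omega\mapsto\gw^{**}(x(\omega))$ for $x\in\GH$ is inherited from Assumption~\ref{a:2}\ref{a:2d} via the duality between proximity operators and $\Gamma_0$-functions, and Assumptions~\ref{a:2}\ref{a:2b}--\ref{a:2c} furnish the properness together with the existence of a continuous affine minorant of $g$. The conclusion $g\in\Gamma_0(\GG)$ then follows from the general properties of Hilbert direct integrals recorded in \cite{Cana24}.

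For (iv), I would unravel Definition~\ref{d:1}: since $L\proxc{\gamma}g=(\moyo{(g^*)}{1/\gamma}\circ L)^*-\qq_{\HS}/\gamma$, matching \eqref{e:ipm} reduces to proving the identity
\begin{equation*}
\moyo{(g^*)}{\frac{1}{\gamma}}(L\mathsf{x})
=\int_{\Omega}\moyo{(\gw^*)}{\frac{1}{\gamma}}(\LW\mathsf{x})\mu(d\omega).
\end{equation*}
Using the conjugation rule $g^*=\int^{\oplus}(\gw^{**})^*\mu(d\omega)=\int^{\oplus}\gw^*\mu(d\omega)$ (Lemma~\ref{l:1}\ref{l:1iii}, via direct-integral duality) and the interchange of infimum and Bochner integral for the definition of the Moreau envelope, both sides coincide. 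This interchange---made rigorous via a measurable selection argument using the assumed measurability of $\omega\mapsto\prox_{\gw^*}$---is where I expect the main technical obstacle to sit; it is precisely the content of the direct-integral Moreau-envelope formula established in the framework of \cite{Jota24,Cana24}. Finally, (v) follows directly: applying (iv) to the family $(\gw^*)_{\omega\in\Omega}$ (whose biconjugates integrate to $g^*$ by Lemma~\ref{l:1}\ref{l:1iii}) with parameter $1/\gamma$ gives $\Rm{1/\gamma}(\LW,\gw^*)_{\omega\in\Omega}=L\proxc{1/\gamma}g^*$, and taking Legendre conjugates via \eqref{e:ipc} and Definition~\ref{d:1} yields $\Rcm{\gamma}(\LW,\gw)_{\omega\in\Omega}=L\proxcc{\gamma}g$.
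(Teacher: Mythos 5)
Your proposal is correct and follows essentially the same route as the paper: items (i)--(ii) are delegated to the direct-integral operator calculus of \cite{Cana24}, items (iii)--(iv) rest on the conjugation and Moreau-envelope formulas for Hilbert direct integrals of integrands (the paper invokes Theorem~4.7(viii)--(ix) of \cite{Cana24} for exactly the identities $g^*=\leftindex^{\mathfrak{G}}{\int}_{\Omega}^{\oplus}\gw^*\mu(d\omega)$ and the envelope interchange you single out as the technical crux), and (v) is obtained by applying (iv) to $(\gw^*)_{\omega\in\Omega}$ with parameter $1/\gamma$ and conjugating. The only step you leave implicit that the paper spells out is the verification that the family $(\gw^*)_{\omega\in\Omega}$ itself satisfies Assumption~\ref{a:2} before (iv) may be invoked for it---done via $\prox_{\gw^{**}}=\Id_{\GW}-\prox_{\gw^*}$ together with the integrable sandwich $\scal{\cdot}{r^*(\omega)}_{\GW}-\gw^*(r^*(\omega))\leq\gw^{**}\leq\gw$---but this is routine and consistent with your outline.
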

\begin{proof}
\ref{p:60i}: We deduce from \cite[Proposition~3.12(ii)]{Cana24}
and Assumption~\ref{a:3}\ref{a:3d} that $L\in\BL(\HS,\GG)$ and that
$0<\|L\|^2\leq\int_{\Omega}\norm{\LW}^2\mu(d\omega)\leq 1$.

\ref{p:60ii}: See \cite[Proposition~3.12(v)]{Cana24}.

To establish \ref{p:60iii}--\ref{p:60v}, set 
$\vartheta\colon\Omega\to\RR\colon\omega\mapsto
-\gw^{**}(r(\omega))$ and $(\forall\omega\in\Omega)$ $\fw=\gw^*$.
Let us show that $(\fw)_{\omega\in\Omega}$ satisfies the following:
\begin{enumerate}[label={\normalfont[\Alph*]'}]
\item
\label{a:5a}
For every $\omega\in\Omega$, $\fw\in\Gamma_0(\GW)$.
\item
\label{a:5b}
For every $x\in\GH$, the mapping
$\omega\mapsto\prox_{\mathsf{f}_{\omega}}(x(\omega))$ lies
in $\mathfrak{G}$.
\item
\label{a:5c}
The function $\omega\mapsto\mathsf{f}_{\omega}(r^*(\omega))$
lies in $\mathscr{L}^1(\Omega,\FF,\mu;\RR)$.
\item
\label{a:5d}
$\vartheta\in\mathscr{L}^1(\Omega,\FF,\mu;\RR)$ and,
for every $\omega\in\Omega$, 
$\fw\geq\scal{r(\omega)}{\cdot}_{\GW}+\vartheta(\omega)$.
\end{enumerate}
This will confirm that $(\fw)_{\omega\in\Omega}$ satisfies the
properties of \cite[Assumption~4.6]{Cana24}. First, it follows from
items \ref{a:2a} and \ref{a:2b} in Assumption~\ref{a:2} and from
Lemma~\ref{l:1}\ref{l:1iv} that \ref{a:5a} holds. Second,
Assumption~\ref{a:2}\ref{a:2d} implies that \ref{a:5b} holds, while
Assumption~\ref{a:2}\ref{a:2c} implies that \ref{a:5c} holds. Let
us now show that
$\vartheta\in\mathscr{L}^1(\Omega,\FF,\mu;\RR)$.
As in the proof of \cite[Theorem~4.7(ix)]{Cana24},
$-\vartheta$ is $\FF$-measurable. Further, by \eqref{e:jjm2}
and Lemma~\ref{l:1}\ref{l:1i},
\begin{equation}
(\forall\omega\in\Omega)\;\;
\scal{\cdot}{r^*(\omega)}_{\GW}-\gw^*\brk1{r^*(\omega)}\leq
\gw^{**}\leq \gw.
\end{equation}
Thus, we infer from Assumption~\ref{a:2}\ref{a:2b}--\ref{a:2c}
that $\gw^{**}$ is bounded by integrable functions, which
shows that 
\begin{equation}
\label{e:jo1}
\vartheta\in\mathscr{L}^1(\Omega,\FF,\mu;\RR).
\end{equation}
On the other hand, it follows from Lemma~\ref{l:1}\ref{l:1iii}
and \eqref{e:jjm2} that, for every $\omega\in\Omega$,
$\fw=\gw^{***}\geq\scal{r(\omega)}{\cdot}_{\GW}-\gw^{**}
(r(\omega))=\scal{r(\omega)}{\cdot}_{\GW}+\vartheta(\omega)$, which
provides \ref{a:5d}. Therefore $(\fw)_{\omega\in\Omega}$ satisfies
the conclusions of \cite[Theorem~4.7]{Cana24}. In particular,
\cite[Theorem~4.7(i)--(ii)]{Cana24} entail that
\begin{equation}
\label{e:60c}
f=\leftindex^{\mathfrak{G}}{\int}_{\Omega}^{\oplus}
\fw\mu(d\omega)
\end{equation}
is a well-defined function in $\Gamma_0(\GG)$ and from 
\cite[Theorem~4.7(ix)]{Cana24} and Lemma~\ref{l:8}\ref{l:8i-} that 
\begin{equation}
\label{e:jo2}
g=f^*\in\Gamma_0(\GG).
\end{equation}

\ref{p:60iii}: See \eqref{e:jo2}.

\ref{p:60iv}: 
By \cite[Theorem~4.7(viii)]{Cana24}, 
\begin{equation}
\label{e:60d}
\moyo{f}{\frac{1}{\gamma}}
=\leftindex^{\mathfrak{G}}{\int}_{\Omega}^{\oplus}
\moyo{\fw}{\frac{1}{\gamma}}\mu(d\omega).
\end{equation}
Further, by \ref{p:60iii} and Lemma~\ref{l:8}\ref{l:8i-}, $g^*=f$.
In turn, \eqref{e:60b} and \eqref{e:60d} imply that
\begin{equation}
\label{e:60e}
\moyo{\brk1{g^*}}{\frac{1}{\gamma}}\circ L\colon\HS\to\RR\colon
\mathsf{x}\mapsto
\int_{\Omega}\moyo{\brk1{\gw^*}}{\frac{1}{\gamma}}
\brk1{\LW\mathsf{x}}\mu(d\omega).
\end{equation}
In view of Definitions~\ref{d:1} and \ref{d:ipm}, the
assertion is proved.

\ref{p:60v}:
Let us show that $(\fw)_{\omega\in\Omega}$ fulfills the properties
of Assumption~\ref{a:2} by showing that the following hold:
\begin{enumerate}[label={\normalfont[\Alph*]''}]
\item
\label{a:6a}
For every $\omega\in\Omega$, $\fw\colon\GW\to\RX$ satisfies
$\cam\fw\neq\emp$.
\item
\label{a:6b}
For every $x^*\in\GH$, the mapping
$\omega\mapsto\prox_{\fw^*}x^*(\omega)$ lies in $\mathfrak{G}$.
\item
\label{a:6c}
The function $\omega\mapsto\fw(r^*(\omega))$ lies in
$\mathscr{L}^1(\Omega,\FF,\mu;\RR)$.
\item
\label{a:6d}
The function $\omega\mapsto\fw^*(r(\omega))$ lies in
$\mathscr{L}^1(\Omega,\FF,\mu;\RR)$.
\end{enumerate}
We first note that \ref{a:5a} and Lemma~\ref{l:8}\ref{l:8i--} imply
that \ref{a:6a} holds, and that
\ref{a:5c}$\Leftrightarrow$\ref{a:6c}. Additionally, it follows
from \eqref{e:jo1} that \ref{a:6d} holds. It remains to establish
\ref{a:6b}. Assumption~\ref{a:2}\ref{a:2d} asserts that, for every
$x^*\in\mathfrak{H}$, the mapping
$\omega\mapsto\prox_{\fw}x^*(\omega)$ lies in $\mathfrak{G}$.
Therefore, the inclusion $\mathfrak{H}\subset\mathfrak{G}$, 
Lemma~\ref{l:8}\ref{l:8ii+}, and the fact the
$\mathfrak{G}$ is a vector space imply that, for every
$x^*\in\mathfrak{H}$, the mapping
$\omega\mapsto\prox_{\fw^*}x^*(\omega)=x^*(\omega)-
\prox_{\fw}x^*(\omega)$ lies in $\mathfrak{G}$, which provides
\ref{a:6b}. Hence, we combine Definition~\ref{d:ipm}, the
application of \ref{p:60iv} to $(\fw)_{\omega\in\Omega}$,
\eqref{e:jo2}, Lemma~\ref{l:8}\ref{l:8i-}, and
Definition~\ref{d:1}, to obtain
\begin{equation}
\Rcm{\gamma}(\LW,\gw)_{\omega\in\Omega}
=\brk2{\Rm{1/\gamma}(\LW,\fw)_{\omega\in\Omega}}^*
=\brk2{L\proxc{1/\gamma}f}^*
=\brk2{L\proxc{1/\gamma}g^*}^*
=L\proxcc{\gamma}g,
\end{equation}
which completes the proof.
\end{proof}

Our main results on integral proximal mixtures are the following.

\begin{theorem}
\label{t:65}
Suppose that Assumptions~\ref{a:3} and \ref{a:2} are in force, and
let $\gamma\in\RPP$. Then the following hold:
\begin{enumerate}
\item
\label{t:65i}
$\Rm{\gamma}(\LW,\gw)_{\omega\in\Omega}\in\Gamma_0(\HS)$.
\item
\label{t:65ii}
$\Rcm{\gamma}(\LW,\gw)_{\omega\in\Omega}\in\Gamma_0(\HS)$.
\item
\label{t:65iii}
$(\Rcm{\gamma}(\LW,\gw)_{\omega\in\Omega})^*
=\Rm{1/\gamma}(\LW,\gw^*)_{\omega\in\Omega}$.
\item
\label{t:65iv}
$\Rm{\gamma}(\LW,\gw)_{\omega\in\Omega}
=(\Rcm{1/\gamma}(\LW,\gw^*)_{\omega\in\Omega})^*$.
\item
\label{t:65v}
Let $\mathsf{x}\in\HS$. Then
$\prox_{\gamma\Rm{\gamma}(\LW,\gw)_{\omega\in\Omega}}\mathsf{x}=
\displaystyle\int_{\Omega}\LW^*\brk{\prox_{\gamma\gw^{**}}
(\LW\mathsf{x})}\,\mu(d\omega)$. 
\item
\label{t:65vi}
Let $\mathsf{x}\in\HS$. Then
$\prox_{\gamma\Rcm{\gamma}(\LW,\gw)_{\omega\in\Omega}}\mathsf{x}=
\mathsf{x}-\displaystyle\int_{\Omega}\LW^*(\LW\mathsf{x}-
\prox_{\gamma\gw^{**}}(\LW\mathsf{x}))\,\mu(d\omega)$.
\item
\label{t:65vii}
Define $g$ as in \eqref{e:60a} and $L$ as in \eqref{e:60b}.
Then the following are satisfied:
\begin{enumerate}
\item
\label{t:65viia}
$\partial(\Rm{\gamma}(\LW,\gw)_{\omega\in\Omega})
=L^*\pushfwd(\partial g+(\Id_{\GG}-L\circ L^*)/\gamma)$.
\item
\label{t:65viib}
$\partial(\Rcm{\gamma}(\LW,\gw)_{\omega\in\Omega})
=L^*\circ(\partial g^*+\gamma(\Id_{\GG}-L\circ L^*))^{-1}\circ L$.
\end{enumerate}
\item
\label{t:65viii}
Let $\mathsf{x}\in\HS$. Then
$\moyo{(\Rcm{\gamma}(\LW,\gw)_{\omega\in\Omega})}{\gamma}
(\mathsf{x})=\displaystyle\int_{\Omega}
\moyo{(\gw^{**})}{\gamma}(\LW\mathsf{x})\,\mu(d\omega)$.
\item
\label{t:65ix}
$\Argmin_{\mathsf{x}\in\HS}(\Rcm{\gamma}
(\LW,\gw)_{\omega\in\Omega})(\mathsf{x})=
\Argmin_{\mathsf{x}\in\HS}\displaystyle\int_{\Omega}
\moyo{(\gw^{**})}{\gamma}(\LW\mathsf{x})\,\mu(d\omega)$.
\item
\label{t:65x}
Let $\mathsf{x}\in\HS$. Then
$(\rec\Rcm{\gamma}(\LW,\gw)_{\omega\in\Omega})(\mathsf{x})
=\displaystyle\int_{\Omega}(\rec(\gw^{**}))(\LW\mathsf{x})\,
\mu(d\omega)$.
\item
\label{t:65xi}
Suppose that $\mu$ is a probability measure and that there exists
$\beta\in\RPP$ such that, for every $\omega\in\Omega$, 
$\gw\colon\GW\to\RR$ is convex and $\beta$-Lipschitzian. Then
$\Rcm{\gamma}(\LW,\gw)_{\omega\in\Omega}$ is $\beta$-Lipschitzian.
\end{enumerate}
\end{theorem}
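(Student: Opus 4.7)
The plan is to use Proposition~\ref{p:60} as a bridge that converts every claim into a statement about proximal compositions and cocompositions associated with the operator $L$ defined in \eqref{e:60b} and the function $g$ defined in \eqref{e:60a}. The crucial structural facts are that $0<\|L\|\leq 1$ (Proposition~\ref{p:60}\ref{p:60i}) and $g\in\Gamma_0(\GG)$ (Proposition~\ref{p:60}\ref{p:60iii}), so $g^{**}=g$ by Lemma~\ref{l:8}\ref{l:8i-}. All the machinery of Section~\ref{sec:3} is then directly applicable.

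Items \ref{t:65i}--\ref{t:65ii} follow from Proposition~\ref{p:7}\ref{p:7i} applied to $L\proxc{\gamma}g$ and $L\proxcc{\gamma}g$. For \ref{t:65iii}--\ref{t:65iv}, I would first note that the verification of properties \mbox{[A]''--[D]''} in the proof of Proposition~\ref{p:60}\ref{p:60v} shows that the family $(\LW,\gw^*)_{\omega\in\Omega}$ also satisfies Assumption~\ref{a:2}, so Proposition~\ref{p:60}\ref{p:60iv} can be reapplied with $\gw$ replaced by $\gw^*$ (note $\gw^{***}=\gw^*$), yielding $\Rm{1/\gamma}(\LW,\gw^*)_{\omega\in\Omega}=L\proxc{1/\gamma}g^*$ after identifying $g^*=\int^\oplus\gw^*\mu(d\omega)$ as in \eqref{e:jo2}. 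Then \ref{t:65iii} and \ref{t:65iv} follow, respectively, from Proposition~\ref{p:7}\ref{p:7ii} and Proposition~\ref{p:7}\ref{p:7iii}.

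For \ref{t:65v}--\ref{t:65vi}, I would invoke Proposition~\ref{p:17}, using $g^{**}=g$, to obtain $\prox_{\gamma(L\proxc{\gamma}g)}=L^*\circ\prox_{\gamma g}\circ L$ and its counterpart. The pointwise formula $(\prox_{\gamma g}x)(\omega)=\prox_{\gamma\gw^{**}}(x(\omega))$ for the proximity operator of a Hilbert direct integral (which follows from the techniques of \cite[Theorem~4.7]{Cana24}), combined with the explicit form of $L$ and $L^*$ in Proposition~\ref{p:60}\ref{p:60i}--\ref{p:60ii}, gives the integral representations. Item \ref{t:65vii} is just Proposition~\ref{p:9}. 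Items \ref{t:65viii} and \ref{t:65x} follow from Proposition~\ref{p:10}\ref{p:10ii} and Proposition~\ref{p:13}, respectively, combined with the analogous pointwise formulas expressing the Moreau envelope and the recession function of a direct integral as direct integrals of the corresponding operations (again from \cite{Cana24}); item \ref{t:65ix} is then an immediate consequence of Corollary~\ref{c:argmin} together with \ref{t:65viii}.

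For \ref{t:65xi}, I would first show that $g=\int^{\oplus}\gw\mu(d\omega)$ is $\beta$-Lipschitzian on $\GG$: for $x,y\in\GG$, a pointwise application of the $\beta$-Lipschitz bound followed by Cauchy--Schwarz and the fact that $\mu(\Omega)=1$ yields $|g(x)-g(y)|\leq\beta\int_{\Omega}\|x(\omega)-y(\omega)\|_{\GW}\mu(d\omega)\leq\beta\|x-y\|_{\GG}$. Corollary~\ref{c:19} then gives that $L\proxcc{\gamma}g$ is $\beta\|L\|$-Lipschitzian, and $\|L\|\leq 1$ finishes the proof. The main obstacle across the list is not any single item but rather the careful bookkeeping required to justify the pointwise direct-integral identities (for prox, Moreau envelope, recession) under Assumption~\ref{a:2}; however, these are variants of results already established in \cite{Cana24} and used implicitly in the proof of Proposition~\ref{p:60}, so no fundamentally new argument is needed beyond what Section~\ref{sec:3} and Proposition~\ref{p:60} already provide.
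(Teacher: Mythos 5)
Your proposal is correct and follows essentially the same route as the paper: reduce everything to $L\proxc{\gamma}g$ and $L\proxcc{\gamma}g$ via Proposition~\ref{p:60}, then invoke the corresponding results of Section~\ref{sec:3} together with the direct-integral identities for $\prox$, Moreau envelopes, and recession functions from \cite{Cana24}. The only cosmetic differences are that you make explicit the identification $\Rm{1/\gamma}(\LW,\gw^*)_{\omega\in\Omega}=L\proxc{1/\gamma}g^*$ needed for \ref{t:65iii}--\ref{t:65iv} (which the paper leaves implicit in its appeal to Proposition~\ref{p:7}), and that for \ref{t:65xi} you bound $|g(x)-g(y)|$ via Cauchy--Schwarz where the paper applies Jensen's inequality to the square --- the two estimates are equivalent.
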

\begin{proof}
Define $L$ as in \eqref{e:60b} and $g$ as in \eqref{e:60a}. Recall
from items \ref{p:60i} and \ref{p:60iii} in Proposition~\ref{p:60}
that $L\in\BL(\HS,\GG)$, $0<\|L\|\leq 1$, and $g\in\Gamma_0(\GG)$.
Additionally, by Proposition~\ref{p:60}\ref{p:60iv}--\ref{p:60v},
\begin{equation}
\label{e:100}
\Rm{\gamma}(\LW,\gw)_{\omega\in\Omega}=L\proxc{\gamma}g
\quad\text{and}\quad
\Rcm{\gamma}(\LW,\gw)_{\omega\in\Omega}=L\proxcc{\gamma}g.
\end{equation}
Also, proceeding as in the proof of Proposition~\ref{p:60},
it can be shown that
\begin{equation}
\label{e:73}
\brk1{\gw^{**}}_{\omega\in\Omega}\;\text{satisfies the properties
of \cite[Assumption~4.6]{Cana24}}. 
\end{equation}
Thus, by \cite[Theorem~4.7(iv)]{Cana24},
\begin{equation}
\label{e:120}
(\forall x\in\GG)\quad
\brk1{\prox_{\gamma g}x}(\omega)=
\prox_{\gamma\gw^{**}}\brk1{x(\omega)}\;\;\text{for $\mu$-almost
every}\;\omega\in\Omega.
\end{equation}

\ref{t:65i}--\ref{t:65iv}: These are consequences of \eqref{e:100}
and Proposition~\ref{p:7}.

\ref{t:65v}: It follows from \eqref{e:100},
Propositions~\ref{p:17}\ref{p:17i} and
\ref{p:60}\ref{p:60ii}, and \eqref{e:120} that
\begin{equation}
\prox_{\gamma\Rm{\gamma}(\LW,\gw)_{\omega\in\Omega}}\mathsf{x}
=L^*\brk1{\prox_{\gamma g}(L\mathsf{x})}
=\int_{\Omega}\LW^*\brk2{\prox_{\gamma\gw^{**}}
\brk1{\LW\mathsf{x}}}\,\mu(d\omega).
\end{equation}

\ref{t:65vi}: It follows from \eqref{e:100}, 
Propositions~\ref{p:17}\ref{p:17ii} and \ref{p:60}\ref{p:60ii}, and
\eqref{e:120} that
\begin{align}
\prox_{\gamma\Rcm{\gamma}(\LW,\gw)_{\omega\in\Omega}}\mathsf{x}
&=\mathsf{x}-L^*\brk1{L\mathsf{x}-\prox_{\gamma g}(L\mathsf{x})}
\nonumber\\
&=\mathsf{x}-\int_{\Omega}\LW^*\brk2{\LW\mathsf{x}
-\prox_{\gamma\gw^{**}}\brk1{\LW\mathsf{x}}}\mu(d\omega).
\end{align}

\ref{t:65vii}: A consequence of \eqref{e:100} and
Proposition~\ref{p:9}.

\ref{t:65viii}: By \eqref{e:73} and
\cite[Theorem~4.7(viii)]{Cana24},
\begin{equation}
\label{e:415}
\moyo{g}{\gamma}=\leftindex^{\mathfrak{G}}{\int}_{\Omega}^{\oplus}
\moyo{\brk1{\gw^{**}}}{\gamma}\mu(d\omega).
\end{equation}
However, by Lemma~\ref{l:8}\ref{l:8i-}, $g=g^{**}$. Therefore,
\eqref{e:100}, Proposition~\ref{p:10}\ref{p:10ii} and \eqref{e:415}
yield
\begin{equation}
\moyo{\brk2{\Rcm{\gamma}(\LW,\gw)_{\omega\in\Omega}}}{\gamma}
(\mathsf{x})=\moyo{\brk1{L\proxcc{\gamma}g}}{\gamma}
(\mathsf{x})=\moyo{g}{\gamma}(L\mathsf{x})=
\int_{\Omega}\moyo{\brk1{\gw^{**}}}{\gamma}(\LW\mathsf{x})\,
\mu(d\omega).
\end{equation}

\ref{t:65ix}: The assertion is obtained by using successively
\eqref{e:100}, Corollary~\ref{c:argmin}, and \ref{t:65viii}.

\ref{t:65x}: By \eqref{e:73} and \cite[Theorem~4.7(x)]{Cana24},
\begin{align}
\label{e:416}
\rec g=\leftindex^{\mathfrak{G}}{\int}_{\Omega}^{\oplus}
\rec\brk1{\gw^{**}}\mu(d\omega)
\end{align}
However, by Lemma~\ref{l:8}\ref{l:8i-}, $g=g^{**}$. Hence, it
results from \eqref{e:100}, Proposition~\ref{p:13}, and
\eqref{e:416} that
\begin{equation}
\brk2{\rec\Rcm{\gamma}(\LW,\gw)_{\omega\in\Omega}}(\mathsf{x})
=\brk2{\rec\brk1{L\proxcc{\gamma}g}}(\mathsf{x})
=\brk1{\rec g}(L\mathsf{x})=
\int_{\Omega}\brk1{\rec(\gw^{**})}(\LW\mathsf{x})\,\mu(d \omega).
\end{equation}

\ref{t:65xi}: 
It follows from \eqref{e:60a}, Lemma~\ref{l:8}\ref{l:8i-}, and
Jensen's inequality (\cite[Proposition~9.24]{Livre1}) that
\begin{align}
(\forall x\in\GG)(\forall y\in\GG)\quad
\lvert g(x)-g(y)\rvert^2
&=\biggl\lvert\int_{\Omega}\brk2{\gw\brk1{x(\omega)}
-\gw\brk1{y(\omega)}}\mu(d\omega)\biggr\rvert^2\nonumber\\
&\leq\int_{\Omega}\bigl\lvert\gw\brk1{x(\omega)}
-\gw\brk1{y(\omega)}\bigr\rvert^2\,\mu(d\omega)\nonumber\\
&\leq\beta^2\int_{\Omega}\norm{x(\omega)-y(\omega)}_{\GW}^2
\mu(d\omega)\nonumber\\
&=\beta^2\norm{x-y}_{\GG}^2.
\end{align}
Therefore, $g$ is $\beta$-Lipschitzian, and the conclusion follows
from \eqref{e:100} and Corollary~\ref{c:19}.
\end{proof}

Our second batch of results focuses on approximation properties.

\begin{theorem}
\label{t:70}
Suppose that Assumptions~\ref{a:3} and \ref{a:2} are in force.
For every $\mathsf{x}\in\HS$, define
\begin{equation}
\brk2{\pcm(\LW,\gw)_{\omega\in\Omega}}(\mathsf{x})=
\inf\Menge3{\int_{\Omega}\gw^{**}(x(\omega))
\mu(d\omega)}{x\in\GG\,\,\text{and}\,\,
\int_{\Omega}\LW^*\brk{x(\omega)}\mu(d\omega)=\mathsf{x}}
\end{equation}
and write $(\pcm(\LW,\gw)_{\omega\in\Omega})(\mathsf{x})=
(\epcm(\LW,\gw)_{\omega\in\omega})(\mathsf{x})$ if the infimum is
attained. Then the following hold:
\begin{enumerate}
\item
\label{t:70i}
Let $\gamma\in\RPP$. Then 
$\Rm{\gamma}(\LW,\gw)_{\omega\in\Omega}\geq
\pcm(\LW,\gw)_{\omega\in\Omega}$.
\item
\label{t:70ii}
Let $\gamma\in\RPP$ and $\mathsf{x}\in\HS$. Then 
\begin{equation}
\displaystyle\int_{\Omega}\moyo{(\gw^{**})}
{\gamma}(\LW\mathsf{x})\,\mu(d\omega)
\leq\brk2{\Rcm{\gamma}(\LW,\gw)_{\omega\in\Omega}}(\mathsf{x})\leq
\displaystyle\int_{\Omega}\gw^{**}(\LW\mathsf{x})\,\mu(d\omega).
\end{equation}
\item
\label{t:70iii}
Let $\gamma\in\RPP$. Then 
$\Rcm{\gamma}(\LW,\gw)_{\omega\in\Omega}\leq
\Rm{\gamma}(\LW,\gw)_{\omega\in\Omega}$.
\item
\label{t:70iv}
Let $\gamma\in\RPP$ and suppose that $\mu$ is a probability measure
and that, for every $\omega\in\Omega$, $\LW$ is an isometry. Then 
$\Rm{\gamma}(\LW,\gw)_{\omega\in\Omega}
=\Rcm{\gamma}(\LW,\gw)_{\omega\in\Omega}$.
\item
Let $\gamma\in\RPP$ and suppose that $L$ in \eqref{e:60b} is a
coisometry. Then the following are satisfied:
\begin{enumerate}
\item
\label{t:70va}
$\Rm{\gamma}(\LW,\gw)_{\omega\in\Omega}
=\epcm(\LW,\gw)_{\omega\in\Omega}$.
\item
\label{t:70vb}
Let $\mathsf{x}\in\HS$. Then
$(\Rcm{\gamma}(\LW,\gw)_{\omega\in\Omega})(\mathsf{x})=
\displaystyle\int_{\Omega}\gw^{**}(\LW\mathsf{x})\,\mu(d\omega)$.
\end{enumerate}
\item
\label{t:70vi}
Let $\mathsf{x}\in\HS$. Then the following are satisfied:
\begin{enumerate}
\item
\label{t:70via}
$\displaystyle\lim_{\gamma\to\pinf}
(\Rm{\gamma}(\LW,\gw)_{\omega\in\Omega})(\mathsf{x})
=(\pcm(\LW,\gw)_{\omega\in\Omega})(\mathsf{x})$.
\item
\label{t:70vib}
$\displaystyle\lim_{0<\gamma\to 0}
(\Rcm{\gamma}(\LW,\gw)_{\omega\in\Omega})(\mathsf{x})
=\int_{\Omega}\gw^{**}(\LW\mathsf{x})\,\mu(d\omega)$.
\end{enumerate}
\item
\label{t:70vii}
Suppose that $\HS$ and $\GG$ are finite-dimensional, and let
$(\gamma_n)_{n\in\NN}$ be a sequence in $\RPP$.
Then the following are satisfied:
\begin{enumerate}
\item
\label{t:70viia}
Suppose that $\gamma_n\uparrow\pinf$. Then
$\Rm{\gamma_n}(\LW,\gw)_{\omega\in\Omega}\xrightarrow{e}
\brk2{\pcm(\LW,\gw)_{\omega\in\Omega}}^{{\mbox{\raisebox{-1mm}
{\large$\breve{}$}}}}$.
\item
\label{t:70viib}
Suppose that $\gamma_n\downarrow 0$. Then
$\Rcm{\gamma_n}(\LW,\gw)_{\omega\in\Omega}\xrightarrow{e}\fs,
\;\text{where}\;(\forall\mathsf{x}\in\HS)\;\fs(\mathsf{x})=
\displaystyle\int_{\Omega}\gw^{**}(\LW\mathsf{x})\,\mu(d\omega)$.
\item
\label{t:70viic}
Suppose that $\gamma_n\downarrow 0$ and that the function
$\mathsf{x}\mapsto\displaystyle\int_{\Omega}
\gw^{**}(\LW\mathsf{x})\mu(d\omega)$ is proper and coercive. Then
$\inf_{\mathsf{x}\in\HS}(\Rcm{\gamma_n}(\LW,\gw)_{\omega\in\Omega})
(\mathsf{x})\to\min_{\mathsf{x}\in\HS}\displaystyle\int_{\Omega}
\gw^{**}(\LW\mathsf{x})\,\mu(d\omega)$.
\end{enumerate}
\end{enumerate}
\end{theorem}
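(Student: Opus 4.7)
The plan is to translate each claim into a statement about $L\proxc{\gamma}g$ and $L\proxcc{\gamma}g$ via Proposition~\ref{p:60}, with $L$ and $g$ defined as in \eqref{e:60b}--\eqref{e:60a}. By Proposition~\ref{p:60}, $L\in\BL(\HS,\GG)$, $0<\|L\|\leq 1$, $g\in\Gamma_0(\GG)$ (so $g^{**}=g$), $\Rm{\gamma}(\LW,\gw)_{\omega\in\Omega}=L\proxc{\gamma}g$, and $\Rcm{\gamma}(\LW,\gw)_{\omega\in\Omega}=L\proxcc{\gamma}g$. Every item of the theorem then reduces to an application of a result from Section~\ref{sec:3}, once three ``dictionary'' identities have been established.

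The preparatory step is to identify the three building blocks. First, using the explicit form of $L^*$ in Proposition~\ref{p:60}\ref{p:60ii}, for every $\mathsf{x}\in\HS$ one has $(L^*\pushfwd g)(\mathsf{x})=\pcm(\LW,\gw)_{\omega\in\Omega}(\mathsf{x})$, with equality $L^*\pushfwd g=L^*\epushfwd g$ precisely when the infimum is attained, i.e., when one may replace $\pcm$ by $\epcm$. Second, by definition of the Hilbert direct integral, $(g\circ L)(\mathsf{x})=\int_\Omega\gw^{**}(\LW\mathsf{x})\mu(d\omega)$. Third, by \cite[Theorem~4.7(viii)]{Cana24} (already used in the proof of Theorem~\ref{t:65}\ref{t:65viii}), $\moyo{g}{\gamma}=\leftindex^{\mathfrak{G}}{\int}_{\Omega}^{\oplus}\moyo{(\gw^{**})}{\gamma}\mu(d\omega)$, so that $(\moyo{g}{\gamma}\circ L)(\mathsf{x})=\int_\Omega\moyo{(\gw^{**})}{\gamma}(\LW\mathsf{x})\mu(d\omega)$.

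With these identifications, the items distribute as follows. Item~\ref{t:70i} is Proposition~\ref{p:20}\ref{p:20i}; item~\ref{t:70ii} is Proposition~\ref{p:20}\ref{p:20ii}; item~\ref{t:70iii} is Proposition~\ref{p:20}\ref{p:20iii}. For item~\ref{t:70iv}, I first verify that $L$ is an isometry: when $\mu$ is a probability measure and each $\LW$ is an isometry, $\|L\mathsf{x}\|_{\GG}^2=\int_\Omega\|\LW\mathsf{x}\|_{\GW}^2\mu(d\omega)=\int_\Omega\|\mathsf{x}\|_{\HS}^2\mu(d\omega)=\|\mathsf{x}\|_{\HS}^2$; then Proposition~\ref{p:20}\ref{p:20iv} applies. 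Item~\ref{t:70va}--\ref{t:70vb} follows from Proposition~\ref{p:20}\ref{p:20v} combined with the first dictionary identity (for \ref{t:70va}) and the second (for \ref{t:70vb}). The pointwise asymptotics in \ref{t:70via} and \ref{t:70vib} come directly from Theorem~\ref{t:45}\ref{t:45iii} and Theorem~\ref{t:45}\ref{t:45iv}, respectively, using again $g^{**}=g$. The epi-convergence statements \ref{t:70viia}--\ref{t:70viib} are the translations of Theorem~\ref{t:50}\ref{t:50iia}--\ref{t:50iib}, where the second dictionary identity rewrites the limit $g\circ L$ as the integral $\fs$. Finally, item~\ref{t:70viic} is obtained by invoking Proposition~\ref{p:55} with $f=g\circ L$: the hypothesis that this function is proper and coercive is exactly what Proposition~\ref{p:55} requires.

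The main obstacle, though modest, is the careful bookkeeping in the first dictionary identity: matching the constraint $L^*x=\mathsf{x}$ with the weak integral constraint $\int_\Omega\LW^*x(\omega)\mu(d\omega)=\mathsf{x}$ used in the definition of $\pcm$, and distinguishing when this infimum is attained so as to move between $\pcm$ and $\epcm$. Everything else is a mechanical invocation of the Section~\ref{sec:3} results, together with the integral formulas for $g$, $\moyo{g}{\gamma}$, and $\rec g$ already employed in the proof of Theorem~\ref{t:65}.
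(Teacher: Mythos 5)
Your proposal is correct and follows essentially the same route as the paper: reduce everything to $L\proxc{\gamma}g$ and $L\proxcc{\gamma}g$ via Proposition~\ref{p:60}, establish the dictionary identities $L^*\pushfwd g=\pcm(\LW,\gw)_{\omega\in\Omega}$ and $g(L\mathsf{x})=\int_{\Omega}\gw^{**}(\LW\mathsf{x})\,\mu(d\omega)$ (plus the Moreau-envelope integral formula from \cite[Theorem~4.7(viii)]{Cana24}), and then invoke Propositions~\ref{p:20} and \ref{p:55} and Theorems~\ref{t:45} and \ref{t:50} item by item, including the same verification that $L$ is an isometry in item~\ref{t:70iv}. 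No gaps.
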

\begin{proof}
Define $L$ as in \eqref{e:60b} and $g$ as in \eqref{e:60a}, and
recall from items \ref{p:60i} and \ref{p:60iii} of
Proposition~\ref{p:60} that $L\in\BL(\HS,\GG)$, $0<\|L\|\leq 1$,
and $g\in\Gamma_0(\GG)$. Further, by
Proposition~\ref{p:60}\ref{p:60iv}--\ref{p:60v},
\begin{equation}
\label{e:70c}
\Rm{\gamma}(\LW,\gw)_{\omega\in\Omega}=L\proxc{\gamma}g,
\;\;\text{and}\;\;
\Rcm{\gamma}(\LW,\gw)_{\omega\in\Omega}=L\proxcc{\gamma}g.
\end{equation}
Additionally, Proposition~\ref{p:60}\ref{p:60ii} yields
\begin{equation}
\label{e:70a}
(\forall\mathsf{x}\in\HS)\quad
\brk1{L^*\pushfwd g}(\mathsf{x})=
\inf_{\substack{x\in\GG\\L^*x=\mathsf{x}}}g(x)
=\brk2{\pcm(\LW,\gw)_{\omega\in\Omega}}(\mathsf{x}).
\end{equation}
On the other hand,
\begin{equation}
\label{e:70b}
(\forall\mathsf{x}\in\HS)\quad
g(L\mathsf{x})
=\int_{\Omega}\gw^{**}\brk1{\brk{\mathfrak{e}_{\LS}\mathsf{x}}
(\omega)}\mu(d\omega)
=\int_{\Omega}\gw^{**}\brk1{\LW\mathsf{x}}\mu(d\omega).
\end{equation}

\ref{t:70i}: 
The assertion follows from \eqref{e:70c}, \eqref{e:70a}, and
Proposition~\ref{p:20}\ref{p:20i}.

\ref{t:70ii}: Combine \eqref{e:70c}, \eqref{e:70b}, and
Proposition~\ref{p:20}\ref{p:20ii}.

\ref{t:70iii}: This is a consequence of \eqref{e:70c} and
Proposition~\ref{p:20}\ref{p:20iii}.

\ref{t:70iv}: We have
\begin{equation}
(\forall\mathsf{x}\in\HS)\quad
\norm{L\mathsf{x}}_{\GG}^2
=\int_{\Omega}\norm{\LW\mathsf{x}}_{\GW}^2\mu(d\omega)
=\int_{\Omega}\norm{\mathsf{x}}_{\HS}^2\mu(d\omega)
=\mu(\Omega)\norm{\mathsf{x}}_{\HS}^2
=\norm{\mathsf{x}}_{\HS}^2.
\end{equation}
Therefore, $L$ is an isometry and the assertion follows from
\eqref{e:70c} and Proposition~\ref{p:20}\ref{p:20iv}.

\ref{t:70va}: This follows from \eqref{e:70c}, \eqref{e:70a}, and 
Proposition~\ref{p:20}\ref{p:20v}.

\ref{t:70vb}: This follows from \eqref{e:70c}, \eqref{e:70b}, and
Proposition~\ref{p:20}\ref{p:20v}.

\ref{t:70via}: This follows from \eqref{e:70c}, \eqref{e:70a}, and
Theorem~\ref{t:45}\ref{t:45iii}.

\ref{t:70vib}: This follows from \eqref{e:70c}, \eqref{e:70b}, and
Theorem~\ref{t:45}\ref{t:45iv}.

\ref{t:70viia}: This follows from \eqref{e:70c},
\eqref{e:70a}, and Theorem~\ref{t:50}\ref{t:50iia}.

\ref{t:70viib}: This follows from \eqref{e:70c}, 
\eqref{e:70b}, and Theorem~\ref{t:50}\ref{t:50iib}.

\ref{t:70viic}: This follows from \eqref{e:70c}, 
\eqref{e:70b}, and Proposition~\ref{p:55}\ref{p:55i}.
\end{proof}

\begin{example}
\label{ex:12}
Let $p\in\NN\smallsetminus\{0\}$,
let $(\alpha_k)_{1\leq k\leq p}$ be a family in $\RPP$, let $\HS$
and $(\GK)_{1\leq k\leq p}$ be separable real Hilbert spaces,
let $\mathfrak{G}=\GS_1\times\cdots\times\GS_p$ be the usual
Cartesian product vector space, with generic element
$x=(\mathsf{x}_k)_{1\leq k\leq p}$, and, for every
$k\in\{1,\ldots,p\}$, let $\LS_k\in\BL(\HS,\GS_k)$ and let
$\gs_k\in\Gamma_0(\GS_k)$. Suppose that
$0<\sum_{k=1}^p\alpha_k\|\LS_k\|^2\leq 1$ and set
\begin{equation}
\label{e:ex12}
\Omega=\set{1,\ldots,p},\quad
\FF=2^{\set{1,\ldots,p}},\quad\text{and}\quad
\brk1{\forall k\in\set{1,\ldots,p}}\;\;
\mu\brk1{\set{k}}=\alpha_k.
\end{equation}
Then $((\GS_k)_{1\leq k\leq p},\mathfrak{G})$ is an
$\FF$-measurable vector field of real Hilbert spaces and the space
$\leftindex^{\mathfrak{G}}{\int}_{\Omega}^{\oplus}\GW\mu(d\omega)$
is the weighted Hilbert direct sum of $(\GS_k)_{1\leq k\leq p}$,
namely the Hilbert space obtained by equipping $\mathfrak{G}$ with
the scalar product $(x,y)\mapsto
\sum_{k=1}^p\alpha_k\scal{\mathsf{x}_k}{\mathsf{y}_k}_{\GS_k}$. 
Further, $\int_{\Omega}\|\LW\|^2\mu(d\omega)
=\sum_{k=1}^p\alpha_k\|\LS_k\|^2\in\rzeroun$. Therefore,
Assumptions~\ref{a:3} and \ref{a:2} are
satisfied, and \eqref{e:ipm} becomes a parametrized version of
the \emph{proximal mixture} of \cite[Example~5.9]{Svva23}, namely,
\begin{equation}
\label{e:rmi}
\Rm{\gamma}(\LS_k,\gs_k)_{1\leq k\leq p}=
\brk3{\sum_{k=1}^p\alpha_k\moyo{\brk1{\gs_k^*}}{\frac{1}{\gamma}}
\circ\LS_k}^*-\dfrac{1}{\gamma}\qq_{\HS},
\end{equation}
while \eqref{e:ipc} becomes a parametrized version of the 
\emph{proximal comixture} 
\begin{equation}
\label{e:rcm}
\Rcm{\gamma}(\LS_k,\gs_k)_{1\leq k\leq p}=
\brk4{\brk3{\sum_{k=1}^p\alpha_k\moyo{\brk1{\gs_k^{**}}}{\gamma}
\circ\LS_k}^*-\gamma\qq_{\HS}}^*.
\end{equation}
In particular, for every $\mathsf{x}\in\HS$, we derive from 
Theorem~\ref{t:70}\ref{t:70vi} the following new
facts:
\begin{enumerate}
\item
\label{ex:12i}
$\displaystyle\lim_{\gamma\to\pinf}
\brk2{\Rm{\gamma}(\LS_k,\gs_k)_{1\leq k\leq p}}(\mathsf{x})
=\brk2{\pcm(\LS_k,\gs_k)_{1\leq k\leq p}}(\mathsf{x})
=\inf_{\substack{\mathsf{y}_1\in\GS_1,\ldots,\mathsf{y}_p\in\GS_p\\
\sum_{k=1}^p\alpha_k\LS_k^*\mathsf{y}_k=\mathsf{x}}}
\brk3{\sum_{k=1}^p\alpha_k\gs_k^{**}(\mathsf{y}_k)}$.
\item
\label{ex:12ii}
$\displaystyle{\lim_{0<\gamma\to 0}
\brk2{\Rcm{\gamma}(\LS_k,\gs_k)_{1\leq k\leq p}}(\mathsf{x})
=\sum_{k=1}^p\alpha_k\gs_k^{**}(\LS_k\mathsf{x})}$.
\end{enumerate}
\end{example}

\begin{example}
\label{ex:13}
In the context of Example~\ref{ex:12}, suppose that $\HS$ is 
finite-dimensional and that, for every $k\in\{1,\ldots,p\}$,
$\GS_k$ is finite-dimensional and $\gs_k\in\Gamma_0(\GS_k)$.
Let $(\gamma_n)_{n\in\NN}$ be a sequence in $\RPP$. Then we obtain
the following new results on proximal mixtures and comixtures.
\begin{enumerate}
\item
\label{ex:13i}
Suppose that $\gamma_n\uparrow\pinf$. Then
Theorem~\ref{t:70}\ref{t:70viia} implies that
\begin{equation}
\Rm{\gamma_n}(\LS_k,\gs_k)_{1\leq k\leq p}\xrightarrow{e}
\brk2{\pcm(\LS_k,\gs_k)_{1\leq k\leq p}}^{{\mbox{\raisebox{-1mm}
{\large$\breve{}$}}}}.
\end{equation}
\item
\label{ex:13ii}
Suppose that $\gamma_n\downarrow 0$. Then
Theorem~\ref{t:70}\ref{t:70viib} implies that
$\Rcm{\gamma_n}(\LS_k,\gs_k)_{1\leq k\leq p}\xrightarrow{e}
\sum_{k=1}^p\alpha_k\gs_k\circ\LS_k$.
\item
\label{ex:13iii}
Suppose that $\gamma_n\downarrow 0$ and that the function
$\sum_{k=1}^p\alpha_k\gs_k\circ\LS_k$ is proper and coercive. Then 
Theorem~\ref{t:70}\ref{t:70viic} implies that
\begin{equation}
\inf_{\mathsf{x}\in\HS}\brk2{\Rcm{\gamma_n}
(\LS_k,\gs_k)_{1\leq k\leq p}}(\mathsf{x})\to
\min_{\mathsf{x}\in\HS}\sum_{k=1}^p\alpha_k\gs_k(\LS_k\mathsf{x}).
\end{equation}
\end{enumerate}
\end{example}

\begin{remark}
\label{r:33}
In connection with Example~\ref{ex:13}, it was empirically argued 
in \cite{Eusi24} (see also \cite{Huzz22,Liti20,Shen17,Yuyl13} for
the special cases of proximal averages) that, in variational
formulations arising in image recovery and machine learning,
combining linear operators $(\LS_k)_{1\leq k\leq p}$ and convex
functions $(\mathsf{g}_k)_{1\leq k\leq p}$ by means of the proximal
comixture \eqref{e:rcm} instead of the standard averaging operation
$\sum_{k=1}^p\alpha_k\mathsf{g}_k\circ\LS_k$ had modeling and
numerical advantages. For instance, the proximity of the former is
intractable in general \cite{Jmaa11}, while that of the latter is
explicitly given by Theorem~\ref{t:65}\ref{t:65vi} to be
$\IdHS-\sum_{k=1}^p\alpha_k\brk{\LS_k^*\circ
(\mathsf{Id}_{\GS_k}-\prox_{\gamma\mathsf{g}_k})\circ\LS_k}$, which
makes the implementation of first-order optimization algorithms
\cite{Acnu24} straightforward. The results of Example~\ref{ex:13}
provide a theoretical context that sheds more light on such an
approximation.
\end{remark}

\subsection{Proximal expectations}
\label{sec:43}

We specialize the results of Section~\ref{sec:42} to the proximal
expectation. This operation, introduced in
\cite[Definition~4.6]{Jota24} as an extension of the proximal
average for finite families, performs a nonlinear averaging of
an arbitrary family of functions. We study here the following 
extension of it which incorporates a parameter.

\begin{definition}
\label{d:201}
Let $(\Omega,\FF,\PP)$ be a complete probability space, let $\HS$
be a separable real Hilbert space, let $(\fw)_{\omega\in\Omega}$ be
a family of functions in $\Gamma_0(\HS)$ such that the function
\begin{equation}
\Omega\times\HS\to\RX\colon(\omega,\mathsf{x})\mapsto
\fw(\mathsf{x})
\end{equation}
is $\FF\otimes\BE_{\HS}$-measurable. Suppose that there exist
vectors $r\in\mathscr{L}^2\brk{\Omega,\FF,\PP;\HS}$ and
$r^*\in\mathscr{L}^2\brk{\Omega,\FF,\PP;\HS}$ such that the
functions $\omega\mapsto\fw(r(\omega))$ and
$\omega\mapsto\fw^*(r^*(\omega))$ belong to
$\mathscr{L}^1(\Omega,\FF,\PP;\RR)$. The \emph{proximal
expectation} of 
$(\fw)_{\omega\in\Omega}$ with parameter $\gamma\in\RPP$ is
\begin{equation}
\label{e:fr2}
\PE_{\gamma}(\fw)_{\omega\in\Omega}
=\mathsf{h}^*-\dfrac{1}{\gamma}\qq_{\HS},
\quad\text{where}\quad(\forall\mathsf{x}\in\HS)\quad
\mathsf{h}(\mathsf{x})=\int_{\Omega}
\moyo{\brk1{\fw^*}}{\frac{1}{\gamma}}
(\mathsf{x})\PP(d\omega).
\end{equation}
\end{definition}

An inspection of Definition~\ref{d:ipm} suggests that the proximal
expectation can be viewed as the instance of the integral proximal
mixture in which $(\forall\omega\in\Omega)$ $\GW=\HS$ and
$\LW=\IdHS$. This fact opens the possibility of specializing the 
results of Section~\ref{sec:42} to obtain properties of the
proximal expectation. Let us formalize these ideas.

\begin{proposition}
\label{p:79}
Consider the setting of Definition~\ref{d:201} and let
$\gamma\in\RPP$. Then the following hold:
\begin{enumerate}
\item
\label{p:79i}
$\PE_{\gamma}\brk{\fw}_{\omega\in\Omega}
=\Rm{\gamma}\brk1{\IdHS,\fw}_{\omega\in\Omega}
=\Rcm{\gamma}\brk1{\IdHS,\fw}_{\omega\in\Omega}$.
\item
\label{p:79ii}
$\PE_{\gamma}\brk{\fw}_{\omega\in\Omega}\in\Gamma_0(\HS)$.
\item
\label{p:79iii}
$(\PE_{\gamma}\brk{\fw}_{\omega\in\Omega})^*
=\PE_{1/\gamma}\brk{\fw^*}_{\omega\in\Omega}$.
\item
\label{p:79iv}
Let $\mathsf{x}\in\HS$. Then
$\prox_{\gamma\PE_{\gamma}(\fw)_{\omega\in\Omega}}\mathsf{x}
=\displaystyle\int_{\Omega}\prox_{\gamma\fw}\mathsf{x}
\,\PP(d\omega)$.
\item
\label{p:79vi}
Let $\mathsf{x}\in\HS$. Then
$\moyo{(\PE_{\gamma}(\fw)_{\omega\in\Omega})}{\gamma}(\mathsf{x})
=\displaystyle\int_{\Omega}\moyo{\fw}{\gamma}(\mathsf{x})
\,\PP(d\omega)$.
\item
\label{p:79vii}
$\Argmin_{\mathsf{x}\in\HS}(\PE_{\gamma}(\fw)_{\omega\in\Omega})
(\mathsf{x})=
\Argmin_{\mathsf{x}\in\HS}\displaystyle\int_{\Omega}
\moyo{\fw}{\gamma}(\mathsf{x})\,\PP(d\omega)$.
\item
\label{p:79viii}
Let $\mathsf{x}\in\HS$. Then
$(\rec\PE_{\gamma}(\fw)_{\omega\in\Omega})(\mathsf{x})
=\displaystyle\int_{\Omega}(\rec\fw)(\mathsf{x})\,\PP(d\omega)$.
\item
\label{p:79ix}
Suppose that there exists $\beta\in\RPP$ such that, for every
$\omega\in\Omega$, $\fw\colon\HS\to\RR$ is $\beta$-Lipschitzian.
Then $\PE_{\gamma}(\fw)_{\omega\in\Omega}$ is $\beta$-Lipschitzian.
\end{enumerate}
\end{proposition}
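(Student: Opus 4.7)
The strategy is to realize the proximal expectation as the instance of the integral proximal mixture of Definition~\ref{d:ipm} in which $\GW=\HS$ and $\LW=\IdHS$ for every $\omega$, after which every assertion will follow by specialization of Theorem~\ref{t:65}. First I verify the hypotheses. Taking the constant field $(\GW)_{\omega\in\Omega}=\HS$ and letting $\mathfrak{G}$ be the vector space of $\FF$-measurable mappings from $\Omega$ to $\HS$ furnishes an $\FF$-measurable vector field of real Hilbert spaces by the separability of $\HS$, so Assumption~\ref{a:1} holds. With $\LW=\IdHS$, items \ref{a:3a}--\ref{a:3c} of Assumption~\ref{a:3} are immediate and \ref{a:3d} reduces to $\PP(\Omega)=1$. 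Assumption~\ref{a:2}\ref{a:2a} follows from $\fw\in\Gamma_0(\HS)$ via Lemma~\ref{l:8}\ref{l:8i--}; items \ref{a:2b} and \ref{a:2c} are among the standing hypotheses of Definition~\ref{d:201}; and \ref{a:2d} results from the joint $\FF\otimes\BE_\HS$-measurability assumption, which implies measurability of $\omega\mapsto\prox_{\fw^*}x^*(\omega)$ for every $x^*\in\GH$.

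With these checks in place, set $L$ as in \eqref{e:60b} and $g$ as in \eqref{e:60a}. Since $\LW=\IdHS$, for every $\mathsf{x}\in\HS$ the element $L\mathsf{x}$ is (the equivalence class of) the constant map $\omega\mapsto\mathsf{x}$, so
\begin{equation}
\|L\mathsf{x}\|_\GG^2
=\int_\Omega\|\mathsf{x}\|_\HS^2\PP(d\omega)
=\|\mathsf{x}\|_\HS^2,
\end{equation}
i.e., $L$ is an isometry. Moreover, since $\fw\in\Gamma_0(\HS)$, Lemma~\ref{l:8}\ref{l:8i-} gives $\fw^{**}=\fw$. Proposition~\ref{p:60}\ref{p:60iv}--\ref{p:60v} therefore yields
\begin{equation}
\Rm{\gamma}\brk1{\IdHS,\fw}_{\omega\in\Omega}=L\proxc{\gamma}g
\quad\text{and}\quad
\Rcm{\gamma}\brk1{\IdHS,\fw}_{\omega\in\Omega}=L\proxcc{\gamma}g,
\end{equation}
while Proposition~\ref{p:20}\ref{p:20iv} gives $L\proxc{\gamma}g=L\proxcc{\gamma}g$ because $L$ is an isometry. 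Comparing \eqref{e:ipm} with \eqref{e:fr2} then identifies this common value with $\PE_\gamma(\fw)_{\omega\in\Omega}$, establishing \ref{p:79i}.

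Assertions \ref{p:79ii}--\ref{p:79ix} are then obtained by applying, respectively, items \ref{t:65i}, \ref{t:65iii}, \ref{t:65v}, \ref{t:65viii}, \ref{t:65ix}, \ref{t:65x}, and \ref{t:65xi} of Theorem~\ref{t:65}, and simplifying each integrand using $\fw^{**}=\fw$, $\LW\mathsf{x}=\mathsf{x}$, and $\LW^*\mathsf{y}=\mathsf{y}$. In every case the integral on the right-hand side collapses to the expression displayed in the statement; for instance, Theorem~\ref{t:65}\ref{t:65v} specializes to
\begin{equation}
\prox_{\gamma\PE_{\gamma}(\fw)_{\omega\in\Omega}}\mathsf{x}
=\int_\Omega\IdHS\brk1{\prox_{\gamma\fw}(\IdHS\mathsf{x})}\PP(d\omega)
=\int_\Omega\prox_{\gamma\fw}\mathsf{x}\,\PP(d\omega),
\end{equation}
which is \ref{p:79iv}.

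The only genuine subtlety lies in the dictionary between the abstract data of Assumptions~\ref{a:3} and \ref{a:2} and the concrete hypotheses of Definition~\ref{d:201}, in particular in extracting \ref{a:2d} from the joint measurability assumption; once this translation is in place, the proposition is a direct consequence of the integral-proximal-mixture machinery developed in Section~\ref{sec:42}.
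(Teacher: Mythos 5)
Your proposal is correct and follows essentially the same route as the paper: identify $\PE_{\gamma}(\fw)_{\omega\in\Omega}$ with the integral proximal mixture for the constant field $\GW=\HS$, $\LW=\IdHS$, observe that the resulting $L$ of \eqref{e:60b} is an isometry over the probability space so that mixture and comixture coincide (the paper packages this step as Theorem~\ref{t:70}\ref{t:70iv}, whose proof is exactly your isometry computation plus Proposition~\ref{p:20}\ref{p:20iv}), and then specialize Theorem~\ref{t:65} item by item. Your verification of Assumptions~\ref{a:1}--\ref{a:2} is slightly more explicit than the paper's, which simply defers to the proof of \cite[Proposition~4.7]{Jota24}, but the substance is identical.
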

\begin{proof}
\ref{p:79i}: As in the proof of \cite[Proposition~4.7]{Jota24},
the family $(\fw)_{\omega\in\Omega}$ fulfills the properties of
Assumption~\ref{a:2}. Therefore, the conclusion follows from
\eqref{e:fr2}, \eqref{e:ipm}, and Theorem~\ref{t:70}\ref{t:70iv}.

\ref{p:79ii}--\ref{p:79ix}: Combine \ref{p:79i} and
Theorem~\ref{t:65}.
\end{proof}

\begin{remark}
Item \ref{p:79iv} in Proposition~\ref{p:79} justifies calling
$\PE_{\gamma}(\fw)_{\omega\in\Omega}$ the proximal expectation of 
$(\fw)_{\omega\in\Omega}$: its proximity operator is the expected
value of the individual ones. 
\end{remark}

\begin{proposition}
\label{p:80}
Consider the setting of Definition~\ref{d:201}. For every
$\mathsf{x}\in\HS$, define
\begin{equation}
\brk2{\pex(\fw)_{\omega\in\Omega}}(\mathsf{x})=
\inf\Menge3{\int_{\Omega}\fw(x(\omega))
\PP(d\omega)}{x\in L^2(\Omega,\FF,\PP;\HS)\,\,\text{and}\,\,
\int_{\Omega}x(\omega)\PP(d\omega)=\mathsf{x}}.
\end{equation}
Then the following hold:
\begin{enumerate}
\item
\label{p:80i}
Let $\gamma\in\RPP$ and $\mathsf{x}\in\HS$. Then
$(\PE_{\gamma}(\fw)_{\omega\in\Omega})(\mathsf{x})\geq
\displaystyle\int_{\Omega}\moyo{\fw}{\gamma}(\mathsf{x})
\,\PP(d\omega)$.
\item
\label{p:80ii}
Let $\gamma\in\RPP$ and $\mathsf{x}\in\HS$. Then
\begin{equation}
\brk2{\pex(\fw)_{\omega\in\Omega}}(\mathsf{x})
\leq\brk2{\PE_{\gamma}(\fw)_{\omega\in\Omega}}(\mathsf{x})
\leq\displaystyle\int_{\Omega}\fw(\mathsf{x})\,\PP(d\omega).
\end{equation}
\item
\label{p:80iii}
Let $\mathsf{x}\in\HS$. Then the following are satisfied:
\begin{enumerate}
\item
\label{p:80iiia}
$\displaystyle\lim_{\gamma\to\pinf}
(\PE_{\gamma}(\fw)_{\omega\in\Omega})(\mathsf{x})
=(\pex(\fw)_{\omega\in\Omega})(\mathsf{x})$.
\item
\label{p:80iiib}
$\displaystyle\lim_{0<\gamma\to 0}
(\PE_{\gamma}(\fw)_{\omega\in\Omega})(\mathsf{x})
=\int_{\Omega}\fw(\mathsf{x})\,\PP(d\omega)$.
\end{enumerate}
\item
\label{p:80iv}
Suppose that $\HS$ and $\GG$ are finite-dimensional, and let
$(\gamma_n)_{n\in\NN}$ be a sequence in $\RPP$.
Then the following are satisfied:
\begin{enumerate}
\item
\label{p:80iva}
Suppose that $\gamma_n\uparrow\pinf$. Then
$\PE_{\gamma_n}(\fw)_{\omega\in\Omega}\xrightarrow{e}
\brk1{\pex(\fw)_{\omega\in\Omega}}^{{\mbox{\raisebox{-1mm}
{\large$\breve{}$}}}}$.
\item
\label{p:80ivb}
Suppose that $\gamma_n\downarrow 0$. Then
$\PE_{\gamma_n}(\fw)_{\omega\in\Omega}\xrightarrow{e}\fs$,
where $\fs\colon\mathsf{x}\mapsto
\displaystyle\int_{\Omega}\fw(\mathsf{x})\,\PP(d\omega)$.
\item
\label{p:80ivc}
Suppose that $\gamma_n\downarrow 0$ and that the function
$\mathsf{x}\mapsto\displaystyle\int_{\Omega}
\fw(\mathsf{x})\PP(d\omega)$ is proper and coercive. Then
$\inf_{\mathsf{x}\in\HS}(\PE_{\gamma_n}(\fw)_{\omega\in\Omega})
(\mathsf{x})\to\min_{\mathsf{x}\in\HS}
\displaystyle\int_{\Omega}\fw(\mathsf{x})\PP(d\omega)$.
\end{enumerate}
\end{enumerate}
\end{proposition}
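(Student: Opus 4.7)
The plan is to reduce each item of Proposition~\ref{p:80} to the corresponding item of Theorem~\ref{t:70} via the identity $\PE_{\gamma}(\fw)_{\omega\in\Omega}=\Rm{\gamma}(\IdHS,\fw)_{\omega\in\Omega}=\Rcm{\gamma}(\IdHS,\fw)_{\omega\in\Omega}$ supplied by Proposition~\ref{p:79}\ref{p:79i}. In this specialization, Assumption~\ref{a:3} is met with $\int_{\Omega}\|\LW\|^{2}\PP(d\omega)=\PP(\Omega)=1$, and Assumption~\ref{a:2} is inherited from the hypotheses on $(\fw)_{\omega\in\Omega}$ exactly as in the proof of Proposition~\ref{p:79}\ref{p:79i}. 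Moreover, since $\GW=\HS$ for every $\omega\in\Omega$, the direct integral $\GG$ of Assumption~\ref{a:1} coincides with $L^{2}(\Omega,\FF,\PP;\HS)$, the operator $L$ of \eqref{e:60b} is the constant embedding $\mathsf{x}\mapsto\mathfrak{e}_{\LS}\mathsf{x}$, and by Proposition~\ref{p:60}\ref{p:60ii} its adjoint is the Bochner integration operator $L^{*}\colon x\mapsto\int_{\Omega}x(\omega)\PP(d\omega)$. Finally, since Lemma~\ref{l:8}\ref{l:8i-} yields $\fw^{**}=\fw$ for every $\omega\in\Omega$, the function $g$ of \eqref{e:60a} satisfies $g(L\mathsf{x})=\int_{\Omega}\fw(\mathsf{x})\PP(d\omega)$.

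With these identifications in place, each item drops out of the matching item of Theorem~\ref{t:70}. For \ref{p:80i}, Theorem~\ref{t:70}\ref{t:70ii} applied with $\LW=\IdHS$ and $\fw^{**}=\fw$ yields the inequality. For \ref{p:80ii}, the rightmost bound is the upper bound of Theorem~\ref{t:70}\ref{t:70ii}, while the leftmost bound follows from Theorem~\ref{t:70}\ref{t:70i} once one observes that the infimum defining $(\pcm(\IdHS,\fw)_{\omega\in\Omega})(\mathsf{x})$ coincides with that defining $(\pex(\fw)_{\omega\in\Omega})(\mathsf{x})$ under the identification $\GG=L^{2}(\Omega,\FF,\PP;\HS)$ and the expression for $L^{*}$ above. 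Items \ref{p:80iiia} and \ref{p:80iiib} then follow from Theorem~\ref{t:70}\ref{t:70via} and Theorem~\ref{t:70}\ref{t:70vib} respectively, and items \ref{p:80iva}, \ref{p:80ivb}, and \ref{p:80ivc} follow from Theorem~\ref{t:70}\ref{t:70viia}, Theorem~\ref{t:70}\ref{t:70viib}, and Theorem~\ref{t:70}\ref{t:70viic} respectively.

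The main obstacle is really only bookkeeping: one must carefully match the constraint sets, verifying that the $L^{2}$-integrability implicit in $\GG$ agrees with the $L^{2}$-integrability imposed in the definition of $\pex$, and consequently that $\pcm(\IdHS,\fw)_{\omega\in\Omega}=\pex(\fw)_{\omega\in\Omega}$. Once these identifications are recorded, the proof of Proposition~\ref{p:80} becomes a direct translation of Theorem~\ref{t:70} with no new analysis required.
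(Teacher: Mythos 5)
Your proof is correct and follows exactly the paper's route: the paper's entire proof reads ``Combine Proposition~\ref{p:79}\ref{p:79i} and Theorem~\ref{t:70},'' and your write-up is simply a more detailed execution of that reduction, including the (correct) identifications $\GG=L^{2}(\Omega,\FF,\PP;\HS)$, $L^{*}=\int_{\Omega}\cdot\,\PP(d\omega)$, and $\pcm(\IdHS,\fw)_{\omega\in\Omega}=\pex(\fw)_{\omega\in\Omega}$.
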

\begin{proof}
Combine Proposition~\ref{p:79}\ref{p:79i} and Theorem~\ref{t:70}.
\end{proof}

\begin{remark}
\label{r:80}
Suppose that $(\fs_k)_{1\leq k\leq p}$ is a finite family of
functions in $\Gamma_0(\HS)$ and define $\PP$ as in
\eqref{e:ex12},
with the additional assumption that $\sum_{k=1}^p\alpha_k=1$. Then 
$\PE(\fs_k)_{1\leq k\leq p}$ is the
\emph{proximal average} of $(\fs_k)_{1\leq k\leq p}$, studied in
\cite{Baus08} (see also \cite[Example~5.9]{Svva23}), namely,
\begin{equation}
\PE_{\gamma}(\fs_k)_{1\leq k\leq p}=\brk3{\sum_{k=1}^p\alpha_k
\moyo{\brk1{\fs_k^*}}{\frac{1}{\gamma}}}^*
-\dfrac{1}{\gamma}\qq_{\HS}=\pav_{\gamma}(\fs_k)_{1\leq k\leq p}.
\end{equation}
In this context, Propositions~\ref{p:79}\ref{p:79i}--\ref{p:79vii}
and \ref{p:80} recover properties presented in \cite{Baus08}. On
the other hand, Proposition~\ref{p:79}\ref{p:79viii}--\ref{p:79ix}
yields the following new properties of the proximal average:
\begin{enumerate}
\item
\label{r:80iii}
$\rec(\pav_{\gamma}(\fs_k)_{1\leq k\leq p})
=\sum_{k=1}^p\alpha_k\rec\fs_k$.
\item
\label{r:80iv}
Suppose that there exists $\beta\in\RPP$ such that, for every
$k\in\{1,\ldots,p\}$, $\fs_k\colon\HS\to\RR$ is
$\beta$-Lipschitzian. Then $\pav_{\gamma}(\fs_k)_{1\leq k\leq p}$
is $\beta$-Lipschitzian.
\end{enumerate}
\end{remark}

We conclude by making a connection between proximal expectations
and integral proximal comixtures that extends
Proposition~\ref{p:79}\ref{p:79i}.

\begin{proposition}
\label{p:75}
Let $(\Omega,\FF,\PP)$ be a complete probability space, suppose
that Assumptions~\ref{a:3} and \ref{a:2} are in force, and let
$\gamma\in\RPP$. Further, for every $\omega\in\Omega$, suppose that
$0<\|\LW\|\leq 1$ and set 
$\fw=\LW\proxcc{\gamma}\gw$. Suppose that the function
\begin{equation}
\Omega\times\HS\to\RX\colon(\omega,\mathsf{x})\mapsto
\fw(\mathsf{x})
\end{equation}
is $\FF\otimes\BE_{\HS}$-measurable and that there exist
$s\in\mathscr{L}^2\brk{\Omega,\FF,\PP;\HS}$ and
$s^*\in\mathscr{L}^2\brk{\Omega,\FF,\PP;\HS}$ such that the
functions
$\omega\mapsto\fw(s(\omega))$ and
$\omega\mapsto\fw^*(s^*(\omega))$ lie in
$\mathscr{L}^1(\Omega,\FF,\PP;\RR)$. Then
\begin{equation}
\label{e:p75a}
\PE_{\gamma}\brk2{\LW\proxcc{\gamma}\gw}_{\omega\in\Omega}
=\Rcm{\gamma}(\LW,\gw)_{\omega\in\Omega}.
\end{equation}
\end{proposition}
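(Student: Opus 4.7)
The plan is to show that both sides of \eqref{e:p75a} lie in $\Gamma_0(\HS)$ and then invoke Lemma~\ref{l:6} after verifying that their Moreau envelopes of index $\gamma$ coincide. This is the most economical route because each side has a clean Moreau-envelope formula already derived earlier in the paper.

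First I would check that both sides are well-defined members of $\Gamma_0(\HS)$. The right-hand side is in $\Gamma_0(\HS)$ by Theorem~\ref{t:65}\ref{t:65ii}. For the left-hand side, since $0<\|\LW\|\le 1$ and $\cam\gw\neq\emp$, Proposition~\ref{p:7}\ref{p:7i} yields $\fw=\LW\proxcc{\gamma}\gw\in\Gamma_0(\HS)$ for every $\omega\in\Omega$. The measurability of $(\omega,\mathsf{x})\mapsto\fw(\mathsf{x})$ and the existence of $s,s^*$ with the stated integrability are part of the hypotheses of Proposition~\ref{p:75}. Hence the family $(\fw)_{\omega\in\Omega}$ satisfies the conditions of Definition~\ref{d:201}, and Proposition~\ref{p:79}\ref{p:79ii} then yields $\PE_\gamma(\fw)_{\omega\in\Omega}\in\Gamma_0(\HS)$.

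Next I would compute the Moreau envelope of each side at an arbitrary $\mathsf{x}\in\HS$. On the left, Proposition~\ref{p:79}\ref{p:79vi} applied to the family $(\fw)_{\omega\in\Omega}$ gives
\begin{equation}
\moyo{\brk1{\PE_\gamma(\fw)_{\omega\in\Omega}}}{\gamma}(\mathsf{x})
=\int_\Omega\moyo{\fw}{\gamma}(\mathsf{x})\,\PP(d\omega),
\end{equation}
and Proposition~\ref{p:10}\ref{p:10ii}, valid for each $\omega$ since $0<\|\LW\|\le 1$ and $\cam\gw\neq\emp$, reduces the integrand to $\moyo{(\gw^{**})}{\gamma}(\LW\mathsf{x})$. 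On the right, Theorem~\ref{t:65}\ref{t:65viii} gives directly
\begin{equation}
\moyo{\brk1{\Rcm{\gamma}(\LW,\gw)_{\omega\in\Omega}}}{\gamma}(\mathsf{x})
=\int_\Omega\moyo{\brk1{\gw^{**}}}{\gamma}(\LW\mathsf{x})\,\PP(d\omega).
\end{equation}
The two envelopes are therefore identical as functions on $\HS$, so Lemma~\ref{l:6} delivers \eqref{e:p75a}.

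The main (and really only) obstacle is the bookkeeping required to justify that $(\LW\proxcc{\gamma}\gw)_{\omega\in\Omega}$ really meets the hypotheses of Definition~\ref{d:201}, so that Proposition~\ref{p:79} is applicable; this is why the statement of Proposition~\ref{p:75} explicitly postulates the measurability of $(\omega,\mathsf{x})\mapsto\fw(\mathsf{x})$ and the integrability conditions on $s$ and $s^*$. Once those ingredients are in place, the Moreau envelope identity of Proposition~\ref{p:10}\ref{p:10ii} is precisely the bridge that matches the two integrand expressions and makes Lemma~\ref{l:6} directly applicable.
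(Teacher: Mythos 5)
Your proposal is correct and follows essentially the same route as the paper: both arguments establish that the two sides lie in $\Gamma_0(\HS)$ (via Proposition~\ref{p:79}\ref{p:79ii} and Theorem~\ref{t:65}\ref{t:65ii}), match their Moreau envelopes of index $\gamma$ using Proposition~\ref{p:79}\ref{p:79vi}, Proposition~\ref{p:10}\ref{p:10ii}, and Theorem~\ref{t:65}\ref{t:65viii}, and conclude with Lemma~\ref{l:6}. The only cosmetic difference is that the paper verifies the hypotheses of Proposition~\ref{p:79} by noting, as in the proof of \cite[Proposition~4.7]{Jota24}, that $(\fw)_{\omega\in\Omega}$ fulfills Assumption~\ref{a:2}, whereas you check the conditions of Definition~\ref{d:201} directly from the stated hypotheses.
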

\begin{proof}
As in the proof of \cite[Proposition~4.7]{Jota24}, the family
$(\fw)_{\omega\in\Omega}$ fulfills the properties of
Assumption~\ref{a:2}. On the other hand,
Proposition~\ref{p:79}\ref{p:79ii} and
Theorem~\ref{t:65}\ref{t:65ii} assert that 
$\PE_{\gamma}(\fw)_{\omega\in\Omega}$ and
$\Rcm{\gamma}(\LW,\gw)_{\omega\in\Omega}$ are in $\Gamma_0(\HS)$.
Further, Propositions~\ref{p:79}\ref{p:79vi} and
\ref{p:10}\ref{p:10ii}, together with
Theorem~\ref{t:65}\ref{t:65viii} yield
\begin{align}
\label{e:p75b}
(\forall\mathsf{x}\in\HS)\quad
\moyo{\brk2{\PE_{\gamma}(\fw)_{\omega\in\Omega}}}{\gamma}
(\mathsf{x})
&=\int_{\Omega}\moyo{\fw}{\gamma}(\mathsf{x})\,
\PP(d\omega)\nonumber\\
&=\int_{\Omega}\moyo{\brk1{\LW\proxcc{\gamma}\gw}}{\gamma}
(\mathsf{x})\,\PP(d\omega)\nonumber\\
&=\int_{\Omega}\moyo{\brk1{\gw^{**}}}{\gamma}(\LW\mathsf{x})\,
\PP(d\omega)\nonumber\\
&=\moyo{\brk2{\Rcm{\gamma}(\LW,\gw)_{\omega\in\Omega}}}{\gamma}
(\mathsf{x}),
\end{align}
and the assertion therefore follows from Lemma~\ref{l:6}.
\end{proof}


\begin{thebibliography}{99}
\setlength{\itemsep}{-1pt}

\bibitem{Atto84} 
H. Attouch,
{\em Variational Convergence for Functions and Operators}.
Pitman, Boston, MA, 1984.

\bibitem{Livre1} 
H. H. Bauschke and P. L. Combettes, 
{\em Convex Analysis and Monotone Operator Theory in Hilbert 
Spaces}, 2nd ed. 
Springer, New York, 2017.

\bibitem{Baus08}
H. H. Bauschke, R. Goebel, Y. Lucet, and X. Wang, 
The proximal average: Basic theory, 
{\em SIAM J. Optim.},
vol. 19, pp. 766--785, 2008.

\bibitem{Beck14}
S. R. Becker and P. L. Combettes,
An algorithm for splitting parallel sums of linearly composed 
monotone operators, with applications to signal recovery,
{\em J. Nonlinear Convex Anal.},
vol. 15, pp. 137--159, 2014.

\bibitem{Bric24}
L. M. Brice\~no-Arias and N. Pustelnik,
Infimal post-composition approach for composite convex optimization
applied to image restoration,
{\em Signal Process.},
vol. 223, art. 109549, 2024.

\bibitem{Cana24}
M. N. B\`ui and P. L. Combettes,
Hilbert direct integrals of monotone operators,
{\em Canad. J. Math.},
to appear.

\bibitem{Jota24}
M. N. B\`ui and P. L. Combettes,
Integral resolvent and proximal mixtures,
{\em J. Optim. Theory Appl.},
vol. 203, pp. 2328--2353, 2024.

\bibitem{Svva18}
P. L. Combettes,
Perspective functions: properties, constructions, and examples,
{\em Set-Valued Var. Anal.},
vol. 26, pp. 247--264, 2018.

\bibitem{Svva23}
P. L. Combettes,
Resolvent and proximal compositions,
{\em Set-Valued Var. Anal.},
vol. 31, art. 22, 29 pp., 2023. 

\bibitem{Acnu24} 
P. L. Combettes,
The geometry of monotone operator splitting methods,
{\em Acta Numer.},
vol. 33, pp. 487--632, 2024.

\bibitem{Eusi24}
P. L. Combettes and D. J. Cornejo,
Signal recovery with proximal comixtures,
{\em Proc. Europ. Signal Process. Conf.},
pp. 2637--2641, 2024.

\bibitem{Jmaa11}
P. L. Combettes, \DJ inh D\~ung, and 
B. C. V\~u,
Proximity for sums of composite functions,
{\em J. Math. Anal. Appl.},
vol. 380, pp. 680--688, 2011.

\bibitem{Dixm69}
J. Dixmier,
{\em Les Alg\`ebres d'Op\'erateurs dans l'Espace Hilbertien
(Alg\`ebres de von Neumann)}, 2e \'ed. 
Gauthier--Villars, Paris, 1969.

\bibitem{Huzz22}
Z. Hu, K. Shaloudegi, G. Zhang, and Y. Yu, 
Federated learning meets multi-objective optimization, 
{\em IEEE Trans. Network Sci. Eng.}, 
vol. 9, pp. 2039--2051, 2022.

\bibitem{Liti20} 
T. Li, A. K. Sahu, M. Zaheer, M. Sanjabi, A. Talwalkar,
and V. Smith,
Federated optimization in heterogeneous networks,
{\em Proc. Machine Learn. Syst.},
vol. 2, pp. 429--450, 2020.

\bibitem{Rock70} 
R. T. Rockafellar,
{\em Convex Analysis.}
Princeton University Press, Princeton, NJ, 1970.

\bibitem{Rock09}
R. T. Rockafellar and R. J. B. Wets, 
{\em Variational Analysis}, 3rd printing.
Springer, New York, 2009. 

\bibitem{Shen17}
L. Shen, W. Liu, J. Huang, Y.-G. Jiang, and S. Ma,
Adaptive proximal average approximation for composite convex
minimization,
{\em Proc. 31st AAAI Conf. Artificial Intell.},
pp. 2513--2519, 2017.

\bibitem{Xuef24}
F. Xue, 
The operator splitting schemes revisited: Primal-dual gap and
degeneracy reduction by a unified analysis,
{\em Optim. Lett.},
vol. 18, pp. 155--194, 2024.

\bibitem{Yuyl13}
Y.-L. Yu,
Better approximation and faster algorithm using the proximal
average, 
{\em Proc. Conf. Adv. Neural Inform. Process. Syst.}, 
pp. 458--466, 2013.

\bibitem{Yama23}
M. Yukawa, H. Kaneko, K. Suzuki, and I. Yamada,
Linearly-involved Moreau-enhanced-over-subspace model: Debiased
sparse modeling and stable outlier-robust regression,
{\em IEEE Trans. Signal Process.},
vol. 71, pp. 1232--1247, 2023.

\end{thebibliography}
\end{document}